\documentclass[11pt,reqno,tbtags]{amsart} 
\usepackage[
    colorlinks=true,%
    breaklinks,
    hyperindex,%
    plainpages=false,%
    bookmarksopen=false,%
    linkcolor=blue,%
    anchorcolor=blue,%
    citecolor=blue,%
    filecolor=black,%
    menucolor=black,%
    urlcolor=blue,%
    pdfview=FitH,
    pdfstartview=FitH,
    hyperfigures,
    bookmarksnumbered%
  ]{hyperref} 
\usepackage[numeric,initials,nobysame]{amsrefs}
\usepackage{amsmath,amssymb,amsthm,amsfonts,amsbsy,latexsym,graphicx,dsfont}
\usepackage{upref,setspace,enumerate,color,bbm,soul} 
\usepackage{pdfsync,multirow}          
\usepackage[portrait,margin=3cm]{geometry}  
\sloppy 

\newcommand{\change}[1]{{\color{black}#1}} 
\newtheorem{thm}{Theorem}[section]
\newtheorem{theorem}{Theorem}[section]
\newtheorem{lem}[thm]{Lemma}
\newtheorem{lemma}[thm]{Lemma}
\newtheorem{cor}[thm]{Corollary}
\newtheorem{corollary}[thm]{Corollary}
\newtheorem{prop}[thm]{Proposition}
\newtheorem{proposition}[thm]{Proposition}
\newtheorem{defn}[thm]{Definition}

\theoremstyle{remark} 
\newtheorem{rem}[thm]{Remark}

\theoremstyle{definition}
\newtheorem{ass}[thm]{Assumption}

\newtheorem{conj}[thm]{Conjecture}
   
\numberwithin{equation}{section}
\numberwithin{figure}{section}
\numberwithin{table}{section}

\newenvironment{enumerateA}{\begin{enumerate}[\upshape A.]}{\end{enumerate}}


\newcommand{\ind}{\mathds{1}}
\newcommand{\eps}{\varepsilon}

\newcommand{\norm}[1]{\left\Vert#1\right\Vert}
\newcommand{\abs}[1]{\left\vert#1\right\vert}

\newcommand{\goesto}{\longrightarrow}
\newcommand{\Real}{\mathds{R}}

\newcommand{\ie}{\emph{i.e.,}}

\newcommand{\equald}{\stackrel{\mathrm{d}}{=}}
\newcommand{\probc}{\stackrel{\mathrm{P}}{\longrightarrow}}
\newcommand{\weakc}{\Longrightarrow}

\def\qed{\hfill$\square$}  
\let\ga=\alpha \let\gb=\beta \let\gc=\gamma \let\gd=\delta 
     \let\gl=\lambda        \let\go=\omega  \let\gr=\rho \let\gs=\sigma  
  
 \let\gD=\Delta


\newcommand{\cF}{\mathcal{F}}
\newcommand{\cI}{\mathcal{I}}

\newcommand{\cP}{\mathcal{P}}

\newcommand{\vzero}{\mathbf{0}}

\newcommand{\vx}{\mathbf{x}}
\newcommand{\vy}{\mathbf{y}}\newcommand{\vz}{\mathbf{z}} 
\newcommand{\mvzero}{\boldsymbol{0}}

\newcommand{\mva}{\boldsymbol{a}}

\newcommand{\mve}{\boldsymbol{e}}

\newcommand{\mvx}{\boldsymbol{x}}\newcommand{\mvy}{\boldsymbol{y}}

\newcommand{\mvgo}{\boldsymbol{\omega}}




\newcommand{\dR}{\mathds{R}}

\newcommand{\dZ}{\mathds{Z}} 

\DeclareMathOperator{\E}{\mathds{E}}
\DeclareMathOperator{\pr}{\mathds{P}}

\DeclareMathOperator{\var}{Var}


\setcounter{tocdepth}{1}
\begin{document}
    
\title[First-passage percolation across thin cylinders]{Central limit theorem for first-passage percolation time across thin cylinders}
\author{Sourav Chatterjee}
\author{Partha S. Dey}

\address{ 251 Mercer Street, Courant Institute of Mathematical Sciences,\hfill\break
\hspace*{4mm}New York University, New York, NY 10012-1185\hfill\break
\hspace*{4mm}Email: {\tt sourav@cims.nyu.EDU}\hfill\break
\hspace*{15.8mm}{\tt partha@cims.nyu.EDU}\hfill\break
\hspace*{4mm}Web: {\tt http://www.cims.nyu.edu/\string~sourav}\hfill\break
\hspace*{13.7mm}{\tt http://www.cims.nyu.edu/\string~partha}}
 
\date{\today}
\subjclass[2000]{Primary: 60F05,60K35;}
\keywords{First-passage percolation, Central Limit Theorem, Cylinder Percolation.}
\thanks{}
   

\begin{abstract}
We prove that first-passage percolation times across thin cylinders of the form $[0,n]\times [-h_n,h_n]^{d-1}$ obey Gaussian central limit theorems as long as $h_n$ grows slower than $n^{1/(d+1)}$. It is an open question as to what is the fastest that $h_n$ can grow so that a Gaussian CLT still holds. Under the natural but unproven assumption about existence of fluctuation and transversal exponents, and strict convexity of the limiting shape in the direction of $(1,0,\ldots,0)$,  we prove that in dimensions $2$ and $3$ the CLT holds all the way up to the height of the unrestricted geodesic. We also  provide some numerical evidence in support of the conjecture in dimension $2$.
\end{abstract}
\maketitle

\tableofcontents
 
\section{Introduction} 
\label{sec:introduction}
Before stating our theorems, let us begin with a short review of the first-passage percolation model and some of the known results.
\subsection{The model}
 More than forty years ago, Hammersley and Welsh~\cite{hw65} introduced first-passage percolation to model the spread of fluid through a randomly porous media. The standard first-passage percolation model on the $d$-dimensional square lattice $\dZ^d$ is defined as follows. Consider the edge set $E$ consisting of nearest neighbor edges, that is, $(\mvx,\mvy)\in\dZ^d\times \dZ^d$ is an edge if and only if $\norm{\mvx-\mvy}:=\sum_{i=1}^d |x_i-y_i|=1$. With each edge (also called a bond) $e\in E$ is associated an independent nonnegative random variable $\go_e$ distributed according to a fixed distribution $F$.  The random variable $\go_e$ represents the amount of time needed to pass through the edge~$e$. 

For a path $\cP$ (which will always be finite and nearest neighbor) in $\dZ^d$ define 
\[
\go(\cP):=\sum_{e\in\cP}\go_e 
\]                          
as the passage time for $\cP$.
For $\mvx,\mvy\in \dZ^d$, let $a(\mvx,\mvy)$, called the \emph{first-passage time}, be the minimum passage time over all paths from $\mvx$ to $\mvy$. Intuitively $a(\mvx,\mvy)$ is the first time the fluid will appear at $\mvy$ if a source of water is introduced at the vertex $\mvx$ at time $0$. Formally
\[
a(\mvx,\mvy):=\inf\{\go(\cP)\mid \cP \text{ is a path connecting } \mvx \text{ to } \mvy \text{ in } \dZ^d\}.
\]
 The principle object of study in first-passage percolation theory is the asymptotic behavior of $a(\mvzero,n\mvx)$ for fixed $\mvx\in\dZ^d$. We refer the reader to  Smythe and Wierman~\cite{sw78} and Kesten~\cite{kes86} for \change{earlier} surveys of the subject.

\subsection{Limit shape}
The first result proved by Hammersley and Welsh \cite{hw65} was that the limit 
\begin{equation}\label{hw}
\nu(\mvx):=\lim_{n\to\infty} \frac{1}{n}\E[a(0,n\mvx)]
\end{equation}
exists and is finite when $\E[\go]<\infty$ where $\go$ is a generic random variable from the distribution $F$. Moreover results of Kesten~\cite{kes86} show that $\nu(\mvx)>0$ if and only if $F(0)<p_c(d)$ where $p_c(d)$ is the  critical probability for standard bernoulli bond percolation in~$\dZ^d$. 

First-passage percolation is often regarded as a stochastic growth model by considering the growth of the random set  
\[
B_t:=\{\mvx\in\dZ^d\mid a(0,\mvx)\le t \}.
\]                                      
When $F(0)=0$, $a(\cdot,\cdot)$ is a random metric on $\dZ^d$ and $B_t$ is the ball of radius $t$ in this metric. Moreover, if $F(0)<p_c(d)$ and  $\E[\go^2]<\infty$ (or under weaker conditions in Cox and Durrett~\cite{cd81}), the growth of $B_t$ is linear in $t$ with a deterministic limit shape, that is, as $t\rightarrow \infty$, $B_t\approx tB_0\cap \dZ^d$ for a nonrandom compact set $B_0$. Precisely, the shape theorem says that (see Richardson~\cite{rson73}, Cox and Durrett \cite{cd81} and Kesten~\cite{kes86}), if $F(0)<p_c(d)$ and $\E[\min\{\go_1^{d},\go_2^d,\ldots,\go_{2d}^d\}]<\infty$ where $\go_1,\ldots,\go_{2d}$ are i.i.d.~from $F$, there is a nonrandom compact set $B_0$ such that for all $\eps>0$
\[
	(1-\eps)B_0\subseteq t^{-1}\tilde{B}_t \subseteq (1+\eps)B_0 \text{ eventually with probability one } 
\]                             
where $\tilde{B}_t=\{\mvy\in\dR^d\mid \exists\ \mvx \in B_{t} \text{ s.t. } \norm{\mvx-\mvy}\le 1\}$ is the ``inflated'' version of $B_t$.

\subsection{Tail bounds and limit theorems}
The next natural question is about the tail behavior and distributional convergence of the random variables $a(\mvzero,n\mvx)$ as $\mvx$ remains fixed and $n \rightarrow \infty$. Kesten~\cite{kes93} used martingale methods to prove that $\pr(|a(\mvzero,n\mve_1)-\E[a(\mvzero,n\mve_1)]|\ge t\sqrt{n})\le c_1e^{-c_2 t}$ for all $t\le c_3 n$ for some constants $c_i>0$, where $\mve_1$ is the unit vector $(1,0,\ldots,0)$. Later, Talagrand~\cite{tala95} used his famous isoperimetric inequality to prove that
\[
  \pr(|a(\mvzero,n\mvx)-M]|\ge t\sqrt{n\norm{\mvx}})\le c_1e^{-c_2 t^2}  
\]  
for all $t\le c_3 n$ for some constants $c_i>0$ where $M$ is a median of $a(\mvzero,n\mvx)$ and $\mvx\in\dZ^d$. Both these results were proved for distributions $F$ having finite exponential moments and satisfying $F(0)<p_c(d)$. 

From these inequalities, one might na\"ively expect that a central limit theorem holds for $a(\mvzero,n\mvx)$. However, the situation is probably much more complex, and it may not be true that a Gaussian CLT holds.  For critical first-passage percolation (assuming $F(0)=1/2$ and $F$ has bounded support) in two dimensions a Gaussian CLT was proved by Newman and Zhang~\cite{nz97}. However, this is sort of a degenerate case since here $\E[a(\mvzero,n\mvx)]$ and $\var(a(\mvzero,n\mvx))$ are both of order $\log n$ (see Chayes, Chayes and Durrett~\cite{ccd86}, and Newman and Zhang~\cite{nz97}).  When $F(0)<1/2$, we do not know of any distributional convergence result in any dimension. 

Convergence to the  Tracy-Widom law is known for {\it directed} last-passage percolation in $\dZ^2$ under very special conditions (see Subsection~\ref{sub:lit} for details), but the techniques do not carry over to the undirected case. Naturally, one may expect that convergence to something like the Tracy-Widom distribution may hold for undirected first-passage percolation also, but surprisingly, this does not seem to be the case. In the following subsection, we present our main result: a Gaussian CLT for undirected first-passage percolation when the paths are restricted to lie in thin cylinders. This gives rise to an interesting question: as the cylinders become thicker, when does the CLT break down, if it does?
 
\subsection{Our results} 
\label{sec:statement_of_results}
We consider first-passage percolation on $\dZ^d$ with height restricted by an integer $h$ (that will be allowed to grow with $n$). We assume that the edge weight distribution $F$ satisfies a standard admissibility criterion, defined below.

\begin{defn}\label{def:admble}
	Given the dimension $d$, we call a probability distribution function $F$ on the real line \emph{admissible} if $F$ is supported on $[0,\infty)$, is nondegenerate and we have $F(\gl)<p_c(d)$ where $\gl$ is the smallest point in the support of $F$ and  $p_c(d)$ is the critical probability for Bernoulli bond percolation in~$\dZ^d$.     
\end{defn} 
       
For simplicity we will consider only first-passage time from $\mvzero$ to $n\mve_1$ where $\mve_1$ is the first coordinate vector. The same method can be used to prove similar results for $a(\mvzero,n\mvx)$ where $\mvx$ has rational coordinates. Define  $a_n(h)$ as the  first-passage time  to the point $n\mve_1$ from the origin in the graph $\dZ\times [-h,h]^{d-1}$, formally
\[
 a_n(h):=\inf\{\go(\cP)\mid \cP \text{ is a path from $\mvzero$ to $n\mve_1$ in } \dZ\times [-h,h]^{d-1}\}.
\]   
Here, by $[-h,h]$ we mean the subset $[-h,h]\cap \dZ$ of $\dZ$.  
Informally, $a_n(h)$ is the minimal passage time over all paths which deviate from the straight line path joining the two end points by a distance at most $h$.
We also consider cylinder first-passage time (see Smyth and Wierman~\cite{sw78}, Grimmett and Kesten~\cite{gk84}). A path $\cP$ from $\mvzero$ to $n\mve_1$ is called a cylinder path if it is contained within the $x_1=0$ and $x_1=n$ planes. We define 
\begin{align*}
t_n(h)&:=\inf\{\go(\cP)\mid \cP \text{ is a path from $\mvzero$ to $n\mve_{1}$ in } [0,n]\times [-h,h]^{d-1}\}
\text{ and}\\
 T_n(h) & :=\inf\{\go(\cP)\mid \cP \text{ is a path connecting  } \{0\}\times  [-h,h]^{d-1} \text{ and }\\
   & \qquad\qquad\{n\}\times [-h,h]^{d-1} \text{ in } [0,n]\times [-h,h]^{d-1}\}.
\end{align*}
Clearly $a_n(h), t_{n}(h)$ and  $T_n(h)$ are non-increasing in $h$ for any $n\ge 1$. 
Our main result is that for cylinders that are `thin' enough, we have Gaussian CLTs for $a_n(h), t_n(h)$ and $T_n(h)$ after proper centering and scaling. 

\begin{theorem}\label{thm:mainfpp1}
	    Suppose that the edge-weights $\go_e$'s are i.i.d.~ random variables from an admissible distribution $F$. Suppose $\E[\go^p]<\infty$ for some $p>2$. 
	Let $\{h_n\}_{n\ge 1}$ be a sequence of integers satisfying $h_n=o(n^\ga)$ where  
\begin{align*}
	 \ga< 
        \frac{1}{d+1 + {2(d-1)}/{( p - 2)}}   
\end{align*}     
Then we have
	\[
		\frac{a_n(h_n) - \E[a_n(h_n)]}{\sqrt{\var(a_n(h_n))}} \weakc N(0,1) \text{ as } n\to\infty.
	\]                                                
In particular, if $\E[\go^p]<\infty$ for all $p\ge 1$ then the CLT holds when $h_n=o(n^\ga)$ with $\ga<1/(d+1)$.
If $h_n=O(1)$ then the $F(\gl)<p_c(d)$ condition is not needed. Moreover, the same result is true for $t_n(h_n)$ and $T_n(h_n)$.  
\end{theorem}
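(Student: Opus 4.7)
My approach is a block decomposition plus Lyapunov CLT. Fix a scale $L = L_n$, to be tuned at the end in terms of $n$, $h_n$ and $p$, and partition $[0,n]$ into $m := \lfloor n/L \rfloor$ consecutive intervals of length $L$, producing edge-disjoint slabs $B_i := [(i-1)L, iL] \times [-h_n, h_n]^{d-1}$. Let $X_i$ denote the cylinder passage time across $B_i$ (the slab analogue of $T_L(h_n)$). Since the slabs share no edges, $X_1,\ldots,X_m$ are i.i.d. The scheme is (i) to sandwich $T_n(h_n)$, and similarly $a_n(h_n), t_n(h_n)$, between $\sum_i X_i$ and $\sum_i X_i + G_n$ for a small gluing remainder $G_n$, and then (ii) to apply a Lyapunov CLT to $\sum_i (X_i - \E X_i)$.

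For step (i), the lower bound $T_n(h_n) \geq \sum_i X_i$ is structural: from any optimal $T_n$-path, for each $i$ extract the subpath between its \emph{last} visit to the hyperplane $\{(i-1)L\}\times[-h_n,h_n]^{d-1}$ and its \emph{first} subsequent visit to $\{iL\}\times[-h_n,h_n]^{d-1}$. Such a subpath stays inside $B_i$ (it touches neither left nor right face of $B_i$ in between, and the transverse coordinates are already confined to $[-h_n,h_n]^{d-1}$), so it is a valid left-right cylinder path and has weight at least $X_i$; the $m$ subpaths are time-disjoint, which gives the sum. The matching upper bound comes from concatenating the optimal $X_i$-paths with short gluing paths of $O(h_n)$ edges on each interface, so that $G_n$ is dominated by a sum of $m-1$ i.i.d.\ interface costs. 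The comparisons $|a_n(h_n) - T_n(h_n)| + |t_n(h_n) - T_n(h_n)| = O(h_n)$ add only negligible endpoint corrections. For step (ii), the Lyapunov ratio $m\E|X_1-\E X_1|^p/(m\var(X_1))^{p/2}$ must vanish. The lower bound $\var(X_1) \gtrsim L$ in the thin-cylinder regime comes from fixing $\Theta(L)$ direct-line edges and arguing, using admissibility of $F$, that each is pivotal with positive probability, then summing their variance contributions. The $p$-th moment upper bound on $X_1-\E X_1$ is obtained by truncating each edge weight at scale $L^{1/p}$ (harmless under $\E[\go^p]<\infty$) and applying a Burkholder--Rosenthal estimate to the edge-by-edge martingale decomposition of $X_1$; the ranges $2<p<4$ and $p\geq 4$ correspond to two different levels of iteration of this inequality (the latter exploiting moments up to order $\lfloor p/2\rfloor$), which is why the allowed exponent $\alpha$ jumps at $p=4$.

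Matching the Lyapunov upper bound $L\ll n^{\beta(p,d)}$ against the gluing lower bound $L\gg h_n^{\gamma(d)}$ (needed to make $G_n = o_P(\sqrt{n})$) yields exactly the admissible range of $\alpha$ in the theorem; as $p\to\infty$, the Lyapunov constraint becomes vacuous and one is left with $\alpha<1/(d+1)$, which coincides with the Licea--Newman--Piza transversal-fluctuation lower bound. The main obstacle is precisely the $p$-th moment bound on $X_1-\E X_1$: Kesten's and Talagrand's concentration inequalities for FPP require finite exponential moments, so in the polynomial-moment regime one must substitute truncation plus a careful Burkholder--Rosenthal estimate on the edge martingale, and it is this step that drives the $p$-dependence in the final exponent. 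When $h_n = O(1)$ the $X_i$'s live in a fixed finite product graph and are essentially sums of i.i.d.\ edge weights, so the CLT reduces to the classical one and the admissibility hypothesis on $F$ is not required.
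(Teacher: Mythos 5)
Your overall skeleton (block decomposition of $[0,n]\times[-h_n,h_n]^{d-1}$ into i.i.d.\ slabs, sandwiching $T_n$ between $\sum_i X_i$ and $\sum_i X_i$ plus an $O(mh_n)$ gluing term, then a Lyapunov CLT) is exactly the paper's starting point, but two of your key estimates are wrong or unsubstantiated, and a third, essential ingredient is missing. First, the variance lower bound $\var(X_1)\gtrsim L$ is false in the regime $h_n\to\infty$: the probability that a fixed direct-line edge is pivotal is \emph{not} bounded below uniformly in $h_n$ (heuristically it decays like $h_n^{-(d-1)}$, since the geodesic can avoid any given edge at small cost when the cross-section is large). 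What one can prove — and what the paper proves via a martingale/Efron--Stein argument combined with a comparison to a modified FPP with edge weights $h(\go)=\E[(\go-\eta)_+]$ — is only $\var(T_L)\geq cL/h_n^{d-1}$; indeed the paper conjectures this order is sharp. Second, your Burkholder--Rosenthal bound on $\E|X_1-\E X_1|^p$ is applied to a martingale with $\Theta(Lh_n^{d-1})$ increments, so without further input it yields $(Lh_n^{d-1})^{p/2}$, not $L^{p/2}$. To get $L^{p/2}$ one must localize the increments to the geodesic — the paper does this by counting ``essential'' edges and invoking Kesten's Proposition 5.8 (this is precisely where admissibility $F(\gl)<p_c(d)$ is used) to show the geodesic has $O(L)$ edges with exponential tails. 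Your proposal never explains how the transverse volume $h_n^{d-1}$ is removed from the moment bound.

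The most serious gap is structural: a single-scale block argument, even with the correct bounds $\var(T_L)\geq cL/h_n^{d-1}$ and $\E|T_L-\E T_L|^p\leq CL^{p/2}$, only yields
\[
\ga<\Bigl(d+1+\tfrac{2p(d-1)}{p-2}\Bigr)^{-1},
\]
i.e.\ the first case of the theorem; as $p\to\infty$ this tends to $1/(3d-1)$, not $1/(d+1)$. The second case ($p\geq 4$) and the limiting exponent $1/(d+1)$ require the paper's renormalization: one recursively re-expresses $T_{l_i}$ as a sum of i.i.d.\ copies of $T_{l_{i+1}}$ along a decreasing sequence of scales $l_i\sim n^{\gb_i}$, controls the $2q$-th moments at each level via a combinatorial bound for i.i.d.\ sums (Lemma 6.2), solves a linear system for $(\gb_1,\dots,\gb_t)$, and lets the depth $t\to\infty$. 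Your remark that the two cases ``correspond to two different levels of iteration'' of Burkholder--Rosenthal does not capture this: the gain comes from iterating the \emph{block decomposition} across unboundedly many scales, not from iterating a moment inequality on a fixed martingale, and no fixed finite depth suffices to reach $\ga<1/(d+1)$.
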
 
In Section \ref{sec:main}, we will present a generalization of this result (Theorem~\ref{thm:maingr}) to cylinders of the form $\dZ \times G_n$ where $\{G_n\}$ is an arbitrary sequence of undirected connected graphs.


Theorem~\ref{thm:mainfpp1} give rise to a new exponent $\change{\gc_{F}(d)}$ defined as 
\begin{align*}
\change{\gc_{F}}(d)&:=\sup\biggl\{\ga : \frac{a_n(n^\ga) - \E[a_n(n^\ga)]}{\sqrt{\var(a_n(n^\ga))}} \weakc N(0,1) \text{ as } n\to\infty\biggr\}.	
\end{align*}                           
Clearly we have $\change{\gc_{F}(d)}\ge 1/(d+1)$ for $F$ having all moments finite and satisfying the conditions in Theorem \ref{thm:mainfpp1}. 

Is $\change{\gc_{F}(d)}$  actually equal to $1/(d+1)$? We do not have a rigorous answer for that yet. However, under some well known but unproven hypotheses about existence of fluctuation exponent $\chi(d)$ and transversal exponent $\xi(d)$ (see the next Subsection~\ref{sec:fluc}), and strict convexity of the limiting shape we prove in Sections~\ref{sec:clt_ht} and \ref{sec:pclt} that 
$\change{\gc_{F}(d)}=\xi(d)$ 
when the fluctuation exponent is strictly positive, or if the dimension is $2$ or $3$. For $d=2$, this result is also supported by numerical simulations (Section \ref{sec:num}).

\begin{conj}[Partly proved in Sections \ref{sec:clt_ht} and \ref{sec:pclt}]\label{conj:onethird}
For all $d\ge 2$ and $F$ having finite exponential moment, $\change{\gc_{F}(d)} = \xi(d)$. 
\end{conj}

An interesting feature of the proof of Theorem \ref{thm:mainfpp1} is that while it is relatively easy to get a CLT for cylinders of width $n^{\alpha}$ for $\alpha$ sufficiently small, to go all the way up to $\alpha = 1/(d+1)$ one needs a somewhat complicated `renormalization' argument that has to be taken to a certain depth of recursion, where the depth depends on how close $\alpha$ is to $1/(d+1)$. This renormalization step is required because of the gap in the lower and upper bounds for the moments. We believe this step can be removed with the correct order for the moments.

A deficiency of Theorem \ref{thm:mainfpp1} is that we do not have formulas for the mean and the variance of $a_n(h_n)$. Still, we have some bounds: the following result states that under the hypotheses of Theorem~\ref{thm:mainfpp1} the mean grows linearly with $n$ and the growth rate does not depend on $h_n$  as long as $h_n\to\infty$. It also gives upper and lower bounds for the variance of $a_n(h_n)$.

\begin{prop}\label{prop:mvbd}
	Let $\mu_n(h_n)$ and $\gs_n^2(h_n)$ be the mean and variance of $a_n(h_n)$. Assume that $h_n \to \infty$ as $n\to\infty$. Then  
	\[
	  \lim_{n\to \infty} \frac{\mu_n(h_n)}{n}= \nu(\mve_1),
	\]                                        
	where $\nu(\mve_1)$ is defined as in \eqref{hw}. Moreover, if $F$ is admissible  we have
	\[
		c_1 \frac{n}{h_n^{d-1}} \le \gs_n^2(h_n) \le c_2 n  
	\]                                     
	for some absolute constants $c_1,c_2>0$ depending only on $d$ and $F$. If $h_n=h$ for all $n$ for fixed $h\in (0,\infty)$, then both $\lim_{n\to \infty} \mu_n(h)/n$ and $\lim_{n\rightarrow\infty} \sigma_n^2(h)/n$ exist and are positive for any non-degenerate distribution $F$ on $[0,\infty)$, but their values depend on $h$.
\end{prop}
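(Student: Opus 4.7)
First I would prove the mean convergence for fixed $h$ and then pass to general $h_n\to\infty$ by monotonicity. Concatenating a path from $\mvzero$ to $n_1\mve_1$ with a path from $n_1\mve_1$ to $(n_1+n_2)\mve_1$ inside the slab $\dZ\times[-h,h]^{d-1}$ gives $\E[a_{n_1+n_2}(h)]\le\E[a_{n_1}(h)]+\E[a_{n_2}(h)]$, so Fekete's lemma yields $\mu_n(h)/n\to\bar\nu(h):=\inf_m\E[a_m(h)]/m\ge\nu(\mve_1)$. Fixing $m$ and letting $h\to\infty$, any finite path from $\mvzero$ to $m\mve_1$ is eventually contained in the slab, so $a_m(h)\searrow a(\mvzero,m\mve_1)$ a.s., and monotone convergence gives $\lim_{h\to\infty}\E[a_m(h)]/m=\E[a(\mvzero,m\mve_1)]/m$, which tends to $\nu(\mve_1)$ as $m\to\infty$. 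Combined with the Fekete bound $\bar\nu(h)\le\E[a_m(h)]/m$, this shows $\lim_{h\to\infty}\bar\nu(h)=\nu(\mve_1)$. For $h_n\to\infty$, monotonicity in $h$ gives $\mu_n(h_n)/n\le\mu_n(h_0)/n\to\bar\nu(h_0)$ once $h_n\ge h_0$, so $\limsup_n\mu_n(h_n)/n\le\nu(\mve_1)$; the matching liminf is immediate from $a_n(h_n)\ge a(\mvzero,n\mve_1)$ and \eqref{hw}.

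For the variance I would use two different Doob martingale decompositions of $a_n(h_n)$. The upper bound $c_2 n$ follows from the standard Kesten method: with $\Delta_i=\E[a_n(h_n)\mid\cF_i]-\E[a_n(h_n)\mid\cF_{i-1}]$ for an edge-by-edge filtration $\cF_i=\sigma(\go_{e_1},\dots,\go_{e_i})$, the perturbation estimate $\E[\Delta_i^2]\le 4\E[\go_{e_i}^2\ind\{e_i\in\pi^*\}]$ for an optimal path $\pi^*$, combined with the greedy lattice-animal bound of Kesten~\cite{kes86,kes93} (which gives $\E[|\pi^*|]=O(n)$ uniformly in $h$ under admissibility), yields $\sigma_n^2(h_n)\le c_2 n$. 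For the lower bound $c_1 n/h_n^{d-1}$, I would partition the slab into $K\asymp n/L$ disjoint boxes $B_1,\dots,B_K$ of axial length $L=Ch_n^{d-1}$ and run the Doob martingale against the box-filtration $\cG_j=\sigma(\go_e:e\in B_1\cup\dots\cup B_j)$, so that $\sigma_n^2(h_n)=\sum_j\E[\Delta_j^2]$. The key step, and the main obstacle of the proof, is a uniform-in-$j$ lower bound $\E[\Delta_j^2]\ge c>0$ for a constant fraction of the indices: since any path from $\mvzero$ to $n\mve_1$ in the slab must traverse every transverse slice and hence every $B_j$, a perturbation-influence estimate should show that with positive probability a well-chosen edge in $B_j$ has conditional impact of order one on $a_n(h_n)$. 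The delicate point is ruling out degenerate conditional configurations in which the minimum passage time is essentially insensitive to the weights in $B_j$; controlling this seems to require admissibility to bound the geometry of the conditional optimal path. Summing over $j$ then gives $\sigma_n^2(h_n)\ge cK\ge c_1 n/h_n^{d-1}$.

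For the fixed-$h$ assertion under only non-degeneracy of $F$ on $[0,\infty)$, I would exploit the quasi-1D structure of $\dZ\times[-h,h]^{d-1}$. Choosing $\lambda_0>0$ with $F(\lambda_0)<1$, each cross-section $\{i\}\times[-h,h]^{d-1}$ has all crossing edges of weight at least $\lambda_0$ with probability $(1-F(\lambda_0))^{(2h+1)^{d-1}}>0$, so by Borel-Cantelli such ``bottleneck'' cross-sections occur at positive density, and every path from $\mvzero$ to $n\mve_1$ must pay at least $\lambda_0$ at each, forcing $\bar\nu(h)>0$. The existence of the limit $\bar\nu(h)$ itself is already covered by the Fekete argument above. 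The same bottlenecks then serve as regeneration points: between consecutive bottlenecks the passage time decomposes into essentially i.i.d.\ blocks of finite mean and variance, and the standard renewal central limit theorem delivers existence and positivity of $\lim_n\sigma_n^2(h)/n$.
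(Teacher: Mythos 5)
Your treatment of the mean is correct and takes a genuinely different route from the paper: you interchange the limits $n\to\infty$ and $h\to\infty$ using monotonicity of $a_n(h)$ in $h$ together with $a_m(h)\searrow a(\mvzero,m\mve_1)$, whereas the paper (Lemma~\ref{lem:posmean}) chops the cylinder into blocks of length $\asymp\min\{n^{1/2},h_n\}$ and compares with the unrestricted cylinder passage time $t(\mvzero,n\mve_1)$ via subadditivity and monotone convergence in the width. Both work; yours is arguably more elementary. Your variance upper bound is essentially the paper's argument (Efron--Stein/Kesten martingale plus the path-length estimate of Lemma~\ref{lem:plen}), so no comment there.

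The variance lower bound is a genuine gap, and you have flagged it yourself: the uniform bound $\E[\Delta_j^2]\ge c$ for the box martingale is exactly the hard part, and nothing in your sketch produces it --- conditioning on the complement of a box does not obviously leave an order-one influence, and ``ruling out degenerate conditional configurations'' is not a step one can gesture at. The paper avoids this issue entirely. It runs the martingale edge by edge, lower-bounds $\var(t)\ge N^{-1}\bigl(\sum_i\E[\Delta_i]\bigr)^2$ by Cauchy--Schwarz (with $N=O(nk)$ the total number of edges and $\Delta_i$ the expected one-edge resampling influence), observes that $\sum_i\Delta_i$ dominates $\sum_{e_i\in\cP_*}h(\go_i)$ where $h(x)=\E[(x-\eta)_+]$, and recognizes the latter as a first-passage time $t(\mvgo')$ for the transformed weights $\go_i'=h(\go_i)$. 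Since $F'(0)=\pr(\go=\gl)\le F(\gl)<p_c(d)$, the transformed time constant is positive, hence $\E\bigl[\sum_i\Delta_i\bigr]\ge cn$ and $\var(t)\ge cn^2/N\ge c_1 n/h_n^{d-1}$. This weight-transformation trick is the idea your proposal is missing; summing squared influences box by box is not how the bound is obtained.

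The fixed-$h$ variance limit is also not established by your argument. Cross-sections on which every crossing edge exceeds $\lambda_0$ are fine for proving positivity of the mean rate (the paper instead uses $\E[a_n(G)]\ge n\,\E[\min\text{ of }v\text{ i.i.d.\ weights}]$), but they are not regeneration points: knowing that the geodesic pays at least $\lambda_0$ somewhere in a slice does not decouple the passage times on the two sides, so the claimed ``essentially i.i.d.\ blocks'' and the renewal CLT do not apply. What the paper uses is the exact i.i.d.\ decomposition of side-to-side times, $\sum_i X_i\le T_n(G)\le\sum_i X_i+S_{mD}$ with $X_i$ i.i.d.\ copies of $T_l(G)$ (Lemma~\ref{lem:ulbd}); this yields $\bigl|\sigma_n-(m\sigma_l^2+\sigma_r^2)^{1/2}\bigr|\le mD(\mu^2+\sigma^2)^{1/2}$ and then a Cauchy-sequence argument, first along $n=2^k$ and then for general $n$, in Lemma~\ref{lem:fixedG}. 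You need an approximate-additivity statement of this kind, not a regeneration structure, to get existence of $\lim_n\sigma_n^2(h)/n$.
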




In fact when $h_n=h$  for all $n$ for fixed $h\in(0,\infty)$, we can say much more.   
Define $\mu(h):=\lim_{n\to\infty} \mu_n(h)/n $ and $\gs^2(h):=\lim_{n\to\infty}\gs^{2}_n(h)/n$. Existence of the limits follow from Proposition~\ref{prop:mvbd}. Now consider the 
continuous process $X(\cdot)$ defined by  $X(n)=t_n(h)-n\mu(h)$ for $n\in \{0,1,\ldots\}$ and extended by linear interpolation. Then we have the following result.

\begin{prop}\label{prop:bmh}
	Assume that  $\E[\go^p]<\infty$ for some $p>2$ where $\go\sim F$. Then the scaled process $\{ (n\gs^2(h))^{-1/2} X(nt) \}_{ t\ge 0}$ converges in distribution to the standard Brownian motion as $n\to\infty$. 
\end{prop}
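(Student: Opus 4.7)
The plan is to build a regenerative structure along the cylinder axis, apply Donsker's invariance principle to the resulting i.i.d.\ block sums, and transfer from block-index to $x_1$-index via a random time change.

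\emph{Regeneration via bottleneck slabs.} For fixed $h$, the cylinder $\dZ \times [-h,h]^{d-1}$ has finite cross-section of size $(2h+1)^{d-1}$, which enables a bottleneck construction. Since $F$ is non-degenerate on $[0,\infty)$, pick $0 \le a < b$ in its support, a small $\eps > 0$, and a large window half-length $L_0$. Call $k$ a \emph{regeneration point} if every edge of the spine $\{((j,\mvzero),(j+1,\mvzero)) : k - L_0 \le j < k + L_0\}$ has weight at most $a + \eps$, while every other edge in the window $[k-L_0, k+L_0] \times [-h,h]^{d-1}$ has weight at least $b - \eps$. This is a product event on disjoint edge sets with fixed positive probability $p_0 > 0$ independent of $k$. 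Choosing $L_0, \eps$ so that the within-slab cost gap $(b-\eps) - 2L_0(a+\eps) > 0$ dominates the worst-case saving of any deviating path (for unbounded weights, one first truncates edge weights at a level $C$ with $F(C) < 1$ and uses $\E[\omega^p] < \infty$, $p > 2$, to bound the contribution of exceptional edges to $t_n(h)$ by $o_P(\sqrt n)$), every optimal point-to-point path that crosses a regeneration slab is forced through the single vertex $(k, \mvzero)$.

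\emph{i.i.d.\ block decomposition.} Let $0 = \tau_0 < \tau_1 < \tau_2 < \cdots$ be the centers of a maximal non-overlapping sequence of regeneration slabs (consecutive slabs separated by at least $2L_0 + 1$). The gaps $\xi_i := \tau_i - \tau_{i-1}$ (for $i \ge 2$) are i.i.d.\ with geometric-type tails, so all moments are finite. Set $Y_i := t_{\tau_i}(h) - t_{\tau_{i-1}}(h)$. By the bottleneck property, $Y_i$ equals the minimum passage time from $(\tau_{i-1}, \mvzero)$ to $(\tau_i, \mvzero)$ inside the sub-cylinder $[\tau_{i-1}, \tau_i]\times[-h,h]^{d-1}$, and the pairs $(\xi_i, Y_i)_{i \ge 2}$ are i.i.d. The crude bound $Y_i$ is at most the sum of $O(\xi_i)$ spine weights, together with $\E[\omega^p]<\infty$ for $p>2$ and all moments of $\xi_i$, yields $\E Y_i^2 < \infty$ via a H\"older/Minkowski estimate. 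By Proposition~\ref{prop:mvbd} applied along $\tau_k \to \infty$, $\E Y_1 = \mu(h)\,\E \xi_1$.

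\emph{Donsker and random time change.} Put $Z_i := Y_i - \mu(h)\xi_i$; these are i.i.d.\ mean-zero with $\sigma_*^2 := \var(Z_i) \in (0, \infty)$. Donsker's theorem gives
\[
n^{-1/2} \sum_{i=1}^{\lfloor ns \rfloor} Z_i \weakc \sigma_* W_s \quad \text{in } C[0,\infty),
\]
with $W$ a standard Brownian motion. Let $N(m) := \max\{k : \tau_k \le m\}$; by the elementary renewal theorem $N(m)/m \to 1/\E\xi_1$ a.s., and uniformly on compacts after rescaling. Since $t_{\tau_{N(m)}}(h) - \mu(h)\tau_{N(m)} = \sum_{i=1}^{N(m)} Z_i$ and the boundary remainder $|t_m(h) - t_{\tau_{N(m)}}(h) - \mu(h)(m - \tau_{N(m)})|$ is stochastically dominated by a single inter-regeneration passage, hence $O_P(1) = o_P(\sqrt m)$, the invariance principle under random time change (Billingsley, \emph{Convergence of Probability Measures}, Theorem 14.4) yields $(n\sigma^2(h))^{-1/2} X(nt) \weakc W_t$, with the identification $\sigma^2(h) = \sigma_*^2 / \E\xi_1$ forced by matching variances with Proposition~\ref{prop:mvbd}. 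The main obstacle is the regeneration construction itself: ensuring the bottleneck genuinely forces every optimal path through $(k, \mvzero)$, uniformly in the surrounding random environment. For unbounded edge-weight distributions, the spine-versus-deviation cost comparison is delicate and is rescued by the truncation-plus-moment argument sketched above; once regeneration is in hand, the Donsker and renewal-time-change steps are routine.
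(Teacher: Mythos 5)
Your proposal takes a genuinely different route from the paper. The paper does not construct an exact regeneration structure; instead it uses the approximate decomposition of Lemma~\ref{lem:ulbd} (side-to-side times $T_{a,b}(G)$ on disjoint slabs are exactly independent, and their sum differs from $T_n(G)$ by a stochastically bounded error), invokes the already-proved CLT of Theorem~\ref{thm:maingr} for the finite-dimensional distributions, and establishes tightness via the $p$-th central moment bound of Proposition~\ref{prop:mbd} combined with Billingsley's moment criterion. Your regeneration/renewal strategy would, if carried out, give a self-contained proof that does not depend on the main CLT machinery, which is a genuine alternative worth mentioning. However, as written the regeneration construction has a concrete gap.

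The cost comparison is stated backwards. You require $(b-\eps) - 2L_0(a+\eps) > 0$, which asks the window half-length $L_0$ to be \emph{small}. But to force every optimal path through $(k,\mvzero)$ one must make the window \emph{long}: a path that enters the window at $(k-L_0,v)$ with $v\neq\mvzero$ and stays off the spine pays at least $2L_0(b-\eps)$, while the detour-to-spine-and-back route costs at most $(2L_0-2)(a+\eps) + C\,h(d-1)(b+\eps)$; the latter is cheaper precisely when $L_0$ is large compared to the cross-sectional diameter $h(d-1)$, not small. Moreover, the regeneration event must constrain the non-spine window edges from \emph{both} sides (say, to lie in $[b-\eps,b+\eps]$): with only the lower bound $\omega_e\geq b-\eps$, the detour-to-spine route has no a priori cost bound, so one cannot conclude that it beats a path staying off the spine. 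With two-sided constraints the regeneration event still has fixed positive probability for any non-degenerate $F$ (choose $a<b$ in the support), which also makes your truncation-at-level-$C$ step unnecessary; as sketched it would in any case not deliver $o_P(\sqrt n)$, since the excess weights occur on a constant fraction of the $\Theta(n)$ edges and are not asymptotically negligible without letting $C\to\infty$, which destroys the fixed regeneration probability. Finally, one still needs a mild argument that the optimal path cannot enter and leave the window repeatedly in a way that defeats the surgery; this is routine for a cylinder of fixed cross-section but should be said. Once these are fixed, the Donsker-plus-random-time-change conclusion is sound, and the moment condition $\E[\go^p]<\infty$, $p>2$, enters exactly where you say: controlling the maximum boundary remainder over $O(n)$ blocks, where finite second moment alone would not give $o_P(\sqrt n)$ uniformly.
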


Here we mention that even though we have lower and upper bounds for the variance of $a_{n}(h_{n})$ in Proposition~\ref{prop:varbd}, none of the bounds seem to be the correct one, at least when $d=2$ as $h_{n}\to\infty$. In fact numerical simulation results suggests the following.

\begin{conj}\label{conj:varbd}
For $d=2$ and $h_{n}\ll n^{2/3}$, $\var(a_{n}(h_{n}))=\Theta(nh_{n}^{-1/2})$.
\end{conj}

Finally let us mention that a variant of Theorem~\ref{thm:mainfpp1} can be proved for the undirected first-passage  {\it site} percolation model also. Here instead of edge-weights $\{\go_e\mid e\in E\}$ we have vertex weights $\{\go_{\mvx} \mid \mvx\in \dZ^d\}$ and travel time  for a path $\cP$ is defined by $\go(\cP)=\sum_{v\in \cP} \go_v$. The same proof technique should work. The same remark also holds for semi-directed first-passage model where the paths are not allowed to move backward in a particular direction. 


\subsection{Fluctuation exponents}\label{sec:fluc}
In the physics literature, there are two main exponents $\chi$ and $\xi$ that describe, respectively, the longitudinal and transversal fluctuations of the growing surface $B_t$. For example, it is expected under mild conditions that the first-passage time $a(\mvzero,n\mvx)$ has standard deviation of order $n^\chi$, and the exponent $\chi$ is independent of the direction $\mvx\in\dZ^d$. It is also expected that all the paths achieving the minimal time   $a(\mvzero,n\mvx)$ deviate from the straight line path joining $\mvzero$ to $n\mvx$ by distance at most of the order of $n^\xi$, that is all the minimal paths are expected to lie entirely inside the cylinder centered on the straight line joining $\mvzero$ to $n\mvx$ whose width is of the order of $n^\xi$ (see Section \ref{sec:clt_ht} for a rigorous definition of $\chi$ and $\xi$). 

In general the exponents $\chi$ and $\xi$ are expected to depend only on the dimension $d$ not the distribution $F$. Moreover they are also conjectured to satisfy the scaling relation $\chi=2\xi-1$ for all $d$ (see Krug and Spohn~\cite{ks91}). \change{Very recently this relation has been proved under certain natural but unproven assumptions (see Chatterjee~\cite{chatterjee11}, and Auffinger and Damron~\cite{am11}).}
The predicted values for $d=2$ (for models whose exponents are expected to be same in all directions) are $\chi=1/3$ and $\xi=2/3$ (see Kardar, Parisi and Zhang~\cite{kpz86}). For higher dimensions there are many conflicting predictions. However it is believed that above some finite critical dimension $d_c$, the exponents satisfy $\chi=0$ and $\xi=1/2$. 

We briefly describe the rigorous results known about the exponents $\chi$ and $\xi$. The first nontrivial upper bound on the variance of $a(\mvzero,n\mvx)$ was $O(n)$ for all $d$ due to Kesten~\cite{kes93}.  The best known upper bound of $n/\log n$ is due to Benjamini, Kalai and Schramm \cite{bks03}.
In $d=2$ the best known lower bound of $\log n$  is due to Pemantle and Peres \cite{pp94} for exponential edge weights, 
Newman and Piza \cite{np95} for general edge weights satisfying $F(0)<p_c(2)$ or $F(\gl)<p_c^{dir}(2)$ for $\gl$ being the smallest point in the support of $F$ where $p_c^{dir}(2)$ is the critical probability for directed Bernoulli bond percolation, 
and Zhang~\cite{zhang08} for $\mvx=\mve_1$ and edge weight distributions having finite exponential moments and satisfying 
$F(\gl)\ge p_c^{dir}(2), F(\gl-)=0,\gl>0$.  

Hence the only nontrivial bound known for $\chi$ is $\chi\le 1/2$. Note that the bound $0\le \chi\le 1/2$ along with the scaling relation (which is unproven) would imply that $1/2\le \xi\le 3/4$. In fact using a closely related exponent $\chi'$ which satisfies $\chi'\ge 2\xi-1$ and $\chi'\le 1/2$ (see Newman and Piza~\cite{np95}, Kesten~\cite{kes93} and Alexander~\cites{alex93,alex97}), it was proved in \cite{np95} that $\xi\le 3/4$ in any dimension for paths in the directions of strict convexity of the limit shape.  Moreover, Licea, Newman and Piza~\cite{lnp96}, comparing appropriate variance bounds, proved that $\xi(d)\ge 1/(d+1)$ for all dimensions $d$. They also proved that $\xi'(d)\ge 1/2$ for all dimensions $d$ for a related exponent $\xi'$ of $\xi$.

Our results show that under some natural but unproven assumptions, $\xi(d)$ is also expected to be the threshold where the Gaussian CLT breaks down.


\subsection{Comparison with directed last-passage percolation} 
\label{sub:lit}
In all the previous discussions we used undirected first-passage times.
A directed model is obtained when instead of all paths, one considers only directed paths. A directed path is a path that moves only in the positive direction at each step (e.g.\ in $d=2$, the path moves only up and right).  Let us restrict ourselves to $d=2$ henceforth. 
The  directed (site/bond) last-passage time to the point $(n,h)$ starting from the origin is defined as 
\begin{align*}
   L^s_\uparrow(n,h)&:= \sup\{ \go(\cP)\mid \cP \in \Pi(n,h)\},
\end{align*}
where $\Pi(n,h)$ is the set of all directed paths from $(0,0)$ to $(n,h)$. Note that all the paths in $\Pi(n,h)$ are inside the rectangle $[0,n]\times [0,h]$. 

The directed last-passage site percolation model in $d=2$ has received particular attention in recent years, due to its myriad connections with the totally asymmetric simple exclusion process, queuing theory and random matrix theory. An important breakthrough, due to Johansson~\cite{kj00}, says that when the vertex weights $\go_{\mvx}$'s are i.i.d.~geometric random variables,   $L^s_\uparrow(n, n)$  has fluctuations of order $n^{1/3}$ and has the same limiting distribution as the largest eigenvalue of a GUE random matrix upon proper centering and scaling. (This is also known as the Tracy-Widom law.) Moreover, this holds if we replace $L^s_\uparrow(n,n)$ with $L^s_\uparrow(n,\lfloor \gr n\rfloor)$ for any $\rho \in (0,1]$. This continues to hold if one replaces geometric by exponential or bernoulli random variables~\cites{kj01,gtw01}. \change{However universality of this limit result for a general class of vertex weight distributions  is still open. }

Since the above result holds for arbitrary $\rho > 0$, one can speculate whether we can actually take $\rho \rightarrow 0$ as $n \rightarrow \infty$, i.e.\ look at directed last-passage percolation in thin rectangles. Indeed, the analog of Johansson's result in this setting was proved by several authors~\cites{bs05,bm05,suidan06}  in recent years for quite a general class of vertex weight distributions, provided the rectangles are `thin' enough \change{(in particular for $\rho=n^{-(1-\ga)}$ with $\ga<3/7$ when the vertex weights have finite moments of all order)}. This contrasts starkly with our result about the Gaussian behavior of first-passage percolation in thin rectangles.

\subsection{Structure of the paper} 
\label{sub:structure_of_article}
The article is organized \change{as follows}. In Section \ref{sec:main} we state a general result that encompasses Theorem~\ref{thm:mainfpp1}. In Section \ref{sec:mean} we prove the asymptotic behavior of the mean of $a_n(G_n)$. Sections~\ref{sec:varlbd} and \ref{sec:pfmbd} contain, respectively, the lower bound for the variance and upper bounds for general central moments of $a_n(G_n)$.   In Section~\ref{sec:maingr} we prove the generalized version of Theorem~\ref{thm:mainfpp1}.  We consider the case of first-passage time across $[0,n]\times G$ when $G$ is a fixed graph in Section~\ref{sec:fixedg}. All the results till Section~\ref{sec:fixedg} are unconditional. However, when $G_{n}=[-h_{n},h_{n}]^{d-1}$ one can prove the CLT for a wider range of $h_{n}$ under a few natural but unproved assumptions. In Section~\ref{sec:clt_ht} we state the CLT under the assumption of  existence of fluctuation and transversal exponents, positive curvature of the limiting shape, etc. and we prove the stated results in  Section \ref{sec:pclt}. In particular, we show that the CLT holds all the way upto the height of the unrestricted geodesic under those assumptions in dimension $2$ and $3$. Finally in Section~\ref{sec:num} we present the numerical results. 

             
\section{Generalization} 
\label{sec:main}
In this section, we generalize the theorems of Section \ref{sec:introduction} to first-passage percolation on graphs on the form $\dZ\times G_n$, where $\{G_n\}$ is an arbitrary increasing sequence of \change{finite} undirected graphs with $k_n$ many edges and having diameter $d_n$. \change{To get the results for $d$-dimensional square lattice one takes $G_{n}=[-h_n,h_n]^{d-1}$ for $n\ge 1$.}

Before stating the results, let us fix our notations. The set $\{a,a+1,\ldots,b\}$ with the nearest neighbor  graph structure will be denoted by $[a,b]$. When $a=0$, we will simply write $[b]$ instead of $[0,b]$. Throughout the rest of the article we will consider the undirected first-passage bond percolation model with edge weight distribution $F$, as defined in the previous section. Let $\mu$ and $\gs^2$ be  the mean and the variance of $F$. 
We will use the standard notations $a_n=O(b_n)$ and $a_{n}=o(b_{n})$, respectively, in the case $\sup_{n\ge 1} a_{n}/b_{n}<\infty$ and $\lim_{n\to\infty}a_{n}/b_{n}=0$.

For two  finite connected graphs  $H$ and $G$, we define the product graph structure on $H\times G$ in the natural way, that is, there is an edge between $(u,w)$ and $(v,z)$ if and only if either $(u,v)$ is an edge in $H$ and $w = z$, or $u=v$ and $(w,z)$ is an edge in $G$.

We will consider first-passage percolation on a special class of product graphs. Fix an integer $n$ and a connected graph $G$ with a distinguished vertex $o\in G$. Let  $a_{n}(G)$ denote  the first-passage time from $(0,o)$ to $(n,o)$ in $\dZ\times G$. That is, 
\begin{align*}
	a_n(G) := \inf\{\go(\cP)\mid \cP \text{ is a path from } (0,o) \text{ to } (n,o) \text{ in } \dZ\times G\}
\end{align*}                    
where $\go(\cP):=\sum_{e\in\cP}\go_e$ is weight of the path $\cP$. We define the cylinder first-passage time $t_n(G)$ as 
\begin{align*}
	  t_n(G):=\inf\{\go(\cP):\cP \text{ is a path from $(0,o)$ to $(n,o)$ in } [0,n]\times G\}. 
\end{align*}
We also define the side-to-side (cylinder) first-passage time as follows: 
\begin{align} 
	\begin{split} 
		T_{a,b}(G):=\min&\{\go(\cP)\mid \cP \text{ is a path connecting the two sides }\\
		&\qquad \{a\}\times G \text{ and $\{b\}\times G$ in } [a,b]\times G\},
	\end{split}  \label{def:TabG} 
\end{align}                          
that is, $T_{a,b}(G)$ is the minimum weight among all paths that join the right boundary of the product graph $[a,b]\times G$ to the left boundary of it.  Note that it is enough to consider only those paths that start from some vertex in $\{a\}\times G$  and end at some vertex in $\{b\}\times G$, and lie in the set $[a+1,b-1]\times G$ throughout except for the first and last edges. One implication of this fact is that $T_{a,b}(G)$ is independent of the weights of the edges in the left and right boundaries $\{a\}\times G,\{b\}\times G$. We will write $T_{0,n}(G)$ simply as $T_n(G)$.

Now consider a nondecreasing sequence of connected graphs  $G_n=(V_n,E_n)$, $n\ge 1$.  By `nondecreasing' we mean that $G_n$ is a subgraph (need not be induced) of $G_{n+1}$ for all $n$. Let $o$ be a distinguished vertex in $G_1$, which we will call the \emph{origin} of $G_1$. Then $o\in G_n$ for all $n$. Let $k_n$ and $d_n$ be the  number of edges and the diameter of $G_n$, respectively.

Our object of study is first-passage percolation on  the product graph $\dZ\times G_n$ with i.i.d.\ edge weights from the distribution $F$.  In particular, we wish to understand the behavior of the first-passage time $a_n(G_n)$ from $(0,o)$ to $(n,o)$.  

The main result of this section is the following.
\begin{theorem}\label{thm:maingr}
	Let $G_n$ be a nondecreasing sequence of connected graphs with a fixed origin $o$. Let $d_{n}$ and $k_{n}$ be the diameter and the number of edges in $G_{n}$. Suppose that as $n \rightarrow \infty$, $k_n=O(d_n^\theta)$ for some fixed $\theta\ge 1$. Let $a_n(G_n)$ be the first-passage percolation time from $(0,o)$ to $(n,o)$ in the graph $\dZ\times G_n$. Suppose that a generic edge weight $\go$ satisfies $\E[\go^p]<\infty$ for some $p>2$. Then we have
	\[
		\frac{a_n(G_n)-\E[a_n(G_n)]}{\sqrt{\var(a_n(G_n))}} \weakc N(0,1)
	\]                                                     
	as $n\to\infty$ provided one of following holds:
	\begin{enumerateA}
		\item  There is a fixed connected graph $G$ such that  $G_n=G$ for all $n\ge 1$, or   
		\item  $G_n$'s are connected subgraphs of $\dZ^{d-1}$ for some $d> 1$, the edge weight distribution is admissible and $d_n=o(n^\ga)$, where 
		\[
		\ga<\frac{1}{2+\theta+{2\theta}/({ p-2})}.
		\]                                                    
	\end{enumerateA} 
	Moreover, the same result holds for $t_n(G_n),T_{n}(G_n)$ in place of $a_n(G_n)$.
\end{theorem}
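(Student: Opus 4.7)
The strategy is a block decomposition of $[0,n]$ together with a Lyapunov CLT applied to a sum of i.i.d.\ side-to-side passage times through the slabs; the technical heart is a Rosenthal-type $p$-th central moment bound on a single slab. Choose a length scale $L=L(n)$ with $d_n\ll L\ll n$, concretely $L=\lfloor n^\beta\rfloor$ for a suitable $\beta\in(\alpha,1)$, and partition $[0,n]$ into $m:=\lfloor n/L\rfloor$ disjoint slabs of length $L$. Set $S_j:=T_{jL,(j+1)L}(G_n)$. As observed right after \eqref{def:TabG}, each $S_j$ depends only on the edges strictly inside its own slab, so the $S_j$ are independent, and translation invariance in the $\dZ$-direction makes them i.i.d.

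\emph{Sandwich.} Any path crossing $[0,n]\times G_n$ from left to right must cross each of the $m$ slabs, hence $T_n(G_n)\ge\sum_{j<m} S_j$. For the upper direction, I would paste the $m$ optimal slab crossings together via short connecting paths of graph-length at most $d_n$ inside the hyperplanes $\{jL\}\times G_n$, producing
\[
a_n(G_n)\le\sum_{j<m} S_j + E_n,
\]
where $E_n$ is a sum of at most $m+1$ independent passage times between pairs of vertices of $G_n$ at graph distance $\le d_n$. Dominating each correction by the weight of a fixed geodesic and using finite second moment of $\go$ gives $\|E_n-\E E_n\|_2=O(\sqrt{m d_n})=O(\sqrt{nd_n/L})$, which is $o(\sqrt{n})$ as soon as $L\gg d_n$. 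The unrestricted optimum $a_n(G_n)$ is realized by a path lying inside $[0,n]\times G_n$ with probability tending to one by a standard admissibility-based tail estimate on geodesic lengths, and $t_n(G_n)$ is sandwiched trivially.

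\emph{Rosenthal bound for a single slab.} The analytic core of the argument is the pair
\[
c_1 L\le\var(S_0)\le c_2 L,\qquad \E|S_0-\E S_0|^p\le C L^{p/2},
\]
valid in the range of $L$ dictated by the hypotheses. The variance lower bound is the content of Section~\ref{sec:varlbd} (cf.~Proposition~\ref{prop:mvbd}); the variance upper bound is a Kesten-type Efron--Stein estimate using admissibility. The $p$-th central moment bound I would prove by recursion: decompose a slab of length $L$ into sub-slabs of length $L'$ with $d_n\le L'\ll L$, write $S_0=\sum_{i} S_{0,i}' + R_0$ with $S_{0,i}'$ i.i.d.\ sub-slab crossings and $R_0$ an analogous boundary-pasting remainder, and apply Rosenthal's inequality to the centered independent sum $\sum_i(S_{0,i}'-\E S_{0,i}')$ together with a direct moment estimate on $R_0$ from $\E[\go^p]<\infty$. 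This expresses the $p$-th moment at scale $L$ in terms of its counterpart at scale $L'$; iterating the reduction (roughly $\lfloor p/2\rfloor-1$ times for $p\ge 4$ and once for $2<p<4$) terminates at a base case where the moment bound is verifiable directly from Kesten--Talagrand-type concentration on a single short admissible cylinder. The $\theta$-dependent loss incurred at each iteration (needed to keep both $d_n\ll L'$ and $m':=L/L'$ large enough for Rosenthal to be useful) is what produces the $\theta\cdot\min\{2p/(p-2),1/(\lfloor p/2\rfloor-1)\}$ term in the admissible range of $\alpha$.

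\emph{Lyapunov CLT and wrap-up.} The previous steps imply that the triangular array $\{S_j\}_{j<m}$ satisfies Lyapunov's condition
\[
\frac{\sum_{j<m}\E|S_j-\E S_j|^p}{\bigl(\sum_{j<m}\var S_j\bigr)^{p/2}} = O\bigl(m^{1-p/2}\bigr)\To 0,
\]
giving $(\sum_{j<m} S_j - m\E S_0)/\sqrt{m\var S_0}\weakc N(0,1)$. Combining with the sandwich and the matching lower bound $\var(a_n(G_n))\ge cn/d_n^\theta$ yields the CLT for all three of $a_n(G_n)$, $t_n(G_n)$, and $T_n(G_n)$. Case~A ($G_n=G$ fixed) is considerably simpler: $d_n$ is constant, no recursion is needed in the slab moment bound, which follows from direct concentration on a fixed quasi-one-dimensional strip, and only $\E[\go^p]<\infty$ for some $p>2$ is used. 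The main obstacle is the recursive slab moment bound; it forces the precise dependence of $\alpha$ on $p$ and $\theta$, while all other ingredients (block independence, boundary pasting, Lyapunov CLT, variance lower bound) are standard once that moment bound is in hand.
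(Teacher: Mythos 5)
Your overall architecture (slab decomposition at scale $L=\lfloor n^\beta\rfloor$, the sandwich $\sum_j S_j\le T_n(G_n)\le \sum_j S_j+E_n$ with an $O(md_n)$ pasting error, reduction of $a_n$ and $t_n$ to $T_n$, and a Lyapunov CLT for the i.i.d.\ array) is exactly the paper's. But there is a genuine gap in the wrap-up: the Lyapunov ratio is \emph{not} $O(m^{1-p/2})$. The available single-slab bounds are mismatched: the central moment upper bound is $\E|S_0-\E S_0|^p\le CL^{p/2}$ while the variance lower bound is only $\var(S_0)\ge cL/k_n$, so the ratio is $O(k_n^{p/2}m^{1-p/2})$, and the factor $k_n^{p/2}=O(n^{p\theta\alpha/2})$ is precisely the obstruction. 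Chasing this through (together with the constraint $\alpha\le(2\beta-1)/(2+\theta)$ needed to kill the pasting error relative to $\sqrt{\var T_n}$) only yields the range $\alpha<1/(2+\theta+2p\theta/(p-2))$, i.e.\ the $2p/(p-2)$ branch of the minimum; your plan as written never reaches the $1/(\lfloor p/2\rfloor-1)$ branch that the theorem claims for $p\ge4$.

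Relatedly, you have mislocated where the recursion does its work. The paper proves the bound $\E|\tilde T_L|^p\le CL^{p/2}$ in one shot (an Efron--Stein-type martingale argument counting ``essential edges,'' plus Kesten's geodesic-length lemma --- not Talagrand, which would require exponential moments the theorem does not assume). The renormalization is instead applied \emph{inside the Lyapunov verification}: writing $\tilde T_{l_i}$ as an i.i.d.\ sum of $\tilde T_{l_{i+1}}$'s plus error and applying the Rosenthal-type inequality, the dominant term becomes $(m_{i+1}\sigma^2_{l_{i+1}})^q\approx\sigma_{l_i}^{2q}$ --- the \emph{actual} variance to the $q$-th power, which telescopes correctly against the normalization $s_n^{2q}=(m_1\sigma_{l_1}^2)^q$ and costs a factor $m_1^{1-q}=o(1)$, rather than the upper bound $l_i^q$ which costs the fatal $k_n^q$. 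Your recursion, which only reproduces $C(L')^{p/2}\mapsto CL^{p/2}$ at each scale, cannot capture this gain. Also, the recursion depth is not $\lfloor p/2\rfloor-1$: in the paper $t$ is arbitrary and is sent to infinity, with the admissible $\alpha$ improving to $(q-1)/(\theta+(q-1)(2+\theta))$ in the limit; the depth needed depends on how close $\alpha$ is to the critical value, not on $p$ alone. Case A and the $2<p<4$ case of your write-up are fine.
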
  

Clearly, this theorem implies Theorem \ref{thm:mainfpp1} by taking $G_n=[-h_n,h_n]^{d-1}$ with $d_n=2h_n(d-1)^{1/2}$ and $\theta=d-1$. Throughout the rest of the paper we will consider the case of general sequence $G_n$. 

As we remarked earlier we do not have explicit formulas for the mean and the variance of $a_n(G_n)$. The following result is the generalization of the `mean part' of  Proposition \ref{prop:mvbd}. 
\begin{proposition}\label{prop:meanbd}
Consider the setup introduced above. Then the limit
	$$
		\nu:=\lim_{n\to\infty} \frac{1}{n}\E[a_n(G_n)]
	$$                          
	exists and we have
	\[
		\nu n \le \E[a_n(G_n)] \le \mu n    \text{ for all } n.
	\] 
Moreover, $\nu>0$ if $G_n=G$ for all $n\ge 1$ or $G_n$'s are subgraphs of $\dZ^{d-1}$ and $F(0)<p_c(d)$.  In particular, when $G_n=[-h_n,h_n]^{d-1}$
and $h_n\to\infty$ as $n\to\infty$, we have $\nu = \nu(\mve_1)$, where $\nu(\mve_1)$ is defined as in \eqref{hw}. 
We also have 
\[
	 \E[a_n(G_n)]\le \E[t_n(G_n)] \le \E[T_{n}(G_n)] + 2\mu d_n \le \E[a_n(G_n)] + 2\mu d_n
\]                                                                 
for all $n$.
\end{proposition}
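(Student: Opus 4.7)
I would first show that $f(n) := \E[a_n(G_n)]$ is subadditive. Given an optimal path $\cP_1$ for $a_n(G_n)$ in $\dZ \times G_n$ and an optimal path $\cP_2$ from $(n, o)$ to $(n + m, o)$ in the shifted cylinder $\dZ \times G_m$, the concatenation $\cP_1 \circ \cP_2$ is a walk in $\dZ \times G_{n+m}$ joining $(0, o)$ to $(n + m, o)$, since $G_n, G_m \subseteq G_{n+m}$. Nonnegativity of the weights (extracting a simple path loses nothing) gives $a_{n+m}(G_{n+m}) \le \go(\cP_1) + \go(\cP_2)$, and translation invariance yields $\E[\go(\cP_2)] = f(m)$. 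Hence $f(n+m) \le f(n) + f(m)$, and Fekete's subadditive lemma produces $\nu = \lim f(n)/n = \inf f(n)/n$, so $\nu n \le f(n)$. The bound $f(n) \le \mu n$ is immediate from the axial path $(0, o) \to (1, o) \to \cdots \to (n, o)$.

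\textbf{Positivity of $\nu$.} For Case A ($G_n = G$ fixed), any path from $(0, o)$ to $(n, o)$ in $\dZ \times G$ makes net first-coordinate displacement $n$, so for each $k = 0, \ldots, n-1$ it must contain at least one (distinct) horizontal edge joining the slabs $\{k\}\times G$ and $\{k+1\}\times G$. Summing slab-wise lower bounds, $a_n(G) \ge \sum_{k=0}^{n-1} M_k$ where $M_k$ is the minimum of $|V(G)|$ i.i.d.\ copies of $F$; nondegeneracy of $F$ on $[0, \infty)$ forces $\E[M_0] > 0$, so $\nu \ge \E[M_0] > 0$. For Case B, the inclusion $\dZ \times G_n \subseteq \dZ^d$ gives $a_n(G_n) \ge a(\mvzero, n\mve_1)$, and the hypothesis $F(0) < p_c(d)$ combined with Kesten's theorem supplies $\nu \ge \nu(\mve_1) > 0$.

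\textbf{Identification $\nu = \nu(\mve_1)$ when $h_n \to \infty$.} Case B gives the lower bound. For the upper bound, I will fix $N$ and exploit monotonicity: for $n \ge N$, $G_N \subseteq G_n$ gives $a_n(G_n) \le a_n(G_N)$, so $\nu \le \tilde{\nu}(h_N) := \lim_n \E[a_n(G_N)]/n$, which exists and equals the subadditive infimum by the first paragraph applied to the constant sequence $G_N$. A second application of subadditivity yields $\tilde{\nu}(h_N) \le \E[a_M(G_N)]/M$ for every $M$. For fixed $M$, $a_M([-h, h]^{d-1})$ is decreasing in $h$ and converges almost surely to $a(\mvzero, M\mve_1)$, since the $\dZ^d$-geodesic from $\mvzero$ to $M\mve_1$ is a.s.\ finite and eventually fits in the cylinder; dominated convergence (with integrable majorant $\sum_{i=0}^{M-1} \omega_i$, the axial-path weight) gives $\lim_{h_N \to \infty} \E[a_M(G_N)] = \E[a(\mvzero, M\mve_1)]$. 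Hence $\limsup_{N} \tilde{\nu}(h_N) \le \E[a(\mvzero, M\mve_1)]/M$ for every $M$, and the Hammersley-Welsh limit as $M \to \infty$ closes the bound at $\nu(\mve_1)$. This nested double-limit is the delicate step, since it avoids any need for quantitative control on transversal fluctuations of the $\dZ^d$-geodesics.

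\textbf{The chain $\E[a_n] \le \E[t_n] \le \E[T_n] + 2\mu d_n \le \E[a_n] + 2\mu d_n$.} The first inequality is immediate, as every $t_n$-admissible path is $a_n$-admissible. For the second, I would take an optimal $T_n$-path, which runs from some $(0, u)$ to some $(n, v)$, and prepend a deterministic shortest graph-distance path from $(0, o)$ to $(0, u)$ inside $\{0\} \times G_n$, then append one from $(n, v)$ to $(n, o)$ inside $\{n\} \times G_n$; the inserted edges lie in the two side faces, whose weights are independent of everything determining $T_n$, so each extension contributes at most $\mu d_n$ in expectation. For the third, given any $a_n$-path $\cP$ from $(0, o)$ to $(n, o)$ in $\dZ \times G_n$, I would let $i_0$ be the last index along $\cP$ whose first coordinate is $\le 0$ and $i_1$ the first later index whose first coordinate equals $n$; between these indices the first coordinate is confined to $[0, n]$ (otherwise the path would reach $x_1 = n$ before $i_1$), so $\cP[i_0 : i_1]$ is a valid $T_n$-path of weight at most $\go(\cP)$ by nonnegativity, yielding $T_n(G_n) \le a_n(G_n)$.
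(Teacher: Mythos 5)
Your proof is correct, and for most of the statement it follows the paper's own route: existence of $\nu$ via subadditive concatenation, the upper bound $\mu n$ from the axial path, positivity for fixed $G$ via the slab-by-slab minimum of $|V(G)|$ i.i.d.\ weights, positivity in the $\dZ^{d-1}$ case via $a_n(G_n)\ge a(\mvzero,n\mve_1)$, and the chain $\E[a_n]\le\E[t_n]\le\E[T_n]+2\mu d_n\le\E[a_n]+2\mu d_n$ by prepending/appending face paths and extracting a side-to-side sub-path. The genuine departure is in the identification $\nu=\nu(\mve_1)$ for $G_n=[-h_n,h_n]^{d-1}$, $h_n\to\infty$. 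The paper's argument (Lemma~\ref{lem:posmean}) tiles $[0,n]\times G_n$ with cylinders of length $\lfloor l_n/C\rfloor$, $l_n=\min\{n^{1/2},h_n\}$, controls $\E[a_n(G_n)]$ by a sub-block quantity $X(\cdot,\cdot)$, identifies its per-unit limit $\ga(C)$ by subadditivity, and then sends $C\to\infty$ via monotone convergence to recover the unrestricted cylinder time $t(\mvzero,n\mve_1)$. You replace this blocking by a nested double limit built entirely on monotonicity: fix $N$, note $a_n(G_n)\le a_n(G_N)$ for $n\ge N$ by graph containment, pass to the constant-graph limit $\tilde\nu(h_N)$, bound $\tilde\nu(h_N)\le\E[a_M(G_N)]/M$ by subadditivity, send $N\to\infty$ for each fixed $M$ via dominated convergence (majorant $S_M$) to obtain $\E[a(\mvzero,M\mve_1)]/M$, and finally let $M\to\infty$. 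This is cleaner and more conceptual: it never needs the intermediate object $X(n,h)$ or the scale $l_n$, and it isolates exactly where monotonicity and dominated convergence do the work. Two cosmetic remarks: the phrase ``the $\dZ^d$-geodesic $\ldots$ is a.s.\ finite'' is not quite needed (nor always available without extra hypotheses) --- it suffices that for every $\eps>0$ there is a finite near-optimal path which eventually fits in the cylinder; and in the last paragraph ``first coordinate $\le 0$'' at $i_0$ is automatically $=0$ since the path ends at $x_1=n$, which is worth saying explicitly. Neither affects correctness.
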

 
Now let us state the upper and lower bounds for the variance of $a_n(G_n)$, i.e.\ the `variance part' of Proposition \ref{prop:mvbd}.
\begin{prop}\label{prop:varbd}
	 Under the condition of Theorem~\ref{thm:maingr} we have
	\begin{align*}
	   c_1 \frac{n}{k_n} \le \var(a_n(G_n)) \le c_2 n  
	\end{align*}
	for some positive constants $c_1$, $c_2$ that do not depend on $n$. Moreover, $\lim_{n\to\infty}\var(a_n(G_n))/n$ exists for all non-degenerate distribution $F$ on $[0,\infty)$ when $G_n=G$ for all $n$. The above results hold for $t_n(G_n)$ and $T_{n}(G_n)$.  
\end{prop}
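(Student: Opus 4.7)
The proposition has three parts: the upper bound $\var(a_n(G_n))\le c_2 n$, the lower bound $\var(a_n(G_n))\ge c_1 n/k_n$ (the main technical step), and, for $G_n\equiv G$, convergence of $n^{-1}\var(a_n(G))$. The extensions to $t_n(G_n)$ and $T_n(G_n)$ follow from the same proofs, since Proposition~\ref{prop:meanbd} shows that their means differ from $\E[a_n(G_n)]$ by $O(d_n)$, negligible against the $\Theta(n)$ variance bounds, and the Efron--Stein/Hoeffding arguments treat all three passage times identically.

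\textbf{Upper bound.} By the Efron--Stein inequality,
\[
\var(a_n(G_n))\le \tfrac12\sum_e\E[(a_n-a_n^{(e)})^2],
\]
where $a_n^{(e)}$ denotes $a_n$ with the weight $\omega_e$ replaced by an independent copy $\omega_e'$. Since $|a_n-a_n^{(e)}|\le|\omega_e-\omega_e'|$ and the two coincide whenever $e$ lies outside the geodesic $\pi$ and its perturbation, a symmetry step yields $\E[(a_n-a_n^{(e)})^2]\le 2\E[(\omega_e-\omega_e')^2\mathbf{1}_{e\in\pi}]$. Expanding the square, using $\omega_e\ge 0$, and applying the FKG negative-correlation bound $\E[\omega_e^2\mathbf{1}_{e\in\pi}]\le\E[\omega^2]\pr(e\in\pi)$ (valid because $\omega_e^2$ is increasing and $\mathbf{1}_{e\in\pi}$ is decreasing in $\omega_e$ with the other weights held fixed), one sees that the right side is bounded by a constant multiple of $\pr(e\in\pi)$. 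Summing over $e$ reduces everything to the estimate $\E[|\pi|]=O(n)$, which follows from $\E[a_n]\le\mu n$ (Proposition~\ref{prop:meanbd}) and Kesten's admissibility bound on the minimum weight of long paths.

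\textbf{Lower bound --- the main obstacle.} The strategy is a block decomposition combined with the Hoeffding variance inequality for functions of independent coordinates. Choose $L$ proportional to $d_n$ (sufficiently large) and set $m=\lfloor n/L\rfloor\ge c\,n/k_n$ (using $d_n\le k_n$ since $G_n$ is connected). Partition the edges of $[0,n]\times G_n$ into the slab-interior sets $J_j$ for $j=1,\dots,m$, and the cross-section set $\cB$ of vertical edges at levels $x\in\{0,L,\dots,mL\}$; conditional on $\omega_{\cB}$, the bundles $\omega_{J_j}$ are mutually independent, so the Hoeffding decomposition yields
\[
\var(a_n\mid\omega_{\cB})\ \ge\ \sum_{j=1}^m\var\bigl(\,\E[a_n\mid\omega_{\cB},\omega_{J_j}]\,\bigm|\,\omega_{\cB}\bigr).
\]
The heart of the argument is a uniform per-block lower bound $\E[\var(\E[a_n\mid\omega_{\cB},\omega_{J_j}]\mid\omega_{\cB})]\ge c>0$, saying that each slab contributes a positive-variance ``main effect.'' One would establish it by exhibiting, with positive conditional probability, a pair of competing subpaths through slab $j$ whose weights both lie in a fixed window and depend sensitively on the weight of a distinguished edge $e_j\in J_j$, so that non-degeneracy of $F$ forces the conditional expectation to vary by a fixed positive amount as $\omega_{e_j}$ is perturbed over the support of $F$. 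The width $L\asymp d_n$ is used here to afford the slab enough room for such an explicit coupling. This per-block estimate is the delicate step; once it is in place, summing over $j$ and unconditioning yields $\var(a_n(G_n))\ge c_1\,n/k_n$.

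\textbf{Limit for fixed $G$.} When $G_n\equiv G$, first-passage percolation on $\dZ\times G$ has a natural regenerative structure: the $\dR^{|V(G)|}$-valued Markov chain tracking the vector of passage times from $(0,o)$ to the cross-section $\{x\}\times G$, quotiented by constant-in-$v$ shifts, is positive-recurrent under non-degeneracy of $F$ and connectedness of $G$. The resulting i.i.d.\ renewal increments $(\Delta\tau_i,Y_i)$ have finite $L^2$ moments (from $\E[\omega^2]<\infty$), and the renewal CLT then gives $\var(a_n(G))/n\to\sigma^2(G)\in(0,\infty)$; positivity comes from the lower bound just proved.
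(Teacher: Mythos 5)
Your upper bound is essentially the paper's argument (Efron--Stein plus Kesten's Lemma~\ref{lem:plen} to control the geodesic length), and that part is fine. The problem is the lower bound, where your proof has a genuine gap at exactly the step you flag as ``delicate'': the claimed uniform per-block estimate $\E[\var(\E[a_n\mid\omega_{\cB},\omega_{J_j}]\mid\omega_{\cB})]\ge c>0$ is not only unproven, it is almost certainly false in the generality required. With slabs of width $L\asymp d_n$ you have $m\asymp n/d_n$ blocks, so a constant main effect per block would give $\var(a_n(G_n))\ge c\,n/d_n$. For $G_n=[-h_n,h_n]^{d-1}$ with $d\ge 3$ this is $n/h_n$, which is strictly larger in order than $n/h_n^{d-1}=n/k_n$ --- the order the authors conjecture (Conjecture~\ref{conj:lbd}) to be the \emph{true} size of the variance. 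Equivalently: each block's main effect is essentially the variance of a side-to-side crossing time of a box of side $\asymp d_n$, and that crossing time is a minimum over $\asymp k_n$ boundary vertices; there is no reason its variance stays bounded away from $0$ as $k_n\to\infty$, and the heuristic of a ``distinguished edge $e_j$'' fails because the geodesic uses any given edge of a cross-section with probability only $O(1/k_n)$. So the block decomposition does not circumvent the need for a variance lower bound on a single (smaller) box --- it merely relocates it --- and your sketch supplies no such bound.

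The paper's route is quite different and is worth knowing. Writing $t=t_n(G)$ and using the Doob decomposition $\var(t)=\sum_i\var(\E[t\mid\cF_i]-\E[t\mid\cF_{i-1}])$, each summand is bounded below by $(\E[\gD_i])^2$ where $\gD_i=\E[\ind_{\{\go_i>\eta\}}(t(\mvgo)-t(\hat{\mvgo}^i,\eta))\mid\mvgo]$ and $\eta\sim F$ is an independent resample; Cauchy--Schwarz then gives $\var(t)\ge N^{-1}(\sum_i\E[\gD_i])^2$ with $N\le 3nk$. The key observation is that on the geodesic $\cP_*$ one has $\gD_i\ge h(\go_i)$ with $h(x)=\E[(x-\eta)_+]$, so $\sum_i\gD_i$ dominates the first-passage time $t(\mvgo')$ computed with the transformed weights $\go_i'=h(\go_i)$. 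Admissibility of $F$ gives $F'(0)=\pr(\go=\gl)<p_c(d)$, hence $\E[t(\mvgo')]\ge\nu'n$ with $\nu'>0$ by the mean estimates of Section~\ref{sec:mean}, and $\var(t)\ge (\nu')^2 n/(3k)$ follows. This converts the variance lower bound into a positivity-of-time-constant statement rather than a per-block anticoncentration statement, which is what makes the $n/k_n$ rate accessible. Separately, for the fixed-$G$ limit the paper does not use a regenerative/Markov-chain argument but an approximate subadditivity relation for $\gs_n=\sqrt{\var(T_n(G))}$ (Corollary~\ref{cor:mvest}), showing $\tilde\gs_{2^k}$ is Cauchy and that the full sequence has the same limit; your renewal sketch would need a positive-recurrence and moment analysis that you have not supplied.
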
     
     
In fact when $G_n=G$ for all $n\ge 1$, we can say much more as in Proposition~\ref{prop:bmh}.   
Define 
\begin{align}
	   \mu(G)&:=\lim_{n\to\infty} \frac{\E[a_n(G)]}{n} 
	\text{ and } \gs^2(G):=\lim_{n\to\infty}\frac{\var(a_n(G))}{n}.
\end{align} 
Existence and positivity of the limits follow from Propositions~\ref{prop:meanbd} and \ref{prop:varbd}. Consider the 
continuous process $X(\cdot)$ defined by  $X(n)=t_n(G)-n\mu(G)$ for $n\ge 0$ and extended by linear interpolation. Then we have the following result. 
\begin{prop}\label{prop:bm}
	Assume that the generic edge weight $\go$ is non-degenerate and satisfies  $\E[\go^p]<\infty$ for some $p>2$. Then the scaled process $$\{ (n\gs^2(G))^{-1/2} X(nt) \}_{ t\ge 0}$$ converges in distribution to the standard Brownian motion as $n\to\infty$. 
\end{prop}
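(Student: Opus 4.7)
The plan is to establish Donsker's invariance principle for the rescaled process $W_n(t):=X(nt)/\sqrt{n\gs^2(G)}$ (linearly interpolated in $nt$), by verifying (i) convergence of finite-dimensional distributions to those of standard Brownian motion and (ii) tightness in $C[0,T]$ for every $T>0$.

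Both steps hinge on a near-additivity and independence property of the side-to-side times. For $0\le a<c<b$, decomposing any left-to-right crossing of $[a,b]\times G$ at its first visit to column $c$ gives $T_{a,c}(G)+T_{c,b}(G)\le T_{a,b}(G)$, while concatenating optimal $T_{a,c}$ and $T_{c,b}$ paths and joining their endpoints by a shortest path within $\{c\}\times G$ gives the reverse inequality up to a random connector supported on edges of column $c$, with bounded mean and all moments finite (because $G$ is fixed). As noted after \eqref{def:TabG}, $T_{a,b}(G)$ uses only edges strictly between columns $a$ and $b$, so $T_{a,b}$'s over disjoint intervals are independent. Since $|t_n(G)-T_n(G)|=O_p(1)$ as well, it suffices to prove the claim for the process $\tilde X(n):=T_n(G)-n\mu(G)$ in place of $X(n)$.

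For finite-dimensional convergence, fix $0=t_0<t_1<\cdots<t_k$, put $n_i=\lfloor nt_i\rfloor$, and $U_i^{(n)}:=T_{n_{i-1},n_i}(G)$. These are mutually independent, and by Theorem~\ref{thm:maingr}(A) applied to each block (whose length tends to infinity) together with Proposition~\ref{prop:varbd},
\[
\frac{U_i^{(n)}-(t_i-t_{i-1})n\mu(G)}{\sqrt{n\gs^2(G)}}\weakc\sqrt{t_i-t_{i-1}}\,Z_i,
\]
for i.i.d.\ $Z_i\sim N(0,1)$. Iterating the near-additivity across the $k$ blocks yields $\tilde X(nt_i)-\tilde X(nt_{i-1})=U_i^{(n)}-(t_i-t_{i-1})n\mu(G)+O_p(1)$; after dividing by $\sqrt{n\gs^2(G)}$ the $O_p(1)$ corrections vanish, and independence of the $U_i^{(n)}$'s then delivers the Gaussian increment structure of Brownian motion.

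For tightness I would rely on the central moment estimates of Section~\ref{sec:pfmbd} to obtain $\E|T_{ns,nt}(G)-(t-s)n\mu(G)|^p\le C(n(t-s))^{p/2}$ uniformly in $0\le s<t$ with $n(t-s)\ge 1$, for the same $p>2$ as in the hypothesis. Combining this with near-additivity gives $\E|W_n(t)-W_n(s)|^p\le C|t-s|^{p/2}$ for $|t-s|\ge 1/n$ (linear interpolation handles $|t-s|<1/n$), and since $p/2>1$, Kolmogorov's tightness criterion applies on $C[0,T]$. The chief technical obstacle, in my view, is to extract a \emph{sharp and uniform} $p$-th central moment bound for the side-to-side times from Section~\ref{sec:pfmbd}, and to verify that the accumulated $O_p(1)$ connector corrections across many blocks do not spoil either the Gaussian scaling in step (i) or the Kolmogorov estimate in step (ii).
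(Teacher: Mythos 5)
Your proposal is correct and follows essentially the same route as the paper's proof of Lemma~\ref{lem:bm}: reduce to the side-to-side times $T_n(G)$ via Lemma~\ref{lem:p2s}, exploit the independence of $T_{a,b}(G)$ over disjoint intervals together with the near-additivity from Lemma~\ref{lem:ulbd} to get finite-dimensional convergence from Theorem~\ref{thm:maingr}, and use the $p$-th central moment bound of Proposition~\ref{prop:mbd} for tightness. The only cosmetic difference is the tightness criterion: the paper uses Billingsley's two-increment product criterion (Lemma~\ref{lem:tight}) with $\gl=p/4$ and a Cauchy--Schwarz step, while you invoke the single-increment Kolmogorov criterion directly; both require $p>2$ and deliver the same bound $\E|W_n(t)-W_n(s)|^p\le C|t-s|^{p/2}$.
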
     

%
%

\section{Estimates for the mean} 
\label{sec:mean}

In this section we will prove Proposition~\ref{prop:meanbd}. We will break the proof into several lemmas. Lemma~\ref{lem:p2s} shows that the random variables $a_n(G_n),t_n(G_n)$ and $T_{n}(G_n)$ are close in $L^p$ norm when the diameter $d_n$ of $G_n$ is small.  Note that the maximum weight over all self avoiding paths in $G_{n}$ is of the order of $d_{n}$.                             

\begin{lemma}\label{lem:p2s} 
	We have  
	\[
	T_{n}(G_n)\le a_n(G_n) \le t_{n}(G_n) \text{ for all } n.
	\]
	Moreover we have 
	\begin{align*}
	    \E[|t_n(G_n)-T_{n}(G_n)|^p]\le 2^pd_n^p \E[\go^p] \text{ for all } n\ge 1
	\end{align*}  
	when $\E[\go^p]<\infty$ for some $p\ge 1$ and a typical edge weight $\go\sim F$.
\end{lemma}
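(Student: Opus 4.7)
First, I would establish the three-way comparison. The inequality $a_n(G_n) \le t_n(G_n)$ is immediate because the infimum defining $a_n(G_n)$ is taken over a strictly larger family of paths. For $T_n(G_n) \le a_n(G_n)$, take any path $\cP$ from $(0,o)$ to $(n,o)$ in $\dZ \times G_n$, and let $\tau$ be the first time it visits a vertex whose first coordinate equals $n$, and let $\sigma$ be the last time up to $\tau$ at which it visits a vertex with first coordinate $0$. Because first coordinates change by at most one per step, the first coordinate along the sub-path between times $\sigma$ and $\tau$ can neither exceed $n$ nor drop below $0$ (any such excursion would force an intermediate visit to $\{0\}\times G_n$ or $\{n\}\times G_n$, contradicting the definitions of $\sigma$ or $\tau$). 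Hence this sub-path is a valid side-to-side path in $[0,n]\times G_n$ whose weight is at most $\go(\cP)$, and infimizing gives $T_n(G_n) \le a_n(G_n)$.

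For the moment bound, the plan is to extend an optimal side-to-side path into a valid path for $t_n(G_n)$ at the cost of at most $2d_n$ extra edge weights that are independent of $T_n(G_n)$. Fix, once and for all, a deterministic path $\pi_w$ in $G_n$ from $o$ to each $w\in G_n$ consisting of at most $d_n$ edges (which exists by the diameter bound). Let $\cP^*$ attain the minimum in the definition of $T_n(G_n)$ (the graph is finite), with endpoints $(0,U)$ and $(n,V)$ chosen by a deterministic tie-breaking rule. Concatenating the copy of $\pi_U$ inside $\{0\}\times G_n$, followed by $\cP^*$, followed by the copy of $\pi_V$ inside $\{n\}\times G_n$, yields a path from $(0,o)$ to $(n,o)$ in $[0,n]\times G_n$, so
\[
0 \le t_n(G_n) - T_n(G_n) \le W_U^{(0)} + W_V^{(n)},
\]
where $W_U^{(0)}$ and $W_V^{(n)}$ denote the weights of $\pi_U$ inside $\{0\}\times G_n$ and $\pi_V$ inside $\{n\}\times G_n$, respectively.

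The crucial point, already noted in the excerpt just after the definition of $T_{a,b}(G)$, is that an optimal side-to-side path need not use any edge inside the boundary slabs $\{0\}\times G_n$ or $\{n\}\times G_n$. Consequently $T_n(G_n)$ and the pair $(U,V)$ are measurable with respect to the edge weights outside those two slabs, and are therefore independent of the boundary-slab edge weights that constitute $W_U^{(0)}$ and $W_V^{(n)}$. Conditioning on $U$, the variable $W_U^{(0)}$ is a sum of at most $d_n$ i.i.d.\ copies of $\go$, so Jensen's inequality gives $\E[(W_U^{(0)})^p \mid U] \le d_n^{p-1}\cdot d_n \cdot \E[\go^p] = d_n^p\,\E[\go^p]$, and analogously for $W_V^{(n)}$. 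Combining this with the elementary inequality $(a+b)^p \le 2^{p-1}(a^p+b^p)$ yields
\[
\E[|t_n(G_n) - T_n(G_n)|^p] \le 2^{p-1}\bigl(\E[(W_U^{(0)})^p] + \E[(W_V^{(n)})^p]\bigr) \le 2^p d_n^p\,\E[\go^p],
\]
as required. I do not anticipate a substantial obstacle; the only step requiring care is the independence of $(U,V)$ from the boundary-slab edge weights, which is what makes the conditioning argument for the moment bound go through cleanly.
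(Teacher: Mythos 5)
Your proof is correct and follows essentially the same route as the paper's: the same choice of last-exit/first-entry times to extract a side-to-side subpath, the same extension of an optimal side-to-side path by deterministic geodesics in $\{0\}\times G_n$ and $\{n\}\times G_n$, and the same use of the independence of $(T_n(G_n),U,V)$ from the boundary-slab weights; the only cosmetic difference is that you split $W_U^{(0)}+W_V^{(n)}$ via $(a+b)^p\le 2^{p-1}(a^p+b^p)$ before applying Jensen, while the paper applies the power-mean bound directly to the full sum of at most $2d_n$ i.i.d.\ terms, and both give $2^p d_n^p\,\E[\go^p]$.
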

\begin{proof} 
    Fix any path $\cP$ from $(0,o)$ to $(n,o)$ in $\dZ\times G_n$. The path $\cP$ will hit $\{0\}\times G_n$ and $\{n\}\times G_n$ at some vertices. Let $(0,u)$ be the vertex where $\cP$ hits $\{0\}\times G_n$ the last time and $(n,v)$ be the vertex where $\cP$ hits $\{n\}\times G_n$ the first time after hitting $(0,u)$. The path segment of $\cP$ from $(0,u)$ to $(n,v)$ lies inside $[n]\times G_n$ and by non-negativity of edge weights we have $\go(\cP)\ge T_{n}(G_n)$. Since this is true for any path $\cP$ joining $(0,o)$ to $(n,o)$ in $\dZ\times G_n$, we have $T_{n}(G_n)\le a_n(G_n)$.   

Clearly $a_n(G_n)\le t_{n}(G_n)$. Combining the two inequalities, we see that 
\[
T_{n}(G_n)\le a_n(G_n) \le t_{n}(G_n) \text{ for all } n.
\]
Since the number of paths joining the left side $\{0\}\times G_n$ to the right side $\{n\}\times G_n$ in $[0,n] \times G_n$ is finite there is a path achieving the minimal weight $T_{n}(G_n)$. Choose such a path $\cP^*$ using a deterministic rule. Suppose that the path $\cP^*$ starts at $(0,u)$ and ends at $(n,w)$. As we remarked earlier in Section \ref{sec:main} the random variables $T_{n}(G_n),\cP^*,u,w$ are independent of the edge weights $\go_e$ where $e$ is an edge in $\{0\}\times G_n$ or $\{n\}\times G_n$. 

Let $\cP(u),\cP(w)$ be some minimal length paths in $G_n$ joining $o,u$ and $o,w$ respectively. We have
$
 t_{n}(G_n) - T_{n}(G_n) \le S_n
$
where $S_n$ is the sum of edge weights in the paths $\{0\}\times\cP(u)$ and  $\{n\}\times\cP(w)$ and hence
\[
\E[|t_n(G_n)-T_{n}(G_n)|^p]\le \E[S_n^p].
\]   
Moreover by independence of $u,w$ and the edge weights in $\{0,n\}\times G_n$ we have $\E[S_n^p|u,w]\le (|\cP(u)|+|\cP(w)|)^p\E[\go^p]$. By definition of diameter we have $|\cP(u)|+|\cP(w)|\le 2d_n$ and thus we are done.         
\end{proof}
 
The following lemma combined with Lemma~\ref{lem:p2s} completes half of the proof of Proposition~\ref{prop:meanbd}. Recall that $\{G_n\}$ is a nondecreasing sequence of finite connected graphs.

\begin{lemma}\label{lem:meanbd}
	   The limit
\[
		\nu=\lim_{n\to\infty} \frac{\E[a_n(G_n)]}{n}
\]                         
exists and we have
\[
		\nu n  \le \E[a_n(G_n)] \le \mu n \text{ for all } n.
\] 
Moreover, we have $\nu<\mu$ if $d_n\ge 1$ and $F$ is non-degenerate.
\end{lemma}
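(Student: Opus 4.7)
The plan is to prove that $f(n):=\E[a_n(G_n)]$ is subadditive in $n$; the rest of the lemma will then follow from Fekete's lemma together with an explicit detour comparison for the strict inequality. The upper bound $f(n)\le\mu n$ is immediate from the straight path $(0,o)\to(1,o)\to\cdots\to(n,o)$, which uses $n$ edges of mean weight $\mu$.

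For the subadditivity, I would pick (by a deterministic tie-breaking rule) a weight-minimizing path $\cP_1$ for $a_n(G_n)$ in $\dZ\times G_n$ from $(0,o)$ to $(n,o)$, and a weight-minimizing path $\cP_2$ from $(n,o)$ to $(n+m,o)$ in the shifted graph $(\dZ+n)\times G_m$ evaluated against the original edge weights. By translation invariance of the i.i.d.\ weights, $\go(\cP_2)\equald a_m(G_m)$, so $\E[\go(\cP_2)]=f(m)$. Because $G_n,G_m\subseteq G_{n+m}$, the concatenation of $\cP_1$ with $\cP_2$ is a walk in $\dZ\times G_{n+m}$ from $(0,o)$ to $(n+m,o)$; by nonnegativity of the edge weights this walk contains a simple path of weight at most $\go(\cP_1)+\go(\cP_2)$, so $a_{n+m}(G_{n+m})\le\go(\cP_1)+\go(\cP_2)$ pointwise. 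Taking expectations yields $f(n+m)\le f(n)+f(m)$, and Fekete's lemma then gives $f(n)/n\to\nu:=\inf_n f(n)/n$, with $\nu n\le f(n)\le\mu n$ for every $n$.

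For the strict inequality $\nu<\mu$, fix $n$ large enough that $d_n\ge 1$, and let $o'\in G_n$ be a neighbor of $o$. The detour $(0,o)\to(0,o')\to(1,o')\to\cdots\to(n,o')\to(n,o)$ has length $n+2$ and uses edges disjoint from those of the straight path, so its weight $Y$ (a sum of $n+2$ i.i.d.\ copies of $\go$) is independent of the straight-path weight $X$. Hence $f(n)\le\E[\min(X,Y)]=n\mu-\E[(X-Y)_+]$. Since $F$ is nondegenerate, $X-Y$ has mean $-2\mu$ and variance $(2n+2)\gs^2>0$, so by the central limit theorem $\pr[X>Y]\to 1/2$ as $n\to\infty$; in particular $\E[(X-Y)_+]>0$ for all sufficiently large $n$, which gives $\nu\le f(n)/n<\mu$. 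The delicate step is the subadditivity argument: one is tempted to insist on independence by restricting to cylinder paths, but the key observation is that a bound on expectations needs only the pathwise inequality (obtained from the concatenated walk by pruning cycles via nonnegativity) together with linearity of expectation, regardless of whether the two minimizing paths happen to overlap.
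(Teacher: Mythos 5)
Your proposal is correct, and the core of it --- the upper bound via the straight path, the pathwise concatenation bound $a_{n+m}(G_{n+m})\le a_n(G_n)+a_{n,n+m}(G_m)$ using $G_n,G_m\subseteq G_{n+m}$ and translation invariance, followed by Fekete's lemma --- is exactly the paper's argument; your explicit remark that only linearity of expectation (not independence of the two minimizing paths) is needed, and that the concatenated walk can be pruned to a path by nonnegativity, is the right way to make the paper's one-line "joining the minimal weight paths" rigorous.

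Where you genuinely diverge is the strict inequality $\nu<\mu$. The paper reduces to the one-edge graph and compares, within each block of length $2$, the straight segment against a local $4$-edge detour, arriving at $\E[a_{2n}(G)]\le \mu n+n\,\E[\min\{\go_1,\go_2+\go_3+\go_4\}]$; this yields strictness only when $\pr(\go_2+\go_3+\go_4<\go_1)>0$, which can fail for nondegenerate $F$ (e.g.\ $F$ supported on $\{1,2\}$, where $\go_2+\go_3+\go_4\ge 3>2\ge\go_1$ always). Your version instead compares the straight path with a single edge-disjoint parallel detour of length $n+2$ and uses $\E[\min(X,Y)]=n\mu-\E[(X-Y)_+]$ together with $\pr(X>Y)\to 1/2$; since the two sums differ only by two extra terms while their fluctuations are of order $\sqrt{n}$, strictness holds for every nondegenerate $F$ (and even the CLT can be replaced by a support argument --- pick $a<b$ in the support of $F$ and force $X$ near $nb$ and $Y$ near $(n+2)a$ --- if one does not want to assume $\gs^2<\infty$). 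So your global comparison buys uniformity over all nondegenerate edge-weight laws, at the cost of being asymptotic in $n$ rather than giving an explicit per-block gap.
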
 
\begin{proof}
		Considering the straight line path from $(0,o)$ to $(n,o)$ it is easy to see that $\E[a_{n}(G_n)]\le \mu n$.                           
The existence of the limit is easily obtained from subadditivity as follows. Fix $n,m$. Consider $G_n$ and $ G_m$ as subgraphs of  $G_{n+m}$. Let  $a_{n,n+m}(G_m)$ denote the  first-passage time in $\dZ\times G_m$ from $(n,o)$ to $(n+m,o)$. Clearly $a_{n,n+m}(G_m)\equald a_m(G_m)$. Joining the minimal weight paths from $(0,o)$ to $(n,o)$ achieving the weight $a_n(G_n)$ and from $(n,o)$ to $(n+m,o)$ achieving the weight $a_{n,n+m}(G_m)$, we get a path in $\dZ\times G_{n+m}$ from $(0,o)$ to $(n+m,o)$. Clearly
	\begin{align*}
		a_{n+m}(G_{n+m}) &\le a_{n}(G_n)  + a_{n,n+m}(G_m).
	\end{align*} 
	Now taking expectation in both sides and using the subadditive lemma we have
	\[
		\nu  := \lim_{n\to\infty}\frac{\E[a_n(G_n)]}{n}
	\]  
	exists and equals $\inf_{n\ge 1}{\E[a_n(G_n)]}/{n}$.

	To show that $\nu<\mu$ it is enough to consider the one edge graph $G_n=G=\{0,1\}$ and $n$ even. Consider the following two paths from $(0,0)$ to $(2n,0)$. One is the straight line path. The other is the path connecting $(0,0),(0,1),(1,1),(1,0),(2,0)$ and repeating the same pattern. Clearly we have $\E[a_{2n}(G)]\le \mu n + n\E[\min\{\go_1,\go_2+\go_3+\go_4\}]$ where $\go_i$'s are i.i.d. from~$F$. From here it is easy to see that $\nu<\mu$.   
\end{proof}

We complete the proof of Proposition~\ref{prop:meanbd} by finding lower bound for $\nu$ under appropriate conditions. Recall that $\nu(\mve_1)>0$ iff $F(0)<p_c(d)$ where $\mve_1$ is the first coordinate vector in $\dZ^d$ and $\nu(\mvx)$ is defined as in  \eqref{hw}.

\begin{lemma}\label{lem:posmean} 
	Suppose $G_n$'s are subgraphs of $\dZ^{d-1}$. Then the limit $\nu$ in Lemma~\ref{lem:meanbd} satisfies
	\[
		\nu\ge \nu(\mve_1)
	\]                   
	where $\nu(\mve_1)$ is as defined in \eqref{hw}.                
	Equality holds when $G_n=[-h_n,h_n]^{d-1}$ with  $ h_n\to\infty$ as $n\to\infty$.   
	Moreover, the limit $\nu$ is positive if $G_n=G$ for all~$n$. 
\end{lemma}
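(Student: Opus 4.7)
\medskip
\noindent\textit{Proof plan.}
The statement splits into three parts: the lower bound $\nu\ge\nu(\mve_1)$, equality in the specific case, and positivity for constant $G_n$. I would handle them in turn.

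For $\nu\ge\nu(\mve_1)$, I would use the trivial observation that, after translating so that $o=\vzero\in\dZ^{d-1}$ (using translation invariance of the i.i.d.\ edge weights), every path from $(0,o)$ to $(n,o)$ inside $\dZ\times G_n\subseteq\dZ^d$ is in particular a nearest-neighbour path from $\vzero$ to $n\mve_1$ in $\dZ^d$. Hence $a_n(G_n)\ge a(\vzero,n\mve_1)$ pointwise, and taking expectations, dividing by $n$, and passing to the limit (using \eqref{hw} and existence of $\nu$ from Lemma~\ref{lem:meanbd}) yields the inequality.

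For the equality when $G_n=[-h_n,h_n]^{d-1}$ with $h_n\to\infty$, my plan is a double-limit interchange. For each fixed integer $H\ge 1$, subadditivity of $\{\E[a_n([-H,H]^{d-1})]\}_n$ (proved exactly as in Lemma~\ref{lem:meanbd}) gives the limit
\[
\nu(H):=\lim_{n\to\infty}\frac{\E[a_n([-H,H]^{d-1})]}{n}=\inf_{n\ge 1}\frac{\E[a_n([-H,H]^{d-1})]}{n},
\]
so in particular $\nu(H)\le\E[a_N([-H,H]^{d-1})]/N$ for every $N\ge 1$. Holding $N$ fixed and sending $H\to\infty$, the random variable $a_N([-H,H]^{d-1})$ is monotone non-increasing in $H$, bounded above by the weight of the straight-line path (an $L^1$ envelope since $\E[\go]<\infty$), and converges pointwise to $a(\vzero,N\mve_1)$ because every finite nearest-neighbour path from $\vzero$ to $N\mve_1$ lies in $\dZ\times[-H,H]^{d-1}$ once $H$ is large enough. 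Dominated convergence therefore gives $\E[a_N([-H,H]^{d-1})]\to\E[a(\vzero,N\mve_1)]$, and sending $N\to\infty$ via \eqref{hw} yields $\limsup_{H\to\infty}\nu(H)\le\nu(\mve_1)$; together with $\nu(H)\ge\nu(\mve_1)$ from the first part, this gives $\nu(H)\to\nu(\mve_1)$. For the actual sequence $h_n\to\infty$, monotonicity in the width gives $a_n([-h_n,h_n]^{d-1})\le a_n([-H,H]^{d-1})$ once $h_n\ge H$, so passing to the limit in $n$ yields $\nu\le\nu(H)$ for every $H$; letting $H\to\infty$ completes the equality.

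For positivity when $G_n=G$ is a fixed finite connected graph, the plan is a simple edge-cut argument. For each $i\in\{0,\ldots,n-1\}$ the set $C_i$ of $|V(G)|$ edges of the form $((i,v),(i+1,v))$, $v\in V(G)$, forms a cut separating $(0,o)$ from $(n,o)$ in $\dZ\times G$; any connecting path must use at least one edge of $C_i$, so setting $X_i:=\min\{\go_e:e\in C_i\}$ one obtains the pointwise bound $a_n(G)\ge\sum_{i=0}^{n-1}X_i$, with the $X_i$'s i.i.d.\ and non-negative. Non-degeneracy of $F$ on $[0,\infty)$ forces $F(0)<1$, so $\pr(X_0>0)=(1-F(0))^{|V(G)|}>0$ and therefore $\E[X_0]>0$; dividing by $n$ and passing to the limit gives $\nu\ge\E[X_0]>0$. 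The only nontrivial step is the double-limit interchange in the equality case, which the Fekete bound $\nu(H)\le\E[a_N([-H,H]^{d-1})]/N$ decouples cleanly, so dominated convergence with the straight-line envelope is enough and no shape theorem or fluctuation estimate is needed.
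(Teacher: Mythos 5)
Your proof is correct. The first and third parts essentially coincide with the paper's: the pointwise comparison $a_n(G_n)\ge a(\vzero,n\mve_1)$ coming from $\dZ\times G_n\subseteq\dZ^d$, and the edge-cut bound $a_n(G)\ge\sum_{i=0}^{n-1}X_i$ with $X_i$ the minimum weight over the $|V(G)|$ ``horizontal'' edges at layer $i$. The equality part, however, is where you take a genuinely different route from the paper. The paper blocks the cylinder $[0,n]\times G_n$ into pieces of length $\lfloor l_n/C\rfloor$ with $l_n=\min\{n^{1/2},h_n\}$, lets subadditivity produce a width-proportional-to-length quantity $\alpha(C)=\lim_n\E[X(n,\lfloor Cn\rfloor)]/n$, and then appeals to a result of Smythe and Wierman that the strip (cylinder) first-passage constant equals $\nu(\mve_1)$, finishing with monotone convergence in $C$. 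You instead fix the width $H$, apply Fekete's lemma to the constant-width cylinder to get $\nu(H)=\inf_N\E[a_N([-H,H]^{d-1})]/N\le\E[a_N([-H,H]^{d-1})]/N$, then pass $H\to\infty$ at fixed $N$ by monotone/dominated convergence (the straight-line path provides the $L^1$ envelope, valid since $\E[\go]<\infty$) to land directly on $\E[a(\vzero,N\mve_1)]/N$, and send $N\to\infty$ via \eqref{hw}; the elementary width-monotonicity $a_n(G_n)\le a_n([-H,H]^{d-1})$ for $h_n\ge H$ then gives $\nu\le\nu(H)$ for every $H$. Your version is self-contained (it does not need the Smythe--Wierman strip-percolation input) and decouples the two limits cleanly through the Fekete bound, at the modest cost of an extra layer of nested limits; the paper's version is slightly shorter once the cited result is granted. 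Both are sound.
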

\begin{proof}
First suppose that $G_n=G$ for all $n$ and $G$ has $v$ vertices. It is easy to see that $\E[a_{n}(G_n)]\ge n\E[Y]$ where $Y$ is the minimum of $v$ i.i.d.~random variables each having distribution $F$, because any path from $(0,o)$ to $(n,o)$ must contain at least one edge of the form $((k,u), (k+1,u))$ for each $k=0,\ldots, n-1$. Since $\E[Y]>0$, it follows that $\nu > 0$.                          

	Now consider the case when $G_n$'s are subgraphs of $\dZ^{d-1}$ (we will match $o$ with the origin in $\dZ^{d-1}$). Then $\dZ\times G_{n}$ is a subgraph of $\dZ^{d}$ with $(0,o)=\mvzero$ and $(n,o)=n\mve_{1}$ where $\mvzero$ and $\mve_1$ denote the origin and the first coordinate vector in $\dZ^{d}$. Clearly we have $a(\mvzero,n\mve_1)\le a_{n}(G_n)$ for all $n$. Diving both sides by $n$ and taking expectations  we have 
	\[
		\nu= \lim_{n\to\infty}\frac{1}{n}\E[a_n(G_n)]\ge \lim_{n\to\infty}\frac{1}{n}\E[a(\mvzero,n\mve_1)]=\nu(\mve_1).
	\]                                             
	To prove that $\nu = \nu(\mve_1)$ when $G_n=[-h_n,h_n]^{d-1}$, break the cylinder graph $[n]\times G_n$ into smaller cylinder graphs of length $\lfloor l_n/C\rfloor$ for some fixed constant $C>0$ where $l_n=\min\{n^{1/2},h_n\}$. Note that concatenating paths from $(il_n/C,o)$ to $((i+1)l_n/C,o)$ for $i= 0,1,\ldots$ we get a path from $(0,o)$ to $(n,o)$. Let $n=m\lceil l_n/C\rceil + r$ with $r<\lceil l_n/C\rceil$. Thus we have
	\begin{align}
		\E[a_n(G_n)]\le m \E[X(\lceil l_n/C\rceil,l_n)] + \E[X(r,l_n)]\label{eq:xmn}
	\end{align}                                                  
	where 
	\begin{align*}
		X(n,h)&:=\inf\{\go(\cP)\mid \cP \text{ is a path from $(0,o)$ to $(n,o)$}  \text{ that lies in the }\\
		&\text{ rectangle } [1,n-1]\times[-h,h]^{d-1} \text{ except for the first and last edge} \}.
	\end{align*} 
	Dividing both sides of \eqref{eq:xmn} by $n$ and taking limits (note $l_n=o(n)$ and $l_n\to\infty$ as $n\to\infty$) we have
	\begin{align*}
			\nu:=\lim_{n\to\infty}\frac{1}{n}\E[a_n(G_n)] &\le \liminf_{n\to\infty} \frac{\E[X(\lceil n/C\rceil,n)]}{\lceil n/C\rceil}
			\le \lim_{n\to\infty} \frac{\E[X(n, \lfloor Cn\rfloor)]}{n} 
	\end{align*}        
	for any $C>0$. The last limit exists by subadditivity. Denote the last limit by $\ga(C)$ which also satisfies $\ga(C)=\inf_{n}\E[X(n, \lfloor Cn\rfloor)/n$. Now let us consider the unrestricted cylinder percolation time 
$t(\mvzero,n\mve_1)$ defined as the minimum weight among all paths from $\mvzero$ to $n\mve_{1}$ lying in the vertical strip  $0< x_1 < n$  except for the first and the last edge. From standard results in first-passage percolation theory (see Section~$5.1$ in Smythe and Wierman~\cite{sw78} for a proof) we have 
	\[
	   \lim_{n\to\infty}\frac{1}{n}\E[t(\mvzero,n\mve_1)]=\nu(\mve_{1}). 
	\]                                      
	Now for fixed $n$, the random variables $X(n,\lfloor Cn\rfloor)$ are decreasing in $C$ and $t(\mvzero,n\mve_1)=\lim_{C\to\infty}X(n,\lfloor Cn\rfloor)$. By monotone convergence theorem we have 
	\[
	\E[t(\mvzero,n\mve_1)]=\lim_{C\to\infty}\E[X(n,\lfloor Cn\rfloor)]\ge \limsup_{C\to\infty}\ga(C)n\ge \nu n.
	\] 
	Dividing both sides by $n$ and letting $n\to\infty$ we are done.  
\end{proof}   


\section{Lower bound for the variance} 
\label{sec:varlbd}
 
Here we will prove the lower bound for the variance given in Proposition~\ref{prop:varbd}. First we will prove a uniform lower bound that holds for any $n$ and $G$.  Later we will specialize to the case $G=G_n$ for given $n$.

\begin{lemma}\label{lem:lbd}
	 Let  $G$ be a  subgraph of $\dZ^{d-1}$ with diameter $D$ and number of  edges $k$. Let $F$ be admissible. Then we have
	\begin{align}
	   \var(t_n(G)) \ge c_1\frac{n}{k} \text{ and } \var(T_{n}(G)) \ge c_1\frac{n}{k}\left( 1 - c_2\frac{D}{n}\right) 
	\end{align}
	for some absolute positive constants $c_1,c_2$ that depend only on $d$ and $F$. The same result holds for all nondegenerate probability distributions $F$ on $[0,\infty)$ with $c_i$ depending only on $G$ and $F$. In particular, when $D\le n/(2c_2)$ we have
\[
	\var(T_{n}(G)) \ge c_3\frac{n}{k}
\]
for all $n,k$ for some absolute constant $c_{3}>0$.
\end{lemma}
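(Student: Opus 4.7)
My plan is a Doob-martingale decomposition combined with a Cauchy--Schwarz ``$L^1\!\to\!L^2$'' step. Enumerate the edges of $[0,n]\times G$ as $e_1,\dots,e_N$ in any order; since $G$ is connected we have $|V(G)|\le k+1$, and hence $N\le Cnk$. Let
\[
\Delta_i:=\E[t_n(G)\mid\omega_{e_1},\dots,\omega_{e_i}]-\E[t_n(G)\mid\omega_{e_1},\dots,\omega_{e_{i-1}}]
\]
be the martingale differences. Orthogonality plus Cauchy--Schwarz give
\[
\var(t_n(G))=\sum_i \E[\Delta_i^2]\;\ge\;\frac{1}{N}\Bigl(\sum_i \E|\Delta_i|\Bigr)^2,
\]
so it suffices to show $\sum_i \E|\Delta_i|\ge c\, n$ for some positive constant $c=c(F,d)$.

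\textbf{Key pointwise estimate.} The heart of the argument is the bound $\E|\Delta_e|\ge c_F\,\pr(e\in\pi^{\ast})$ for every edge $e$, where $\pi^{\ast}$ is the (a.s.\ unique) optimal path. As a function of $\omega_e$ with all other weights frozen, $t_n(G)$ is concave, piecewise-linear, and $1$-Lipschitz with slope $\mathbf 1(e\in\pi^{\ast})$. After integrating out the later-revealed weights, the conditional mean $g(x):=\E[t_n(G)\mid\omega_{<e},\omega_e=x]$ inherits concavity and monotonicity, with derivative $g'(x)=\pr(e\in\pi^{\ast}\mid \omega_{<e},\omega_e=x)$. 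Using the dual formula $\E[|\Delta_e|\mid\omega_{<e}]=\frac12\E|g(X)-g(X')|$ with $X,X'$ i.i.d.\ $\sim F$, together with monotonicity of $g$, we obtain for any $\lambda_1<\lambda_2$ and $q>0$ with $F(\lambda_1),1-F(\lambda_2-)\ge q$ (which exist by nondegeneracy):
\[
\E[|\Delta_e|\mid\omega_{<e}]\;\ge\;q^2\bigl(g(\lambda_2)-g(\lambda_1)\bigr)\;=\;q^2\int_{\lambda_1}^{\lambda_2}g'(t)\,dt.
\]
The last step is to relate this Lebesgue-weighted integral of $g'$ on the window $[\lambda_1,\lambda_2]$ to the $F$-weighted integral $\pr(e\in\pi^{\ast}\mid\omega_{<e})=\int g'\,dF$ with only a universal loss. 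Here the concavity is crucial: since $g'$ is nonincreasing, the mass of $g'$ on $[\lambda_2,\infty)$ is controlled by its mass on $[\lambda_1,\lambda_2]$, and a short rearrangement argument produces a constant $c_F>0$ depending only on $F$.

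\textbf{Summation and side-to-side variant.} Summing the pointwise estimate and interchanging sum and expectation,
\[
\sum_e\E|\Delta_e|\;\ge\;c_F\sum_e\pr(e\in\pi^{\ast})\;=\;c_F\,\E|\pi^{\ast}|\;\ge\;c_F\, n,
\]
since any path from $(0,o)$ to $(n,o)$ in $[0,n]\times G$ has graph-length at least $n$. Combined with the Cauchy--Schwarz bound this proves $\var(t_n(G))\ge c_1 n/k$. For $T_n(G)$ the same martingale argument runs verbatim (the minimizing side-to-side path again has graph-length $\ge n$); alternatively one can deduce the $T_n$ bound from the $t_n$ bound via Lemma~\ref{lem:p2s}, which gives $\E|t_n-T_n|^2\le CD^2$, and then the reverse-triangle inequality $\sqrt{\var(T_n)}\ge \sqrt{\var(t_n)}-\sqrt{\var(t_n-T_n)}$ yields the stated $(1-c_2 D/n)$ correction.

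\textbf{Main obstacle.} The really delicate point is the last step of the key pointwise estimate---converting $\int_{\lambda_1}^{\lambda_2}g'(t)\,dt$ into $\int g'\,dF$ with a constant depending only on $F$ (and not on $G$). Without concavity the two quantities could in principle differ by a factor that scales with the complexity of $G$ (for instance, a very rich $G$ could make the event $\{e\in\pi^{\ast}\}$ essentially depend on very small values of $\omega_e$, pushing $g'$'s mass out of $[\lambda_1,\lambda_2]$). The nonincreasing nature of $g'$ is exactly what prevents this and delivers a $G$-uniform constant; making this rigorous for all admissible $F$, in particular for discrete $F$ with no density assumption, is where the main technical work sits.
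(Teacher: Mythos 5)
Your Cauchy--Schwarz reduction $\var(t_n(G))\ge N^{-1}\bigl(\sum_i\E|\Delta_i|\bigr)^2$ and the bound $N\le 3nk$ are sound. The gap is the ``key pointwise estimate'' $\E|\Delta_e|\ge c_F\,\pr(e\in\pi^\ast)$, which you assert and then flag as ``where the main technical work sits''---but the estimate is false in the generality you need, and the reason you give for why concavity rescues it is backwards. You correctly get $\E[|\Delta_e|\mid\omega_{<e}]\ge q^2\int_{\lambda_1}^{\lambda_2}g'\,dt$ (the ``dual formula'' should be an inequality rather than an equality, but that is harmless), and you then need $\int_{\lambda_1}^{\lambda_2}g'\,dt\ge c_F\int g'\,dF$. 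Nonincreasingness of $g'$ controls only the mass to the \emph{right} of $\lambda_2$: from $g'(u)\le g'(\lambda_2)$ for $u\ge\lambda_2$ one gets $\int_{(\lambda_2,\infty)}g'\,dF\le g'(\lambda_2)\le(\lambda_2-\lambda_1)^{-1}\int_{\lambda_1}^{\lambda_2}g'\,dt$. It gives no control at all over $\int_{[\gl,\lambda_1)}g'\,dF$; on the contrary, a nonincreasing $g'$ is \emph{largest} on $[\gl,\lambda_1)$, which is exactly the region you worried about. Concretely, take $g'=\ind_{[\gl,a]}$ with $a<\lambda_1$: then $\int_{\lambda_1}^{\lambda_2}g'\,dt=0$ while $\int g'\,dF=\pr(\go\le a)>0$, and such $g'$ really occur (for the last edge $e_N$ in the enumeration, $g'$ is an indicator). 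So the pointwise estimate fails and the passage to $\sum_e\E|\Delta_e|\ge c_F\,\E|\pi^\ast|\ge c_F n$ is unsupported.

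A second warning sign is that your argument never invokes admissibility $F(\gl)<p_c(d)$, which the lemma needs for the $G$-uniform constant. The paper's proof uses it at precisely the point you skipped. Rather than comparing $\E|\Delta_e|$ to $\pr(e\in\pi^\ast)$ edge by edge, the paper bounds $\var(t)\ge N^{-1}\bigl(\E\bigl[\sum_i\gD_i\bigr]\bigr)^2$ for a related quantity $\gD_i$, and then proves the \emph{pathwise} inequality $\sum_i\gD_i\ge\sum_{e_i\in\cP_*}h(\go_i)$, where $h(x)=\E[(x-\eta)_+]$ with $\eta\sim F$. The right-hand side is the weight of the optimal path under the transformed edge weights $\go'_i=h(\go_i)$, hence at least the transformed first-passage time $t(\mvgo')$. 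Since $F'(0)=\pr(h(\go)=0)=F(\gl)$, admissibility gives $F'(0)<p_c(d)$, and Lemmas~\ref{lem:meanbd} and \ref{lem:posmean} then yield $\E[t(\mvgo')]\ge\nu'(\mve_1)n$ with $\nu'(\mve_1)>0$ depending only on $d,F$. This global comparison to a transformed FPP is what replaces, and makes rigorous, the per-edge estimate your plan rests on; without something like it, the contribution from edges whose optimality is driven by weights near $\gl$ is uncontrolled. (Your reduction from $T_n$ to $t_n$ via Lemma~\ref{lem:p2s} and the reverse triangle inequality matches the paper and is fine.)
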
     

\begin{rem}
The proof of the variance lower bound bears many similarities to the variance bound proofs given in Newman and Piza~\cite{np95} and Benjamini, Kalai, Schramm~\cite{bks03} using influence of random variables. In fact one can view the lower bound as the contribution coming from the first order Fourier terms. When the edge weights are Gaussian (not non-negative) one can give a simpler proof as follows. For any smooth function $f$ of $N$ Gaussian variables one has 
\[
\var(f)=\sum_{k= 1}^{\infty}\sum_{1\le i_{1},i_{2},\ldots,i_{k}\le N} (\E[\partial_{x_{i_1}}\partial_{x_{i_2}}\cdots \partial_{x_{i_k}}f])^{2}
\]
where the $k$-th sum corresponds to the contribution from $k$-th order Fourier coefficients. Using the lower bound for $k=1$ and Cauchy-Schwarz inequality one has
$
\var(f)\ge N^{-1}(\E(\sum_{i=1}^{N}\partial_{x_i}f))^{2}.
$
The same bound holds when $f$ is a Lipschitz function, in particular when $f$ is the minimum path weight function. In that case $\partial_{x_i}f=\ind\{i \text{ is in the optimal path}\}$ and sum over all $i$ gives number of edges in the optimal path. Thus using $N=$ total number of edges $=nk$ and number of edges in the optimal path $\ge n$ we get the variance lower bound $cn/k$. From this heuristic and the fact that for noise sensitive random variables contribution from lower order Fourier coefficients is negligible for the variance, it is also easy to guess why the lower bound is probably not optimal.
\end{rem}


\begin{proof}[Proof of Lemma~\ref{lem:lbd}]
		Fix $G$ and $n$. Let $v$ be the number of vertices in $G$. Let $\{e_1,e_2,\ldots,e_N\}$ be a fixed enumeration of the edges in $[n]\times G$ where $N=(n+1)k+nv$ is the number of edges in that graph. For simplicity let us write  $t_{n}(G)$ simply as $t$. Let $\cF_i$ be the sigma-algebra generated by $\{\go(e_1),\go(e_2),\ldots,\go(e_i)\}$ for $i=0,1,\ldots,N$. For simplicity we will write $\go_i$ instead of $\go(e_i)$. Also we will \change{use $t(\mvgo)$ to explicitly show} the dependence of $t$ on the sequence of edge-weights $\mvgo=(\go_1,\go_2,\ldots,\go_N)$.
	
	Using Doob's martingale decomposition we can write the random variable $t-\E[t]$ as a sum of martingale difference sequences $\E[t| \cF_i]-\E[t| \cF_{i-1}], i=1,2,\ldots,N$. Since martingale difference sequences are uncorrelated we have the standard identity 
	\[
		\var(t)=\sum_{i=1}^N \var( \E[t| \cF_i]-\E[t|\cF_{i-1}] ).
	\]  
	 For $1\le i\le N$, let $\hat{\mvgo}^i$  denote the sequence of edge-weights $\mvgo$ excluding the weight $\go_i$. Moreover, for $x\in\dR^+$, we will write $(\hat{\mvgo}^i,x)$ to denote the sequence of edge-weights where the weight of the edge $e_j$ is $\go_j$ for $j\neq i$ and $x$ for $j=i$. Clearly we have $\mvgo=(\hat{\mvgo}^i,\go_i)$ for $i=1,2,\ldots,N$.  If $\eta$ is a random variable distributed as $F$ and is independent of $\mvgo$, then we have
	$
		\E[t| \cF_i]-\E[t|\cF_{i-1}] = \E[t(\hat{\mvgo}^i,\go_i) - t(\hat{\mvgo}^i,\eta)| \cF_i].
	$    
	It is easy to see that (\change{since $\var(t)\ge \var(\E[t|\cF])$ for any sigma field $\cF$})
	\begin{align*}
		\var(\E[t(\hat{\mvgo}^i,\go_i) - t(\hat{\mvgo}^i,\eta)|\cF_i])
		&\ge \var( \E[\E[t(\hat{\mvgo}^i,\go_i) - t(\hat{\mvgo}^i,\eta)|\cF_i]| \go_i] )  \\
		&=   \var( \E[t({\mvgo})|\go_i]).
	\end{align*}                                                                                          
Now for any random variable $X$  we have
$
	   \var(X)=\frac{1}{2}\E(X_1-X_2)^2 
$                                      
	where $X_1,X_2$ are i.i.d.~copies of $X$. Thus we have	
\begin{align}
	\var( \E[t({\mvgo})|\go_i]) 
	&=\frac{1}{2}\E[(\E[t(\hat{\mvgo}^i,\go_i)-t(\hat{\mvgo}^i,\eta)| \go_i,\eta])^2] \notag
	\\&=\E[(\ind_{\{\go_{i}>\eta\}}\E[t(\hat{\mvgo}^i,\go_i)-t(\hat{\mvgo}^i,\eta)| \go_i,\eta])^2] \label{eq:idvar}
\end{align} 	
where in the last line we have used the fact that $\go_{i}$ and $\eta$ are i.i.d.~.
Define 
\begin{align}
\gD_{i}:= \E[\ind_{\{\go_{i}>\eta\}}(t(\hat{\mvgo}^i,\go_i)-t(\hat{\mvgo}^i,\eta))| \mvgo]
\end{align}
for $i=1,2,\ldots,N$. From \eqref{eq:idvar} we have $\var( \E[t({\mvgo})|\go_i])  \ge (\E[\gD_{i}])^{2}$ for all $i$. Combining we have
\begin{align*}
\var(t) \ge \sum_{i=1}^{N}(\E[\gD_{i}])^{2}\ge \frac{1}{N}\left( \sum_{i=1}^{N}\E[\gD_{i}]\right)^{2} = \frac{1}{N}( \E[g(\mvgo)])^{2}
\end{align*}
where 
\[
g(\mvgo):= \sum_{i=1}^N \gD_{i}=\sum_{i=1}^{N}\E[\ind_{\{\go_{i}>\eta\}}(t(\mvgo)-t(\hat{\mvgo}^i,\eta))| \mvgo] .
\]

Let $\cP_*(\mvgo)$ be a minimum weight path for $\mvgo$ chosen according to a deterministic rule. If the edge $e_i$ is in $\cP_*(\mvgo)$, we have 
\begin{align*}
\ind_{\{\go_{i}>\eta\}}(t(\mvgo)-t(\hat{\mvgo}^i,\eta)) \ge \ind_{\{\go_{i}>\eta\}}(\go_{i}-\eta)=(\go_{i}-\eta)_{+}
\end{align*}
as the weight of the path $\cP_*(\mvgo)$ for the configuration $(\hat{\mvgo}^i,\eta)$ is $t(\mvgo)-\go_{i}+\eta$. Thus we have
\begin{align}
g(\mvgo)\ge \sum_{i : e_i \in\cP_*(\mvgo)} \E[(\go_{i}-\eta)_{+}|\go_{i}].\label{eq:hp}
\end{align}
Now define the function 
\[
h(x)=\E[(x-\eta)_{+}] \text{ where } \eta\sim F.
\]
It is easy to see that $h(x)=0$ iff $x\le \gl$ where $\gl$ is the smallest point in the support of $F$ and $\E[h(\go)]<\infty$. 

Define a new set of edge weights $\go_{i}'=h(\go_{i})$ for $i=1,2,\ldots,N$ with distribution function $F'$. Clearly $\go'_i$'s are i.i.d.~with $F'(0)=\pr(h(\go)=0)=\pr(\go=\gl)$. Moreover let $t(\mvgo')$ be the cylinder first-passage time from $(0,o)$ to $(n,o)$ in $[0,n]\times G$ with edge weights $\mvgo'$. From \eqref{eq:hp} we have $g(\mvgo)\ge t(\mvgo')$. Now from  Lemma~\ref{lem:meanbd} and \ref{lem:posmean} we have $\E[t(\mvgo')] \ge \nu'(\mve_{1})n$ where $\nu'(\mve_{1})$ is as defined in \eqref{hw}  with edge weight distribution $F'$ and $\nu'(\mve_{1})>0$ as $F'(0)<p_{c}(d)$. Also note that $N=(n+1)k+nv\le 3nk$. Thus, finally we have
	\begin{align}
		\frac{1}{n}\var(t)\ge \frac{1}{3k} \left(\frac{\E[t(\mvgo')]}{n}\right)^2 \ge \frac{\nu'(\mve_{1})^2}{3k} \label{eq:vartn} .
	\end{align}

	Now assume that $F$ is any non-degenerate distribution supported on $[0,\infty)$. From Lemma~\ref{lem:posmean} we can see that $\E[t_{n}(G)]\ge cn$ for all $n$ for some constant $c>0$ depending on $G$ and $F$. Thus we are done. 
	
	To prove the result for $T_{n}(G)$ we start with $T_{n}(G)$ in place of $t_{n}(G)$ and use $\E[T_{n}(G)]\ge \E[t_{n}(G)] - 2\mu D$ from Lemma \ref{lem:p2s} in \eqref{eq:vartn}.
\end{proof}
    
\begin{proof}[\bf Proof of the lower bound in Proposition~\ref{prop:varbd}]
From Lemma~\ref{lem:p2s} we have 
	\begin{align*}
		|\var(a_n(G_n))^{1/2} - \var(t_n(G_n))^{1/2}| &\le (\E[|a_n(G_n)-t_n(G_n)|^2])^{1/2}\\& \le 2d_n(\mu^2+\gs^2)^{1/2}
	\end{align*}                                
	for all $n\ge 1$. Now under Theorem~\ref{thm:maingr} we have $d_n=o(n^{1/(2+\theta)})$ which clearly implies that $d_n^2 = o(n/k_n)$ as $k_n=O(d_n^\theta)$. Thus by Lemma \ref{lem:lbd} we are done. Using Lemma~\ref{lem:plen} one can drop the condition $d_n=o(n^{1/(2+\theta)})$ when $F$ is admissible. 
\end{proof}

     
\section{Upper bound for Central moments} 
\label{sec:pfmbd}    
  
In this section we will prove upper bounds for central moments of $a_n(G_n)$, $t_n(G_n)$ and $T_{n}(G_n)$, in particular the upper bound for variance of $a_n(G_n)$ stated in Proposition~\ref{prop:varbd}. Note that by Lemma~\ref{lem:p2s} we have 
\begin{align*}
	\E[|t_{n}(G_n)-a_{n}(G_n)|^p] &\le \E[|t_{n}(G_n)-T_{n}(G_n)|^p]\le \E[(2d_n\go)^p]
\end{align*} 
for all $n$ when $\E[\go^p]<\infty$ for some $p\ge 2$ with $ \go\sim F$. Hence it is enough to prove bounds for $\E[|t_{n}(G_n)-\E[t_{n}(G_n)]|^p]$. 

Fix $n\ge 1$ and a finite connected graph $G$.
We will prove the following.   

\begin{prop}\label{prop:mbd}
	   Let $\E[\go^p]<\infty$ for some $p\ge 2$ and $F(0)<p_c(d)$ where $\go\sim F$. Also suppose that $G$ is a finite subgraph of $\dZ^{d-1}$. Then for any $n\ge 1$ we have
	\[
		\E[|t_{n}(G)- \E[t_{n}(G)] |^p]\le c  n^{p/2}
	\]                                           
	where $c$  is a constant depending only on $p,d$ and $F$. Moreover, the same result holds with $c$ depending on $G$ without any restriction on $F(0)$. The above result holds for $a_n(G)$ and $T_n(G)$ when 
	\[
	D\le Cn^{1/2}
	\]
for some absolute constant $C>0$ where $D$ is the diameter of $G$. 
\end{prop}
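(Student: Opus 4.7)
The strategy is to extend the martingale decomposition used in the proof of Lemma~\ref{lem:lbd}, combined with the Burkholder--Davis--Gundy (BDG) inequality, generalizing Kesten's variance argument to higher moments. Enumerate the $N=(n+1)k+nv$ edges of $[0,n]\times G$ as $e_1,\ldots,e_N$ and let $\cF_i$ be the $\gs$-algebra generated by the first $i$ edge weights. With $V_i := \E[t_n(G)|\cF_i] - \E[t_n(G)|\cF_{i-1}]$, BDG gives
\[
\E[|t_n(G)-\E[t_n(G)]|^p] \le C_p\,\E\biggl[\biggl(\sum_{i=1}^N V_i^2\biggr)^{p/2}\biggr].
\]

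By the same resampling device as in Lemma~\ref{lem:lbd}, if $\eta$ is an independent copy of $\go_i$ then $V_i = \E[t_n(\mvgo)-t_n(\hat{\mvgo}^i,\eta)\mid\cF_i]$. The monotone dependence of the minimum passage time on a single edge weight yields
\[
|t_n(\mvgo)-t_n(\hat{\mvgo}^i,\eta)| \le (\go_i+\eta)\,\ind\{e_i\in\cP^*(\mvgo)\cup\cP^*(\hat{\mvgo}^i,\eta)\},
\]
where $\cP^*$ is a deterministically selected geodesic. Applying Jensen and averaging over $\eta$,
\[
\sum_i V_i^2 \ \lesssim\ \sum_{e_i\in\cP^*(\mvgo)}(\go_i^2+c_1)\ +\ (\text{analogous resampled term}),
\]
so after one further application of H\"older, matters reduce to bounding $\E[|\cP^*(\mvgo)|^{p/2}]$ and $\E[|\cP^*(\mvgo)|^{p/2-1}\sum_{e_i\in\cP^*(\mvgo)}\go_i^p]$ by constants times $n^{p/2}$.

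The control of $\E[|\cP^*(\mvgo)|^q]$ will come from Kesten-type geodesic-length estimates under $F(0)<p_c(d)$, which is precisely what Lemma~\ref{lem:plen} is designed to supply (any polynomial moment of $|\cP^*|/n$ is bounded for admissible $F$). In the fixed-$G$ case where the $F(0)<p_c$ hypothesis is dropped, the trivial deterministic bound $|\cP^*|\le n|V(G)|$ suffices and the constant $c$ depends on $G$. The main obstacle will be the second, combined, estimate, since $\cP^*$ and the weights $\go_i$ on $\cP^*$ are \emph{not} independent. My plan is the standard decoupling trick: truncate $\go_i = \go_i\ind\{\go_i\le K\} + \go_i\ind\{\go_i>K\}$ for a large parameter $K$. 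The truncated piece contributes $K^p|\cP^*|^{p/2}$, controlled by the moment bound on $|\cP^*|$. For the tail piece, drop the geodesic indicator to upper bound $\sum_{e_i\in\cP^*}\go_i^p\ind\{\go_i>K\}$ by $\sum_{i=1}^N \go_i^p\ind\{\go_i>K\}$, a sum of i.i.d.\ variables of mean tending to $0$ as $K\to\infty$; standard concentration for sums of i.i.d.\ random variables then controls this in $L^{p/2}$ by $cN\cdot o_K(1)$, and combining with the $|\cP^*|^{p/2-1}$ factor via H\"older yields the desired $O(n^{p/2})$ bound after choosing $K$ appropriately.

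Finally, the statement for $a_n(G)$ and $T_n(G)$ will follow from Lemma~\ref{lem:p2s}: the $L^p$ triangle inequality gives
\[
\|a_n(G)-\E[a_n(G)]\|_p \le \|t_n(G)-\E[t_n(G)]\|_p + 2\|a_n(G)-t_n(G)\|_p,
\]
and the last term is at most $4D\,\|\go\|_p$ by Lemma~\ref{lem:p2s}, which is $O(n^{1/2})$ under the hypothesis $D\le Cn^{1/2}$. The analogous chain, using $T_n(G)\le a_n(G)\le t_n(G)$, handles $T_n(G)$ as well.
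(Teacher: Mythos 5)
Your BDG-plus-truncation route has a genuine gap precisely where the paper says naive martingale approaches fail. After BDG (and, modulo a dual/Doob-maximal step that you should state, since $\sum_iV_i^2$ is a sum of \emph{conditional} expectations over the filtration, not a deterministic geodesic sum), the power-mean inequality leaves you with the coupled quantity $\E\bigl[|\cP^*|^{p/2-1}\sum_{e_i\in\cP^*}\go_i^p\bigr]$. Your truncation at level $K$ gives $K^p\E[|\cP^*|^{p/2}]=O(K^pn^{p/2})$ for the bounded part, and for the tail part you drop the geodesic indicator and bound $\sum_{i=1}^N\go_i^p\ind\{\go_i>K\}$, where $N\approx nk$ with $k$ the number of edges of $G$. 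H\"older (with exponents $p/(p-2)$ and $p/2$) then produces an estimate of order $n^{(p-2)/2}\cdot nk\,\E[\go^p\ind\{\go>K\}]=n^{p/2}k\,\eps_K$. Since $K$ cannot depend on $G$ without reintroducing a $G$-dependence in the bounded part, you cannot kill the factor $k$, so your constant depends on $G$. This proves the weaker ``constant depending on $G$'' clause of the proposition, but not the clause with $c$ depending only on $p,d,F$ --- which is the one actually needed in Theorem~\ref{thm:maingr}, where $G=G_n$ grows with $n$. This is exactly the spurious $k$-power loss the paper attributes to BBLM-style moment inequalities just before Lemma~\ref{lem:mubd}, explaining why the authors devise their own argument.

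The paper avoids this via the Efron--Stein-type telescoping of Lemma~\ref{lem:mubd}: pairing the centred variable with $g(x)=x|x|^{p-2}$ rather than squaring a martingale, exploiting exchangeability of $\go_i,\go_i'$, and using the elementary Lemmas~\ref{lem:tech1}--\ref{lem:tech2}, it arrives at the \emph{decoupled} bound
\begin{equation*}
\E\bigl[|t_n(G)-\E[t_n(G)]|^p\bigr]\le(2p)^{p/2}\E[L_n^{p/2}]\,\E[\go^2]^{p/2}+2^{p/2}(2p)^{p-2}\E[L_n]\,\E[\go^{p}],
\end{equation*}
in which the essential-edge count $L_n$ and the weight moments appear only as separate factors, so the coupled moment you are stuck on never arises. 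Kesten's Lemma~\ref{lem:plen} then supplies $\E[L_n^{p/2}]\le Cn^{p/2}$ with $C=C(p,d,F)$, which is all that is needed. Your use of Lemma~\ref{lem:plen} for the geodesic-length moments, and the final $L^p$ triangle-inequality reduction of $a_n(G)$ and $T_n(G)$ to $t_n(G)$ via Lemma~\ref{lem:p2s}, both match the paper and are fine.
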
  

When $F$ has finite exponential moments in some neighborhood of zero, one can use Talagrand's~\cite{tala95} strong concentration inequality along with Kesten's Lemma~\ref{lem:plen} to prove a much stronger result $\pr(|t_{n}(G)- \E[t_{n}(G_n)]|\ge x)\le 4e^{-c_1x^2/n}$ for $x\le c_2n$ for some constants $c_1,c_2>0$. Moreover, one can use moment inequalities due to Boucheron, Bousquet, Lugosi and Massart~\cite{bblm05} to prove that the $p$-th moment is bounded by $n^{p/2}k^{p/2-1}$ for $p\ge 2$. But none of that gives what we need for the proof of Theorem \ref{thm:maingr}, so we have to devise our own proof of Proposition \ref{lem:tech1}.

The next two technical lemmas will be useful in the proof of Proposition~\ref{prop:mbd}. Proofs of  the two technical lemmas and of Proposition~\ref{prop:mbd} are given at the end of this section. 

\begin{lemma}\label{lem:tech1}
	For any $p> 2$ and $x,y\in\dR$ we have
	\[
		\abs{x|x|^{p-2}-y|y|^{p-2}}\le \max\{1,(p-1)/2\}|x-y|(|x|^{p-2}+|y|^{p-2}).
	\]
\end{lemma}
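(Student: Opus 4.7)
The plan is to split the proof according to the relative signs of $x$ and $y$. Writing $f(t) := t|t|^{p-2}$, we have $f(t) = \sgn(t)|t|^{p-1}$, and since $f$ is odd the signs of both $x$ and $y$ can be flipped simultaneously without changing any of the quantities in the inequality. So it is enough to handle two essential cases: $x$ and $y$ of opposite sign, and $x,y$ both non-negative.

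In the opposite-signs case, take $x > 0 > y$. A direct expansion gives
\[
|f(x) - f(y)| = x^{p-1} + |y|^{p-1}, \qquad |x-y|(|x|^{p-2} + |y|^{p-2}) = x^{p-1} + |y|^{p-1} + x|y|^{p-2} + |y|x^{p-2},
\]
so the inequality holds with constant $1$ (hence with any $C \ge 1$) by non-negativity of the two cross terms. In the same-signs case, using oddness of $f$ it suffices to show
\[
x^{p-1} - y^{p-1} \le C\,(x - y)(x^{p-2} + y^{p-2})
\]
for $x \ge y \ge 0$, with $C = \max\{1, (p-1)/2\}$.

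To finish this last case I would split on the size of $p$. For $p \ge 3$, the function $s \mapsto s^{p-2}$ is convex, and the trapezoidal inequality immediately gives
\[
x^{p-1} - y^{p-1} = (p-1)\int_y^x s^{p-2}\,ds \le \frac{p-1}{2}(x - y)(x^{p-2} + y^{p-2}),
\]
which is the claim with $C = (p-1)/2$. For $p \in (2, 3]$, the same function is concave and the trapezoidal bound points the wrong way, which is the one subtle point in the proof. Here I would instead argue algebraically: assume $y > 0$ (the case $y = 0$ is equality), set $r = x/y \ge 1$, and divide through by $y^{p-1}$. The target with $C = 1$ becomes $r^{p-1} - 1 \le (r-1)(r^{p-2} + 1)$, which after expansion is equivalent to $r^{p-2} \le r$, i.e.\ $r^{p-3} \le 1$, and this holds since $r \ge 1$ and $p - 3 \le 0$. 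The only real obstacle is this concave regime $p\in(2,3]$, and the displayed algebraic identity cleanly resolves it with the sharper constant $1$ rather than $(p-1)/2$.
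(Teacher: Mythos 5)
Your proof is correct, and it takes a genuinely different route from the paper's. The paper reduces the claim to a one-variable optimization in $z = x/y$: after separating the opposite-sign quotient $(z^{p-1}+1)/\big((z+1)(z^{p-2}+1)\big)$ (bounded by $1$ by the same cross-term expansion you use) from the same-sign quotient $(z^{p-1}-1)/\big((z-1)(z^{p-2}+1)\big)$, it evaluates the supremum of the latter exactly via the substitution $z = e^{2x/(p-1)}$, which after a product-to-sum identity gives $\bigl(1 - \sup_{x\ge 0}\tfrac{\sinh\frac{p-3}{p-1}x}{\sinh x}\bigr)^{-1}$, and that supremum is $\frac{p-3}{p-1}$ for $p>3$ and $0$ for $p\le 3$. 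Your argument replaces the hyperbolic computation with two elementary observations: for $p\ge 3$ the map $s\mapsto s^{p-2}$ is convex, so the trapezoid rule applied to $x^{p-1}-y^{p-1} = (p-1)\int_y^x s^{p-2}\,ds$ gives exactly $C=(p-1)/2$; for $p\in(2,3]$ the normalized inequality collapses to $r^{p-3}\le 1$ for $r\ge1$. Both proofs split cases at $p=3$ in the end, but your route avoids the $\sinh$ identities entirely and makes transparent why $(p-1)/2$ is the sharp constant in the convex regime, while the paper's substitution buys an exact closed form for the supremum (not just an upper bound). Either is fine; yours is arguably the more readable.
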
    

\begin{lemma}\label{lem:tech2}
	 Let $\gb>1, a,b\ge 0$. Let $y\ge0$ satisfy $y^{\gb}\le a + by$. Then
	\[ 
	  y^{\gb-1}\le a^{(\gb-1)/\gb} + b.
	\]
\end{lemma}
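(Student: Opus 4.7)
The plan is to argue by contradiction. Suppose, for the sake of contradiction, that $y^{\beta-1} > a^{(\beta-1)/\beta} + b$. The edge cases $y=0$ and $a=0$ should be dispatched first: if $y=0$ then $y^{\beta-1}=0\le a^{(\beta-1)/\beta}+b$ trivially, and if $a=0$ the hypothesis $y^\beta\le by$ immediately yields $y^{\beta-1}\le b$ (when $y>0$), so we may assume $y>0$ and $a>0$.

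Under these assumptions, the supposed inequality rearranges to $y^{\beta-1}-b > a^{(\beta-1)/\beta}>0$. Multiplying through by $y>0$ gives
\[
y^\beta - by > a^{(\beta-1)/\beta}\, y.
\]
On the other hand, the hypothesis $y^\beta \le a+by$ rearranges to $y^\beta - by \le a$. Chaining these two estimates yields $a^{(\beta-1)/\beta}\, y < a$, i.e.\ $y < a^{1/\beta}$. Raising both sides to the power $\beta-1>0$ gives $y^{\beta-1} < a^{(\beta-1)/\beta}$, which contradicts the standing assumption $y^{\beta-1} > a^{(\beta-1)/\beta}+b \ge a^{(\beta-1)/\beta}$. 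This contradiction establishes the claim.

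There is essentially no obstacle here; the lemma is a short elementary manipulation. The only point requiring a moment's care is separating out the degenerate cases $a=0$ and $y=0$ so that the multiplication by $y$ and the implicit division by $a$ in the step $a^{(\beta-1)/\beta}y<a\Rightarrow y<a^{1/\beta}$ are legitimate.
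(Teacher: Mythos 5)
Your proof is correct. It does, however, take a genuinely different route from the paper's. The paper defines $g(a,b):=\sup\{y\ge 0: y^{\beta}\le a+by\}$ and $f(a,b):=(b+a^{1-1/\beta})^{1/(\beta-1)}$, uses the homogeneity relations $g(a,b)=b^{1/(\beta-1)}g(ab^{-\beta/(\beta-1)},1)$ and likewise for $f$ to normalize to $b=1$, and then exploits the strict monotonicity of $F(x)=x^{\beta}-x$ on $[1,\infty)$ to show that $y>f(a,1)$ forces $F(y)>a$. Your argument dispenses with both the scaling reduction and the monotonicity analysis: after isolating the degenerate cases $y=0$ and $a=0$, you simply assume the conclusion fails, multiply $y^{\beta-1}-b>a^{(\beta-1)/\beta}$ by $y$, combine with the hypothesis $y^{\beta}-by\le a$ to get $y<a^{1/\beta}$, and derive an immediate contradiction. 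This is shorter and more self-contained; the paper's approach is slightly more structural in that it identifies the extremal $y$ explicitly, but for the purposes of this lemma your direct contradiction is entirely adequate and arguably cleaner. Your care in handling $a=0$ and $y=0$ separately (so that the multiplication by $y$ and the division by $a^{(\beta-1)/\beta}$ are legitimate) is exactly the right amount of bookkeeping.
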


Before proving Proposition~\ref{prop:mbd} we need to define a new random variable $L_{n}(G)$. Consider \change{the }cylinder first-passage time $t_n(G)$ in $[n]\times G$. Call a path $\cP$ from $(0,o)$ to $(n,o)$ in $[n]\times G$ a weight minimizing path if its weight $\go(\cP)$ equals $t_{n}(G)$. An edge $e$ of $[n]\times G$ is called a \emph{pivotal} edge if all weight minimizing paths pass through the edge $e$. Let $L_{n}(G)$ denote the number of pivotal edges given the edge weights $\mvgo$. Clearly $L_{n}(G)$ is a random variable. Lemma \ref{lem:mubd} gives upper bound for the $p$-th central moment of $t_{n}(G)$ in terms of moments of $L_{n}(G)$. Roughly it says that the fluctuation of $t_{n}(G)$ around its mean behaves like square root of $L_{n}(G)$. 

\begin{lemma}\label{lem:mubd}
	     Let $\E[\go^p]<\infty$ for some $p\ge 2$ where $\go\sim F$. Then we have
 \begin{align*}
 	\E[|t_{n}(G)-\E[t_{n}(G)]|^p]&\le  (2p)^{p/2} \E[L_n(G)^{p/2}]\E[\go^2]^{p/2}\notag\\
	&\qquad  + 2^{p/2} (2p)^{p-2}\E[L_n(G)] \E[\go^{p}]
 \end{align*}                                                     
where $L_{n}(G)$ is the number of pivotal edges for $t_{n}(G)$.
\end{lemma}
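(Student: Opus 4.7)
The plan is to apply a Rosenthal-type moment inequality to the Doob martingale decomposition of $t_n(G)-\E[t_n(G)]$, using the essential-edges structure to control the martingale differences. Enumerate the $N$ edges $e_1,\dots,e_N$ of $[0,n]\times G$, set $\cF_i=\gs(\go_{e_1},\dots,\go_{e_i})$, and introduce i.i.d.\ resampled weights $\tilde\go_1,\dots,\tilde\go_N\sim F$ independent of $\mvgo$. As in the proof of Lemma~\ref{lem:lbd}, the martingale differences admit the resampling representation $d_i=\E[t(\mvgo)-t(\hat{\mvgo}^i,\tilde\go_i)\mid\cF_i]$, so that $t_n(G)-\E[t_n(G)]=\sum_{i=1}^N d_i$.

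The key deterministic estimate, which I would prove by case analysis, is
\[
|t(\mvgo)-t(\hat{\mvgo}^i,\tilde\go_i)|\le|\go_i-\tilde\go_i|\,\bigl(\ind\{B_i\}+\ind\{B_i'\}\bigr),
\]
where $B_i=\{e_i\in\mathrm{ess}(\mvgo)\}$ and $B_i'=\{e_i\in\mathrm{ess}(\hat{\mvgo}^i,\tilde\go_i)\}$. Letting $\mu_i^-$ denote the minimum path-weight avoiding $e_i$ and $\mu_i^+$ the minimum weight of paths through $e_i$ with $\go_i$ removed (both depending only on $\hat{\mvgo}^i$), one has $e_i\in\mathrm{ess}(\mvgo)$ iff $\go_i<c_i:=\mu_i^--\mu_i^+$. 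When $\tilde\go_i\ge\go_i$ the first-passage time cannot decrease, and it is unchanged if $B_i$ fails (use any minimizing path avoiding $e_i$); when $B_i$ holds the increase is at most $\tilde\go_i-\go_i$. The case $\tilde\go_i<\go_i$ is symmetric, obtained by swapping the roles of $\mvgo$ and $(\hat{\mvgo}^i,\tilde\go_i)$.

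With this in hand I would invoke the Rosenthal-type bound
\[
\E\Bigl|\sum_i d_i\Bigr|^p\le(2p)^{p/2}\,\E\Bigl[\Bigl(\sum_i\E[d_i^2\mid\cF_{i-1}]\Bigr)^{p/2}\Bigr]+2^{p/2}(2p)^{p-2}\sum_i\E|d_i|^p
\]
for $p\ge 2$ (a standard martingale moment inequality with precisely these constants), and control each term via conditional Jensen, so that $\E[d_i^2\mid\cF_{i-1}]\le\E[(\go_i-\tilde\go_i)^2(\ind\{B_i\}+\ind\{B_i'\})\mid\cF_{i-1}]$ and analogously for $\E|d_i|^p$. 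The crucial analytic input is the elementary truncation inequality
\[
\int_0^c x^r\,dF(x)\le F(c)\,\E[\go^r],\qquad r\ge 0,
\]
which holds because $\E[\go^r\mid\go\le c]\le c^r\le\E[\go^r\mid\go>c]$. Since conditional on $\hat{\mvgo}^i$ one has $B_i=\{\go_i<c_i\}$ and $B_i'=\{\tilde\go_i<c_i\}$ with $c_i$ a function of $\hat{\mvgo}^i$ alone, this truncation inequality gives $\E[\go_i^r\ind\{B_i\}\mid\hat{\mvgo}^i]\le\pr(B_i\mid\hat{\mvgo}^i)\,\E[\go^r]$, and similarly for $B_i'$. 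Summing over $i$ and using $\sum_i\pr(B_i)=\E[L_n(G)]$ recovers a factor of $L_n(G)$; after raising to the $p/2$-th power in the first Rosenthal term this produces $\E[L_n(G)^{p/2}]\,\E[\go^2]^{p/2}$, while in the second term it yields $\E[L_n(G)]\,\E[\go^p]$, matching the stated constants.

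The main obstacle will be the contribution of $B_i'$, whose essential-edge set genuinely differs from $\mathrm{ess}(\mvgo)$. This will be resolved by noting that $\ind\{B_i'\}$ depends only on $\hat{\mvgo}^i$ and $\tilde\go_i$, so is independent of $\go_i$, and by exploiting the exchangeability symmetry $\go_i\leftrightarrow\tilde\go_i$, which permits sums of the form $\sum_i\pr(B_i')$ to be replaced by $\sum_i\pr(B_i)=\E[L_n(G)]$ inside expectations, so that the final bound is expressed entirely in terms of moments of $L_n(G)$ and of $\go$.
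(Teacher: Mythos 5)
Your deterministic estimate and the reduction to essential edges are sound, and your second Rosenthal term (which is \emph{linear} in the indicators $\ind\{B_i\}$, $\ind\{B_i'\}$) can indeed be handled by the marginal identity $\pr(B_i')=\pr(B_i)$ together with your truncation inequality. The genuine gap is in the first Rosenthal term. To get $\E\bigl[\bigl(\sum_i\E[d_i^2\mid\cF_{i-1}]\bigr)^{p/2}\bigr]\le C\,\E[L_n(G)^{p/2}]\,\E[\go^2]^{p/2}$ you must control the $p/2$-th moment of $\sum_i(\ind\{B_i\}+\ind\{B_i'\})$, and you must also justify pulling the conditional expectations out of the $p/2$-th power, which needs a Garsia--Neveu type lemma whose standard form requires adapted increments --- here $\ind\{B_i\}$ depends on $\go_{i+1},\dots,\go_N$ and $\ind\{B_i'\}$ on $\tilde\go_i$, so neither is $\cF_i$-measurable. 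While $\sum_i\ind\{B_i\}=L_n(G)$ exactly, the events $B_i'$ for different $i$ are computed on $N$ \emph{different} resampled configurations; exchangeability of each pair $(\go_i,\tilde\go_i)$ gives $\ind\{B_i'\}\equald\ind\{B_i\}$ only marginally, so it converts $\sum_i\pr(B_i')$ into $\E[L_n(G)]$ but says nothing about the joint quantity $\E[(\sum_i\ind\{B_i'\})^{p/2}]$. No bound of the form $\E[(\sum_i\ind\{B_i'\})^{p/2}]\le C\,\E[L_n(G)^{p/2}]$ follows without further argument, and falling back on the worst-case bound $\sum_i\ind\{B_i'\}\le N\asymp nk$ reintroduces exactly the factor $k^{p/2-1}$ that the paper flags as the reason the off-the-shelf moment inequalities of Boucheron--Bousquet--Lugosi--Massart are not good enough for Theorem~\ref{thm:maingr}. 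A secondary issue: a Rosenthal inequality with \emph{precisely} the constants $(2p)^{p/2}$ and $2^{p/2}(2p)^{p-2}$ is not a citable standard result, so even granting the rest, the stated constants would not come out as claimed.

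The paper avoids this trap by never raising a sum of resampled indicators to a power. Writing $f(\mvgo)=t_n(G)-\E[t_n(G)]$ and $g(x)=x|x|^{p-2}$, it starts from the symmetrized telescoping identity $\E[|f(\mvgo)|^p]=\tfrac12\sum_i\E[(f(\mvgo)-f(\mvgo^{\{i\}}))(g(f(\mvgo^{[i-1]}))-g(f(\mvgo^{[i]})))]$, controls $g$ via Lemma~\ref{lem:tech1}, keeps the weight $|f(\mvgo^{[i-1]})|^{p-2}$ inside each expectation, and then uses the \emph{joint} equidistribution $(\mvgo^{[i-1]},\go_i')\equald(\mvgo,\go_i')$ to rewrite every summand as an expectation over the single configuration $\mvgo$. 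Since summing expectations is linear, $\sum_i\ind\{e_i\text{ essential for }\mvgo\}$ collapses to $L_n(G)$ inside one expectation, and H\"older gives $\E[L_n|f|^{p-2}]\le\E[L_n^{p/2}]^{2/p}\E[|f|^p]^{(p-2)/p}$; the resulting implicit inequality in $\E[|f|^p]$ is solved by Lemma~\ref{lem:tech2}, which is where the stated constants come from. To salvage your route you would need a genuine decoupling statement comparing $\E[(\sum_i\ind\{B_i'\})^{p/2}]$ (or $\E[(\sum_iF(c_i-))^{p/2}]$) with $\E[L_n(G)^{p/2}]$; this does not follow from exchangeability.
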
 
\begin{proof}
	The proof essentially is a general version of the Efron-Stein inequality. Fix $n,G$ and a fixed enumeration $\{e_1,\ldots,e_N\}$ of the edges in $[n]\times G$ where $N$ is the number of edges in that graph. Consider the random variable $t_{n}(G)-\E[t_{n}(G)]$ as a function $f(\mvgo)$ of the edge weight configuration $\mvgo=(\go_1,\ldots,\go_N)\in \dR_+^N$ where $\go_i$ is the weight of the edge $e_i$. 
	
	 Let $\go_1',\ldots,\go_N'$ be i.i.d.~copies of $\go_1$. For a subset $S$ of $\{1,2,\ldots,N\}$ define $\mvgo^{S}\in\dR_+^N$ as the configuration where  $(\mvgo^{S})_i=\go_i$ for $i\notin S$ and $(\mvgo^{S})_i=\go'_i$ for $i\in S$. Recall that $[i]$ denote the set $\{1,2,\ldots,i\}$. Clearly $\mvgo^{[0]}=\mvgo$.  
	
	For illustration we will prove the $p=2$ case first  which is the Efron-Stein inequality. Recall that $\E[f(\mvgo)]=0$. We have
\begin{align*}
	\E[f(\mvgo)^2]	
	&= \E[f(\mvgo)(f(\mvgo) - f(\mvgo^{[N]}))]
	= \sum_{i=1}^N \E[f(\mvgo)(f(\mvgo^{[i-1]}) - f(\mvgo^{[i]}))].
\end{align*}     
Exchanging $\go_i,\go_i'$ one can easily see that $(\mvgo^{\{i\}},\mvgo^{[i]},\mvgo^{[i-1]})\equald (\mvgo,\mvgo^{[i-1]},\mvgo^{[i]})$ and hence we have
\begin{align*}
	\E[f(\mvgo)^2]	= \frac{1}{2} \sum_{i=1}^N \E[(f(\mvgo) - f(\mvgo^{\{i\}}) )(f(\mvgo^{[i-1]}) - f(\mvgo^{[i]}))].
\end{align*}
 By Cauchy-Schwarz inequality and exchangeability of $\go_i,\go'_i$ we see that 
\begin{align*}
	\E[f(\mvgo)^2] 
	\le \sum_{i=1}^N \E[(f(\mvgo) - f(\mvgo^{\{i\}}))^2\ind\{\go'_i>\go_i\}].
\end{align*}                                                       
 Now note that $\go'_i>\go_i$ and $f(\mvgo) \neq f(\mvgo^{\{i\}})$ implies that the $i$-th edge $e_i$ is essential for the configuration $\mvgo$ and moreover, $0<f(\mvgo^{\{i\}})- f(\mvgo)\le \go_i'-\go_i\le \go_i'$. Also $\go_i'$ is independent of $\mvgo$. Thus we have
\begin{align*}
	  \E[f(\mvgo)^2]	&\le \sum_{i=1}^N \E[(\go'_i)^2\ind\{e_i \text{ is essential for }\mvgo\}]
	=   \E[\go_i^2] \E[L_{n}]
\end{align*}                      
where $L_{n}$ is the number of pivotal edges for the configuration $\mvgo$.

Let $g(\cdot)$ be the function $g(x)=x|x|^{p-2}$. Using  similar decomposition as was done for $p=2$ case we have                      
\begin{align*}
	 \E[|f(\mvgo)|^p]  &=  \frac{1}{2}\sum_{i=1}^N \E[ (f(\mvgo) - f(\mvgo^{\{i\}}) )(g(\mvgo^{[i-1]}) - g(\mvgo^{[i]}) )].       
\end{align*}
Now   Lemma \ref{lem:tech1} and symmetry of $\go_i$ and $\go_i'$ imply that
\begin{align*}
	  \E[|f(\mvgo)|^p]  
	&\le a_{p}\sum_{i=1}^N \E\left[ | f(\mvgo) - f(\mvgo^{\{i\}}) |  | f(\mvgo^{[i-1]}) - f(\mvgo^{[i]}) |\right.\\
	&\qquad\qquad\left. \cdot \left(|f(\mvgo^{[i-1]})|^{p-2} + |f(\mvgo^{[i]})|^{p-2}\right)  \ind\{\go_i'>\go_i\} \right]
\end{align*}  
where $a_{p}=\max\{1,(p-1)/2\}$.
Note that $\go_i'>\go_i$, $f(\mvgo^{\{i\}})\neq f(\mvgo)$ and $f(\mvgo^{[i]}) \neq f(\mvgo^{[i-1]})$ imply that $0<f(\mvgo^{\{i\}})- f(\mvgo), f(\mvgo^{[i]}) - f(\mvgo^{[i-1]})\le \go_i' $ and the edge $e_i$ is essential for both the configurations $\mvgo$ and $\mvgo^{[i-1]}$. Moreover in that case we have 
\begin{align*}
	 |f(\mvgo^{[i]})|^{p-2}&\le |\ |f(\mvgo^{[i-1]})|+\go'_i|^{p-2}\\
	& \le 3 |f(\mvgo^{[i-1]})|^{p-2} + \max\{2,(2(p-3))^{p-3}\}(\go'_i)^{p-2}.  
\end{align*} 
The last line follows easily when $p\le 3$. For $p>3$ the last line follows by taking $\eps=e^{-1/(p-3)}$, using Jenson's inequality $(a+b)^{p-2}\le \eps^{3-p}x^{p-2}+(1-\eps)^{3-p}y^{p-2}$ and $(1-\eps)^{-1}\le \max\{2,2(p-3)\}$.  
Thus 
\begin{align*}
	  \E[|f(\mvgo)|^p]  
	&\le \sum_{i=1}^N \E\left[ (\go'_i)^2 \ind\{e_i \text{ is essential for } \mvgo^{[i-1]} \}\right.\\
	&\qquad\qquad \cdot \left. \left( 4 a_{p}|f(\mvgo^{[i-1]})|^{p-2} + b_{p}(\go'_i)^{p-2}\right) \right]
\end{align*} 
where  $b_{p}=a_{p}\max\{2,(2(p-3))^{p-3}\}$. Simplifying  we have 
\begin{align*}
	  \E[|f(\mvgo)|^p]  
	&\le \sum_{i=1}^N \E\left[ (\go'_i)^2 \ind\{e_i \text{ is essential for } \mvgo \} \left( 4a_{p}|f(\mvgo)|^{p-2} +b_{p} (\go'_i)^{p-2}\right) \right]\\
	&= 4a_p\E[(\go'_i)^2] \E[L_n |f(\mvgo)|^{p-2}] + b_p\E[(\go'_i)^{p}] \E[L_n]  
\end{align*}                                                                            
where $L_n$ is the number of pivotal edges in the configuration $\mvgo$. Let $y=\E[|f(\mvgo)|^p]^{(p-2)/p}$. Using H\"older's  inequality we have
\begin{align*}
	  y^{p/(p-2)}&= \E[|f(\mvgo)|^p]  \\
	&\le 4a_p \E[\go^2] \E[L_n^{p/2}]^{2/p} \E[|f(\mvgo)|^{p}]^{(p-2)/p} + b_p\E[\go^{p}] \E[L_n] \\
	&=  4a_p \E[L_n^{p/2}]^{2/p}  \E[\go^2]  y + b_{p} \E[L_n] \E[\go^{p}] .
\end{align*}                                                                 
Now Lemma \ref{lem:tech2} with $\gb=p/(p-2)$ gives that
\begin{align*}
	  \E[|f(\mvgo)|^p]^{2/p} &= y^{\gb-1} \le  
	  4a_p \E[L_n^{p/2}]^{2/p}\E[\go^2]  + ( b_p \E[L_n] \E[\go^{p}] )^{2/p}
\end{align*}
or
\[
   \E[|f(\mvgo)|^p] \le 2^{p/2-1} (2a_p)^{p/2} \E[L_n^{p/2}]\E[\go^2]^{p/2}  + 2^{p/2-1} b_p \E[L_n] \E[\go^{p}].  
\] 
Note that $2a_p\le p$ and $b_p\le 2^{p-1}p^{p-2}$. Hence simplifying we finally conclude that
\begin{align*}
	  \E[|f(\mvgo)|^p] \le  
	  (2p)^{p/2} \E[L_n^{p/2}]\E[\go^2]^{p/2}  + 2^{p/2} (2p)^{p-2} \E[L_n] \E[\go^{p}].
\end{align*} 
Now we are done.                                                                                                              
\end{proof}

 It is easy to see that $L_{n}(G)$ is  smaller than the length of any length minimizing path. In fact the random variable $L_{n}(G)$ grows linearly with $n$. The following well-known result due to Kesten~\cite{kes86} will be useful to get an upper bound on the length of a weight minimizing path. 

\begin{lemma}[Proposition $5.8$ in Kesten~\cite{kes86}]\label{lem:plen}
	If $F(0)<p_c(d)$  then there exist constants $0<a, b, c <\infty$ depending on $d$ and $F$ only, such that the probability that there exists a selfavoiding path $\cP$ from the origin which contains at least $n$ many edges but has $\go(\cP)<cn$ is smaller than $ae^{-bn}$. 
\end{lemma}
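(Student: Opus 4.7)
The plan is to compare with subcritical Bernoulli bond percolation. Since $F$ is right-continuous at $0$ and $F(0) < p_c(d)$, I would first pick $\delta > 0$ small enough that $p := F(\delta) < p_c(d)$. Calling an edge $e$ \emph{red} if $\go_e \leq \delta$ and \emph{blue} otherwise, the indicators $\ind\{\go_e \leq \delta\}$ are i.i.d.\ Bernoulli$(p)$, so the red edges form a subcritical Bernoulli bond percolation on $\dZ^d$.

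Next, observe that if $\cP$ is a self-avoiding path from $\mvzero$ with $|\cP| = k \geq n$ and $\go(\cP) < cn$, then each blue edge contributes more than $\delta$ to the weight, so $\cP$ contains at most $cn/\delta$ blue edges and at least $k - cn/\delta$ red edges. I would take $c$ small relative to $\delta$, so the blue edges are sparse ``defects'' in what is essentially a long red path. The probability in the lemma can then be controlled by a union bound over (i) the length $k \geq n$, (ii) the choice of $j \leq cn/\delta$ blue edges and their positions among the $k$ steps, and (iii) the at most $j + 1$ maximal red sub-paths into which $\cP$ decomposes, whose total length is at least $k - j$. For each such pattern, the expected number of admissible red sub-path configurations from a fixed starting vertex is bounded using the fact that for $p < p_c(d)$ the expected number of self-avoiding red paths of length $\ell$ from any fixed vertex decays exponentially in $\ell$. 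Picking $c$ small enough absorbs the combinatorial factor from the blue edges into this exponential decay and gives a bound of the form $a e^{-bn}$.

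The main obstacle is supplying the subcritical-percolation input. The naive bound $\mu_\ell p^\ell$, where $\mu_\ell \leq (2d-1)^\ell$ is the number of self-avoiding walks of length $\ell$ in $\dZ^d$, decays exponentially only when $p < 1/\mu(d)$, a strictly stronger hypothesis than $p < p_c(d)$. To handle the full range $p < p_c(d)$ I would invoke Menshikov's theorem on exponential decay of the connectivity function in subcritical percolation and use it to run a block-renormalization argument: partition $\dZ^d$ into cubes of large fixed side $m$, declare a cube ``bad'' if it is crossed by a red path of diameter at least $m/3$, use Menshikov's bound to make the probability of a bad cube small (uniformly in position), observe that badness is short-range dependent, and apply a standard stochastic-domination / large-deviation estimate for the resulting coarse-grained field. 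Feeding this back into the counting above, one obtains the desired exponential control on mostly-red SAWs of length $\geq n$. This renormalization step is the technical heart of Kesten's original proof in \cite{kes86}.
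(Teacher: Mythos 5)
First, a remark on the comparison you were asked to make: the paper does not prove this lemma at all --- it is imported verbatim as Proposition 5.8 of Kesten's Saint-Flour notes \cite{kes86} and used as a black box --- so you are reconstructing an external result. Your overall strategy (pick $\delta>0$ with $p:=F(\delta)<p_c(d)$ by right-continuity, call edges of weight $\le\delta$ red, observe that a path of length $k\ge n$ with $\go(\cP)<cn$ has at most $cn/\delta$ blue edges, and control long mostly-red self-avoiding paths by a coarse-graining based on exponential decay of connectivities) is indeed the strategy of Kesten's proof.

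There is, however, a genuine flaw in how you wire the pieces together. The ``fact'' on which your union bound rests --- that for every $p<p_c(d)$ the \emph{expected number} of red self-avoiding paths of length $\ell$ from a fixed vertex decays exponentially in $\ell$ --- is false. By linearity of expectation that expected number is \emph{exactly} $\mu_\ell\, p^\ell$ (each of the $\mu_\ell$ deterministic SAWs of length $\ell$ uses $\ell$ distinct edges and is all-red with probability precisely $p^\ell$): your ``naive bound'' is an identity, not an estimate, and hence cannot be improved by any argument. Since $\mu_\ell^{1/\ell}\to\mu(d)$ and $1/\mu(d)<p_c(d)$ at least in the flagship case $d=2$ (where $1/\mu(2)\approx 0.379<1/2=p_c(2)$), the expected count grows exponentially throughout the nonempty regime $1/\mu(d)<p<p_c(d)$, and the first-moment bookkeeping over red sub-path configurations is structurally dead there; nothing can be ``fed back into the counting above.'' The repair is to abandon expected counts and apply your block argument directly to the \emph{existence event}: a self-avoiding path from the origin with $k\ge n$ edges and at most $cn/\delta$ blue edges can be cut into order $k/m^{d}$ edge-disjoint sub-segments each of diameter of order $m$; all but at most $cn/\delta$ of these are entirely red, and each entirely red one certifies a bad $m$-cube, so after choosing $c/\delta$ small relative to the relevant power of $1/m$ one forces linearly many bad cubes in a coarse-grained lattice animal containing the origin. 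Your Menshikov-plus-finite-range-dependence estimate then bounds the \emph{probability} of this event by $ae^{-bn}$, which is exactly what the lemma asserts and is what Kesten's argument actually delivers. With the middle paragraph restructured in this way (a probability bound on the whole defected path rather than an expected count of red segments), the proof goes through.
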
  

Combining Lemma \ref{lem:mubd} and Lemma~\ref{lem:plen} we have the proof of Proposition~\ref{prop:mbd}.

\begin{proof}[\bf Proof of Proposition~\ref{prop:mbd}]
Note that $G_n=G$ for all $n$ clearly implies that $L_{n}(G)\le 3nk$ where $k=k(G)$ is the number of edges in $G$. This completes the proof for the case where the constants depend on $G$. 

Let $ \pi_n$ be the minimum number of edges in a weight minimizing path for $t_{n}(G_n)$. To complete the proof it is enough to show the following: if $G_n$'s are subgraphs of $\dZ^{d-1}$ and $F(0)<p_c(d)$ we have $\E[\pi_n^{p/2}]\le cn^{p/2}$ for some constant $c$ depending only on $d,p$ and $F$. We follow the idea from~\cite{kes93}. We have
\begin{align*}
	\pr(\pi_n>tn)&\le \pr(t_{n}(G_n)>ctn) + \pr(\text{there exists a self avoiding path $\cP$ }\\
	&\qquad \text{starting from $0$ of at least $tn$ edges but with $\go(\cP)<ctn$}). 
\end{align*}                                                      
Now using Lemma~\ref{lem:plen} we see that the second probability decays like $ae^{-btn}$. And the first probability is bounded by $\pr(S_n>ctn)$ where $S_n$ is the weight of the straight line path joining $(0,o)$ to $(n,o)$. Clearly $S_n$ is sum of $n$ many i.i.d.~random variables. Thus we have   
\begin{align*}
	\E[\pi_n^{p/2}] &=\int_0^\infty \frac{n^{p/2}p}{2}t^{p/2-1}\pr(\pi_n>tn)\; dt\\
	&\le \int_0^\infty \frac{n^{p/2}p}{2}t^{p/2-1}\pr(S_n>ctn)\; dt + \int_0^\infty \frac{n^{p/2}p}{2}t^{p/2-1}ae^{-btn}\; dt   \\
	&= c^{-p/2}\E[S_n^{p/2}] + \frac{ap}{2b^{p/2}}\Gamma(p/2)
	\le c_1n^{p/2}
\end{align*}      
where the constant $c_1$ depends on $d,p$ and $F$. The result for $a_{n}(G)$ and $T_{n}(G)$ follow by Lemma~\ref{lem:p2s} that 
\begin{align*}
	\E[|t_{n}(G)-a_{n}(G)|^p] &\le \E[|t_{n}(G)-T_{n}(G)|^p]\le \E[(2D\go)^p]
\end{align*} 
for all $n,G$ when $\E[\go^p]<\infty$ for some $p\ge 2$ with $ \go\sim F$ and $D$ is the diameter of $G$. 
\end{proof} 

\begin{proof}[Proof of the first technical Lemma \ref{lem:tech1}]
	For $x,y\in \Real/\{0\}, x\neq y$, let $z=x/y$. Then we have
	\[ \frac{x\abs{x}^{p-2} - y\abs{y}^{p-2} }{(x-y)(\abs{x}^{p-2}+\abs{y}^{p-2})} =\frac{z\abs{z}^{p-2}-1}{(z-1)(\abs{z}^{p-2}+1)}. \]
	Now, the lemma follows from the fact that
	\begin{align*}
	c_{p} :=\sup_{z\in \Real} \abs{\frac{z\abs{z}^{p-2}-1}{(z-1)(\abs{z}^{p-2}+1)}}
	\le\max\{1,(p-1)/2\}.
	\end{align*}
	To prove this note that, by $p>2$ we have \[  \sup_{z\ge0}   \frac{z^{p-1}+1}{(z+1)(z^{p-2}+1)} \le 1\]
	and
\begin{align*} 
	 \sup_{z\ge0}   \frac{{z^{p-1}-1}}{(z-1)(z^{p-2}+1)} 
	 &= \left( 1 - \sup_{x\ge 0}\frac{\sinh \frac{p-3}{p-1}x }{\sinh x}\right)^{-1}
	 =
\begin{cases}
	\left(1 - \frac{p-3}{p-1}\right)^{-1} &\text{ if } p>3,\\
	\left(1 - 0\right)^{-1} &\text{ if }  p  \le 3
\end{cases}
\end{align*}
and the line can be written succinctly as $\max\{1,(p-1)/2\}$.
\end{proof}                                 
 
\begin{proof}[Proof of the second technical Lemma \ref{lem:tech2}]
	Define $f(a,b):= (b+a^{1-1/\gb})^{1/(\gb-1)}$ and $g(a,b):=\sup\{y\ge 0: y^{\gb}\le a + by\}$. Without loss of generality assume $b>0$. Then it is easy to see that 
	\[
	g(a,b)= b^{1/(\gb-1)}g(ab^{-\gb/(\gb-1)},1)
	\text{ and } 
	f(a,b)=b^{1/(\gb-1)}f(ab^{-\gb/(\gb-1)},1).
	\]
	 So again w.l.g. we can assume that $b=1$.   Clearly $f(a,1)\ge 1, g(a,1)\ge 1$.

	Let $F:[1,\infty)\to \dR$ be the strictly increasing function $F(x):=x^{\gb}-x$. Note that $F(g(a,1))=a$. Now $y> f(a,1)$ implies that $y^{\gb}-y=F(y)> F(f(a,1))=f(a,1)(f(a,1)^{\gb-1}-1)\ge a^{1/\gb}(1+a^{(\gb-1)/\gb}-1)=a$. Hence the upper bound is proved.
\end{proof}


\section{Proof of Theorem~\ref{thm:maingr}} 
\label{sec:maingr} 

The proof of Theorem~\ref{thm:maingr} will be given in several steps. First we will show that it is enough to prove the CLT for $T_n(G_n)$ after proper centering and scaling. Then we will prove that $T_n(G_n)$ is ``approximately'' a sum of i.i.d.~random variables each having distribution $T_l(G_n)$ and an error term where $l$ depends on $n$. Finally, \change{writing $T_l(G_n)$'s inductively as approximate i.i.d.~sums} (the `renormalization steps') and controlling the error in each step, we will complete the proof. Recall that the notations $a_{n}=O(b_{n})$ and $a_{n}=o(b_{n})$, respectively, mean that $a_{n}\le C b_{n}$ for all $n\ge 1$ for some constant $C<\infty$ and $a_{n}/b_{n}\to 0$ as $n\to\infty$. Throughout the proof $c$ will denote a constant that depends only on $q,F$ and whose value may change from line to line.
   
\subsection{Reduction to $T_{n}(G_n)$} 
\label{sub:TnGn}
Let us first recall the setting. We have a sequence of nondecreasing graphs $G_n$ with $G_n$ having  diameter $d_n$ and $k_n$ edges. We also have $k_n=O(d_n^\theta)$ for some fixed $\theta\ge 1$. 
Define 
\[
\mu_n(G):=\E[T_{n}(G)] \text{ and } \gs_n^2(G):=\var(T_{n}(G))
\]                                                          
for any integer $n\ge 1$ and any finite connected graph $G$. 
 
Now from Lemma~\ref{lem:p2s} we have 
\[
	\E[|a_{n}(G_n)-T_{n}(G_n)|^p]\le 2^p d_n^p \E[\go^p]
\]                                                          
for all $n$ when $\E[\go^p]<\infty$ for a typical edge weight $\go$. Moreover, from Proposition~\ref{prop:varbd} we have $\gs_n^2(G_n)\ge cnk_n^{-1}$ for all $n$ for some absolute constant $c>0$ when $d_n=o(n)$. Thus when $d_n^2=o(nk_n^{-1})$ (which is satisfied if $d_n=o(n^{1/(2+\theta)})$), we have
\[
\frac{T_{n}(G_n)-\mu_n(G_n)}{\gs_n(G_n)} - \frac{a_n(G_n)-\E[a_n(G_n)]}{\var(a_n(G_n))^{1/2}} \stackrel{L^2}{\longrightarrow} 0.
\]
Hence it is enough to prove CLT for $(T_{n}(G_n)-\mu_n(G_n))/\gs_n(G_n)$ when $d_n=o(n^{1/(2+\theta)})$. From now on we will assume that 
\[
d_n=o(n^\ga)\text{ with }\ga< 1/(2+\theta) \text{ fixed}.
\]
       
\subsection{Approximation as an i.i.d.~sum} 
\label{sub:iidsum}
In Lemma \ref{lem:ulbd} we will prove a relation between side-to-side first-passage times in large and small cylinders and this will be crucial to the whole analysis. Fix an integer $n$ and a finite connected graph $G$. Let $n=ml+r$ with $0\le r<l$ where $l\ge 1$ is an integer. 

We  divide the cylinder graph $[n]\times G$ horizontally into  $m$ equal-sized smaller cylinder graphs $R_1,\ldots,R_m$ with  $R_i=[(i-1)l,il]\times G, i=1,2,\ldots,m$  each having width $l$ and a residual graph $R_{m+1}=[ml,n]\times G$. Let 
\begin{align}\label{def1}
	    X_i=T_{(i-1)l,il}(G)
\end{align}
be the side-to-side first-passage time for the product graph $R_i$ for $i=1,2,\ldots,m$ (see Definition~\ref{def:TabG}). We also define $X_{m+1}=T_{ml,n}(G)$ for the residual graph $R_{m+1}$. Clearly $X_{m+1}=0$ if $r=0$.  Note that $X_i$'s depend on $n$ and $G$, but we will suppress $n,G$ for readability.   We have the following relation. This is a generalization of Lemma~\ref{lem:p2s}.

\begin{figure}[htbf]
\includegraphics[width=4.5in]{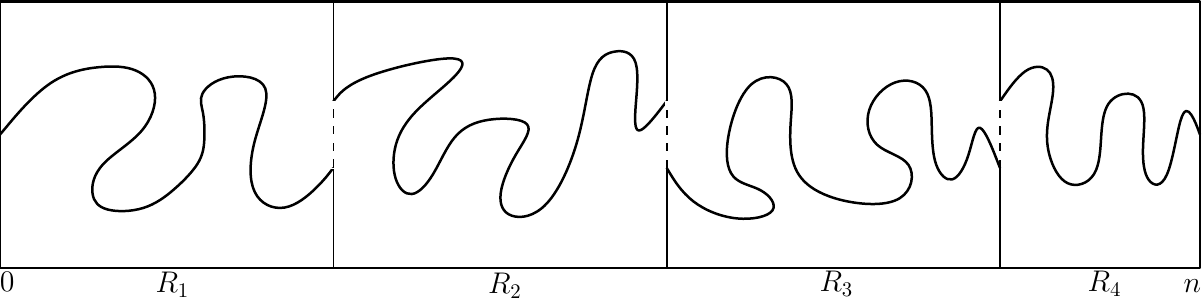}
\end{figure}

\begin{lemma}\label{lem:ulbd}
   Let $n,G$ be fixed. Let $X_i$ be as defined in \eqref{def1}. Then the random variable 
\[
    Y:= T_{n}(G) - ( X_1+X_2+\cdots+X_{m+1})
    \]
     is nonnegative and is stochastically dominated by $S_{mD}$ where $S_{mD}$ is sum of $mD$ many i.i.d.~random variables each having distribution $F$ and $D$ is the diameter of $G$.
	Moreover, $X_1,\ldots,X_m$ are i.i.d.~ having the same distribution as $T_{l}(G)$, $X_{m+1}$ has the distribution of $T_{r}(G)$ and $X_{m+1}$ is independent of $X_1,\ldots,X_m$.
\end{lemma}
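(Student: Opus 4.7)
The plan is to verify three things: (i) independence and the claimed distributions of the $X_i$, (ii) $Y\ge 0$, and (iii) $Y$ is stochastically dominated by $S_{mD}$. Item~(i) is immediate from the observation already made in Section~\ref{sec:main} following~\eqref{def:TabG}. Items (ii) and (iii) amount to sandwiching $T_n(G)$ between a sum of the $X_i$'s from below, using the minimizing path of $T_n(G)$, and a sum of the $X_i$'s plus a controlled error from above, by constructing an explicit concatenated path.

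For (i), the variable $T_{a,b}(G)$ depends only on the weights of edges in $[a+1,b-1]\times G$ together with the vertical edges joining this interior to the boundary layers $\{a\}\times G$ and $\{b\}\times G$. Hence $X_1,\dots,X_{m+1}$ depend on pairwise-disjoint edge sets and are therefore independent, and translation-invariance of the i.i.d.\ weights yields $X_i\equald T_l(G)$ for $1\le i\le m$ and $X_{m+1}\equald T_r(G)$ (with $X_{m+1}=0$ when $r=0$).

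For (ii), fix a self-avoiding minimizer $P^*$ of $T_n(G)$ (minimizers may be taken self-avoiding since weights are nonnegative), viewed as a vertex sequence $v_0,\dots,v_N$ with $v_0\in\{0\}\times G$ and $v_N\in\{n\}\times G$. A truncation argument shows that $v_0$ is the only visit of $P^*$ to $\{0\}\times G$ and $v_N$ the only visit to $\{n\}\times G$. For each $i=1,\dots,m$ (and analogously $i=m+1$ when $r>0$), let $T_i$ be the first time $P^*$ enters $\{il\}\times G$ (resp.\ $\{n\}\times G$), and let $S_i$ be the last time before $T_i$ that $P^*$ is in $\{(i-1)l\}\times G$. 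Because the $x_1$-coordinate changes by one per step, the sub-walk from $S_i$ to $T_i$ must leave $\{(i-1)l\}\times G$ upward at its first step (otherwise it would return to $\{(i-1)l\}\times G$ before $T_i$, contradicting the definition of $S_i$), stay in $[(i-1)l+1,il-1]\times G$, and enter $\{il\}\times G$ at its last step. It is therefore an admissible path for $T_{(i-1)l,il}(G)=X_i$, so its weight is at least $X_i$. Self-avoidance of $P^*$ ensures the sub-walks for different $i$ are edge-disjoint, and the remaining edges of $P^*$ carry nonnegative weight; summing yields $\go(P^*)\ge \sum_{i=1}^{m+1}X_i$, i.e.\ $Y\ge 0$.

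For (iii), construct an explicit competitor path $\tilde P$ by concatenating deterministically-chosen minimizers $P_i^\star$ of $X_i$, which enter $R_i$ at some $((i-1)l,u_i^-)$ and exit at $(il,u_i^+)$, joined across each interface layer $\{il\}\times G$ (with $1\le i\le m$) by a shortest graph-distance path $B_i$ in $G$ from $u_i^+$ to $u_{i+1}^-$; such a bridging path has at most $D$ edges. Then $T_n(G)\le \go(\tilde P)=\sum_{i=1}^{m+1}X_i+\sum_{i=1}^{m}\go(B_i)$, so $Y\le \sum_{i=1}^{m}\go(B_i)$. The edges supporting each $\go(B_i)$ lie in $\{il\}\times G$, hence are disjoint across $i$ and globally independent of the data $\{(X_j,u_j^\pm)\}_j$, which depend only on edges away from the interface layers. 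Conditional on that data, each $\go(B_i)$ is a sum of at most $D$ i.i.d.\ weights from $F$, and by nonnegativity can be stochastically dominated by an independent sum of exactly $D$ i.i.d.\ $F$-weights; these $m$ dominating sums can be taken mutually independent, yielding the claimed domination by $S_{mD}$. The delicate point throughout (iii) is preserving independence while the endpoints $u_i^\pm$ are themselves random, which works precisely because the bridging edges sit in layers disjoint from all edges determining the $X_j$'s.
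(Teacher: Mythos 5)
Your proof is correct and follows essentially the same approach as the paper's: decompose a (self-avoiding) minimizer of $T_n(G)$ into edge-disjoint admissible sub-paths for the $X_i$ to get $Y\ge 0$, and concatenate deterministically chosen minimizers of the $X_i$ with bridging paths in the interface layers $\{il\}\times G$, whose weights are independent of the $X_i$'s and their endpoints, to obtain the stochastic domination by $S_{mD}$. The only cosmetic difference is in how the sub-paths are located along the minimizer (you use first-entry-into-$\{il\}\times G$ followed by last-exit-from-$\{(i-1)l\}\times G$ before it, while the paper uses last-exit followed by first-entry-after), but both choices yield the required disjoint admissible sub-paths.
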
  
\begin{proof}
 First of all, it is easy to see that $X_i$ depends only on the  weights for the edge set $\{e: e \text{ is an edge in } [(i-1)l,il]\times G\}\setminus\{e\mid e \text{ is an edge in } \{(i-1)l\}\times G \text{ or } \{il\}\times G \}$. Thus, $X_1,\ldots,X_m$'s are i.i.d.~ having the same distribution as $T_{l}(G)$.        

Now choose a minimal weight path $\cP^*$ joining the left boundary $\{0\}\times G$ to the right boundary $\{n\}\times G$ (if there are more than one path one can use some deterministic rule to break the tie). The path $\cP^*$ hits all the boundaries $\{il\}\times G$ at some vertex for $i=0,1,\ldots,m$. Let $u_i,v_i, i=0,1,\ldots,m$ be the vertices in $G$ such that for each $i$, $\cP^*$ hits $\{il\}\times G$ for the last time at the vertex $(il,u_i)$ and 	after that it hits the boundary $\{(i+1)l\}\times G$ at the vertex $((i+1)l,v_i)$ for the first time (take $(m+1)l$ to be $n$). Clearly if $\cP^*$ hits $\{il\}\times G$ only at a single vertex then $u_i=v_{i-1}$. Now the part of $\cP^*$ between the vertices $(il,u_i)$ and $((i+1)l,v_i)$ is a path in $[il,(i+1)l]\times G$ and hence has weight more than $X_i$. But all these parts are disjoint. Hence we have $T_{n}(G)=\go(\cP^*)\ge \sum_{i=1}^{m+1} X_i$.  

Now to prove upper bound for $Y$, let $\cP^*_i$ be a minimal weight path joining the left boundary $\{il\}\times G$ to the right boundary $\{(i+1)l\}\times G$ and  achieving the weight $X_i$. Suppose $\cP^*_i$ hits $\{il\}\times G$ at $(il,w_i)$ and hits $\{(i+1)l\}\times G$ at $((i+1)l,z_i)$ for $i=0,1,\ldots,m$. Let $\cP_i$ be a minimal length path in $\{il\}\times G$ joining $(il,z_{i-1})$ to $(il,w_i)$ for $i=1,2,\ldots,m$. Consider the concatenated path $\cP_0^*,\cP_1,\cP_1^*,\cP_2,\ldots,\cP_{m}^*$ joining $(0,w_0)$ to $(n,z_{m+1})$. By minimality of weight we have 
\[
	T_{n}(G)\le \sum_{i=1}^{m}\left(X_i+\go(\cP_i)\right) +X_{m+1}.
\]                                                          
Thus we have $Y=T_{n}(G)- \sum_{i=1}^{m+1} X_i \le \sum_{i=1}^{m}\go(\cP_i)$. Clearly $\sum_{i=1}^{m}\go(\cP_i)$ is a sum of $\sum_{i=1}^{m} d(z_{i-1},w_i)$ many i.i.d.~random variables each having distribution $F$ where $d(\cdot,\cdot)$ is the graph distance in $G_n$. But we have $\sum_{i=1}^{m} d(z_{i-1},w_i)$ $\le$  $mD$ by definition of the diameter. Now $F$ is supported on $\dR^+$. Thus we are done.  
\end{proof}       
 
An obvious corollary of Lemma \ref{lem:ulbd} is the following.      
\begin{corollary}\label{cor:mvest} 
	For any integer $m,l,r$ and connected graph $G$ we have                           
\[
\abs{\mu_{ml+r}(G) - (m\mu_l(G)+\mu_r(G))} \le  {m D}\mu
\]
 and
\begin{align*}
	\abs{ \gs_{ml+r}(G)- (m\gs^2_l(G)+\gs_r^2(G))^{1/2} }\le mD(\mu^2+\gs^2)^{1/2}  
\end{align*}         
where $D$ is the diameter of $G$.
\end{corollary}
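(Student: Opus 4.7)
The plan is to derive both bounds directly from the decomposition supplied by Lemma~\ref{lem:ulbd}. Write $n = ml+r$ and decompose
\[
T_n(G) = U + Y, \qquad U := X_1 + \cdots + X_{m+1},
\]
where $X_1,\ldots,X_m$ are i.i.d.\ copies of $T_l(G)$, $X_{m+1}$ is an independent copy of $T_r(G)$, and $Y \ge 0$ is stochastically dominated by $S_{mD}$, the sum of $mD$ i.i.d.\ random variables from $F$. In the degenerate case $mD = 0$ (so $m=0$ or $G$ is a single vertex) both inequalities are trivial, so we may assume $mD \ge 1$ in what follows.

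For the mean bound, taking expectations and using the independence of the $X_i$'s gives
\[
\mu_{ml+r}(G) = m\mu_l(G) + \mu_r(G) + \E[Y],
\]
and stochastic domination yields $0 \le \E[Y] \le \E[S_{mD}] = mD\mu$, which is the desired estimate (and in fact shows the difference is nonnegative, sharpening the absolute value statement).

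For the variance bound, set $\mu := \E[T_n(G)]$ and use the triangle inequality for the $L^2$ norm. Since $U$ is a sum of independent terms, $\var(U) = m\gs_l^2(G) + \gs_r^2(G)$, so
\[
\bigl|\gs_{ml+r}(G) - (m\gs_l^2(G)+\gs_r^2(G))^{1/2}\bigr|
= \bigl|\,\norm{T_n(G) - \E T_n(G)}_2 - \norm{U - \E U}_2\,\bigr|
\le \norm{Y - \E Y}_2 \le \norm{Y}_2.
\]
Stochastic domination (which for nonnegative variables controls all increasing moments) gives $\E[Y^2] \le \E[S_{mD}^2] = (mD)^2\mu^2 + mD\gs^2$, and since $mD \ge 1$ we have $mD\gs^2 \le (mD)^2\gs^2$, hence $\norm{Y}_2 \le mD(\mu^2+\gs^2)^{1/2}$, completing the proof.

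There is no real obstacle here; the only point requiring slight care is upgrading stochastic domination from a statement about tail probabilities into a second-moment bound on $Y$, which is immediate because $Y$ and $S_{mD}$ are both nonnegative and $x\mapsto x^2$ is monotone on $[0,\infty)$.
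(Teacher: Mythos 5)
Your proof is correct and follows essentially the same route as the paper: decompose $T_n(G)$ via Lemma~\ref{lem:ulbd}, take expectations for the mean bound, and use the $L^2$ triangle inequality together with $\var(Y)\le\E[Y^2]\le\E[S_{mD}^2]$ for the variance bound. You are slightly more explicit than the paper in two harmless spots --- flagging the $mD\ge 1$ case needed to absorb $mD\gs^2$ into $(mD)^2\gs^2$, and spelling out why stochastic domination of nonnegative variables upgrades to a second-moment comparison --- but these are exactly the points the paper takes for granted, so there is no substantive divergence. (One cosmetic slip: you set ``$\mu:=\E[T_n(G)]$'' and never use it, which also clashes with the paper's standing use of $\mu$ for the mean of $F$; just delete that sentence.)
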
   
\begin{proof}
	Taking expectation of $Y$ in Lemma~\ref{lem:ulbd} with $n=ml+r$ we have $\E[Y]=\mu_n(G) - m\mu_l(G) - \mu_r(G)$ and 
	$0\le \E[Y]\le mD\mu$.
	                                                           
	Moreover, we have 
	\begin{align*}
	   &\abs{\var(T_n(G))^{1/2} - \var(T_n(G)-Y)^{1/2}}\\ 
	&= \abs{\norm{T_n(G)-\E[T_n(G)]}_2 - \norm{T_n(G)-Y - \E[T_n(G)-Y]}_2}\\
	&\le \norm{Y-\E[Y]}_2\le (\E[Y^2])^{1/2}\le mD(\mu^2+\gs^2)^{1/2}.
	\end{align*}  
	Now the result follows since $ T_n(G)-Y =  \sum_{i=1}^{m+1} X_i$ and $X_i$'s are independent of each other.
\end{proof}


\subsection{Lyapounov condition} 
\label{sub:lyapounov_condition}
From here onwards, we return to using $n$ in subscripts and superscripts. From Lemma~\ref{lem:ulbd} and Corollary \ref{cor:mvest} clearly we have
\begin{align}
	   &\E|T_n(G_n)- \mu_n(G_n) - ( X_1^{(n)} +X_2^{(n)} +\cdots X_{m}^{(n)} - m\mu_l(G_n)) |\notag \\   
	&\le   \E|T_n(G_n) - ( X_1^{(n)} +X_2^{(n)} +\cdots X_{m+1}^{(n)})  |+md_n\mu+\E|X_{m+1}^{(n)}-\mu_r(G_n)| \notag\\
	&\le  2md_n\mu+\gs_r(G_n) \label{eq:ulm} 
\end{align}                 
where $X_i^{(n)},i=1,2,\ldots,m$ are defined as in \eqref{def1} 
and $n=ml+r$. We will take 
\[
	l=\max\{\lfloor n^\gb\rfloor,1\} \text{ for some fixed } \gb\in (2/(2+\theta),1) \text{ and } m=\lfloor n/l \rfloor.
\]       
Then we have $d_{n}^{2}=o(l)$ and all the lower and upper bounds on moments are valid for $T_{l}(G_{n})$.     
The dependence of $m,l$ on $n$ is kept implicit.  Note that $0\le r<l$. 
Moreover,  writing $l-r$ in place of $l$ and $1$ in place of $m$, we get from Corollary \ref{cor:mvest} that
\begin{align}
   \gs_r(G_n) 
	&\le \gs_l(G_n) + (\mu^2+\gs^2)^{1/2} d_n. \label{eq:errbd}
\end{align}        
Thus from \eqref{eq:ulm} we have 
\begin{align}
	   &\E\abs{ \frac{T_n(G_n)- \mu_n(G_n)}{\sqrt{m}\gs_l(G_n)} - \frac{\sum_{i=1}^m (X_i^{(n)} - \mu_l(G_n))}{\sqrt{m}\gs_l(G_n)} }\notag\\
	&\le \frac{2md_n \mu + \gs_r(G_n)}{\sqrt{m}\gs_l(G_n)} \le \frac{1}{\sqrt{m}} +  3(\gs^2+\mu^2)^{1/2}\frac{\sqrt{m} d_n}{\gs_l(G_n)}.   \label{eq:erb}
\end{align}
Recall that we have $l\sim n^\gb$ for some $\gb<1$ and thus $m\sim n^{1-\gb}$. From the lower bound for the variance in Proposition~\ref{prop:varbd} (as $d_{n}=o(l)$) we have 
\[
\frac{m d_n^2}{\gs^2_l(G_n)}\le \frac{cm^2d_n^2k_n}{n},
\]
where $c$ is some absolute constant. 
By our assumption on $m,d_{n}$ and $k_{n}$ we have $m^2d_n^2k_n=o(n)$ when
$
	      \ga \le (2\gb-1)/(2+\theta)
$
which is true for some $\gb<1$ as $\ga<1/(2+\theta)$.
  Hence  $(T_n(G_n)-\mu_n(G_n))/\sqrt{m}\gs_l(G_n)$ has the same asymptotic limit as 
\begin{align}
\frac{\sum_{i=1}^m X_i^{(n)} - m\mu_l(G_n)}{\sqrt{m}\gs_l(G_n)}\label{eq:2clt}
\end{align}  
 as $n\to\infty$ when
 \begin{align} \label{eq:kndn}
	      \ga \le \frac{2\gb-1}{2+\theta} \text{ for some } \gb\in \left(\frac{2}{2+\theta},1\right).
\end{align} 
 
Now $X_i^{(n)},i=1,2,\ldots,m$ are i.i.d.~random variables with finite second moment, hence by the CLT for triangular arrays  it is expected that 
\eqref{eq:2clt}                                                         
has standard Gaussian distribution asymptotically. However we cannot expect CLT for all values of $\gb$. 

 Let $s_n^2:=m\gs_l^2(G_n)$ be the variance of $\sum_{i=1}^m X_i^{(n)}$. To use Lindeberg condition for triangular arrays of i.i.d.~random variables we need  to show that
\begin{align*}
	\frac{m}{s_n^2}\E[\tilde{T}_l^2\ind\{|\tilde{T}_l|\ge \eps s_n\}]\to 0 \text{ as } n   \to\infty
\end{align*}  
for every $\eps>0$ where $\tilde{T}_l= T_{l}(G_n)-\mu_l(G_n)$. 
%
%
%
However, any bound using the relation $T_{l}(G_n)\le S_{l}$ where $S_l$ is the weight of the straight line path joining $(0,o)$ and $(l,o)$, gives rise to the condition $\theta \ga\le 1-2\gb$. The last condition is contradictory to \eqref{eq:kndn}. The difficulty arises from the fact that the lower and upper bounds for the variances are not tight.

Still we can prove a CLT by using estimates for the moments of $\tilde{T}_l(G_n)$ from Proposition~\ref{prop:mbd} and using a blocking technique which is reminiscent of the renormalization group method. Note that Lindeberg condition follows from the Lyapounov condition 
\begin{align}\label{eq:lya}
	\frac{m}{s_n^{p}}\E[|T_{l}(G_n)-\mu_l(G_n)|^p]\to 0 \text{ as } n\to \infty \text{ for some }p>2
\end{align}
and thus  it is enough to prove \eqref{eq:lya} for some $\gb\in (2/(2+\theta),1)$ where $l=\max\{\lfloor n^\gb\rfloor,1\}, m=\lfloor n/l\rfloor, s_n^2=m\gs_l^2(G_n)$. We also need to satisfy \eqref{eq:kndn} to complete the proof of Theorem~\ref{thm:maingr}.
                  

\subsection{A technical estimate} 
\label{sub:a_technical_estimate}
We need the following technical estimate for the next ``renormalization'' step.
The lemma gives an upper bound on the moment of sums of i.i.d.~random variables. It is known as Rosenthal's inequality (see~\cite{rose70}) in the literature.

\begin{lemma}\label{lem:iidbd}
	Let $Y_i,i=1,2,\ldots,m$ be i.i.d.~random variables with mean zero and  $\E[Y_i^{p}]<\infty$ for some  $p\ge 2$. Then we have
	\begin{align}
		\E[|Y_1+Y_2+\cdots+Y_m|^{p}]
	&\le A_{p}(  m\E[Y^{p}] + (m\E[Y^2])^{p/2}) 
	\end{align} 
	where $A_p$ is a constant depending only on $p$.                                    
\end{lemma}
\begin{proof} 
For simplicity we present the proof when $p=2q$ is an even integer.   
Let $Y\equald Y_1$ and $S_{m}=Y_{1}+\cdots+Y_{m}$. For $\mva=(a_1,a_2,\ldots,a_{2q})\in \dZ_+^{2q}$, we will denote $\sum_{i=1}^{2q}a_i$ by $|\mva|$ and $\sum_{i=1}^{2q}ia_i$ by $z(\mva)$. To estimate $\E[S_{m}^{2q}]$, we will use the following decomposition which is an easy exercise in combinatorics. We have
\begin{align*}
	\E[S_{m}^{2q}]&=\sum_{\mva \in \dZ_+^{2q}: z(\mva)=2q} \frac{(2q)!}{\prod_{i=1}^{2q}i!^{a_i}a_i!} (m)_{|\mva|}  \prod_{i=1}^{2q} \E[Y^i]^{a_i} 
\end{align*}        
where $(m)_k:=m!/(m-k)!\le m^{k}$.  
Note that here we used the fact that $Y_i$'s are i.i.d..                        
Since $\E[Y]=0$ we can and we will assume that $a_1=0$. Thus using H\"{o}lder's inequality we have
\begin{align*}
	\E[S_{m}^{2q}]
	&\le \sum_{z(\mva)=2q} \frac{(2q)!}{\prod_{i=2}^{2q}i!^{a_i}a_i!} (m)_{|\mva|}  \prod_{i=2}^{2q} \E[|Y|^i]^{a_i}    \\
	&\le  \sum_{z(\mva)=2q} \frac{(2q)!}{\prod_{i=2}^{2q}i!^{a_i}a_i!} m^{|\mva|} \prod_{i=2}^{2q} \E[Y^2]^{\frac{a_i(q-i/2)}{q-1}} \E[Y^{2q}]^{\frac{a_i(i/2-1)}{q-1}}    \\ 
	&\le  \sum_{z(\mva)=2q} \frac{(2q)!}{\prod_{i=2}^{2q}i!^{a_i}a_i!} (m^q  \E[Y^2]^q)^{\frac{|\mva|-1}{q-1}} (m\E[Y^{2q}])^{\frac{q-|\mva|}{q-1}}.
\end{align*} 
Note that $2|\mva|\le z(\mva)=2q$ as $a_1=0$.
Now using the fact that $x^{\ga}y^{1-\ga}\le \ga x + (1-\ga)y$ for all $x,y\ge0, \ga\in[0,1]$ we finally have
\begin{align} 
	\E[S_m^{2q}]
	&\le  A_{q}(m\E[Y^{2q}] + m^{q}\E[Y^2]^{q})    \label{eq:iidsum2}
\end{align} 
where
\begin{align*}
A_{q}:= \sum_{z(\mva)=2q} \frac{(2q)!}{\prod_{i=2}^{2q}i!^{a_i}a_i!}
\end{align*}
is a constant depending only on $q$.
\end{proof}

\subsection{Renormalization Step} 
\label{sub:renormalization_step}
Now we are ready to start our proof of the Lyapounov condition.
For simplicity we will write  $T_{l}(G)-\mu_{l}(G)$ imply as $\tilde{T}_{l}(G)$. 
Recall that
\[
\nu=\lim_{n\to\infty} \frac{\E[T_n(G_n)]}{n}.
\]

\begin{lemma}\label{lem:tclt}
	Suppose that $\nu>0$ and $\E[\go^p]<\infty$ for some $p>2$ where $\go$ is a typical edge weight. Suppose either $G_n=G$ for all $n$ or $G_n$'s are subgraphs of $\dZ^{d-1}$. Let $l=\max\{\lfloor n^\gb\rfloor,1\}$, $d_n=o(n^\ga)$ with $2\ga< \gb$ and $k_n=O(d_n^\theta)$ for fixed $\theta\ge 1$. Suppose that  there exist $t\ge 1$ real numbers $\gb_i, i=1,2\ldots,t$ such that  $2\ga<\gb_t<\gb_{t-1}<\cdots<\gb_1=\gb$ and we have
	\begin{align*}
		\ga &\le \frac{1 - 2(\gb_i-\gb_{i+1}) - (1-\gb_i)/q}{2+\theta} \text{ for all } i=1,2,\ldots,t-1,\\
		\text{ and } \ga &\le \frac{q-1}{q}\cdot \frac{1 -\gb_t}{\theta}
	\end{align*}                                             
	where   $q= p/2 $.	Then we have 
	\begin{align*}
		\frac{\sum_{i=1}^m X_i^{(n)} - m\mu_l(G_n)}{\sqrt{m}\gs_l(G_n)} \weakc N(0,1)
	\end{align*}                                                                 
	as $n\to \infty$ where $X_i^{(n)}$'s are i.i.d.~with $X_i^{(n)}\equald T_{l}(G_n)$.      
\end{lemma}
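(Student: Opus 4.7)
The plan is to verify the Lyapounov condition for the i.i.d.\ summands $X_i^{(n)}-\mu_l(G_n)$, namely
$$\frac{m\,\E[|\tilde T_l(G_n)|^{2q}]}{(m\gs_l^2(G_n))^q}\longrightarrow 0\quad\text{as }n\to\infty,$$
for a suitable $q$: in Case~I one uses the $p$-th moment directly (with $q=p/2$, and the argument below goes through without requiring $q$ to be an integer); in Case~II one takes the integer $q=\lfloor p/2\rfloor$ so that Lemma~\ref{lem:iidbd} applies. Since Proposition~\ref{prop:varbd} supplies $\gs_l^2(G_n)\ge cl/k_n$, the denominator is at least of order $(n/k_n)^q$, and it suffices to show $\E[|\tilde T_l(G_n)|^{2q}]=o(n^{q-1}l/k_n^q)$.

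In Case~I, I would apply Proposition~\ref{prop:mbd} with $p$ directly to get $\E[|\tilde T_l|^p]\le cl^{p/2}$. Substituting $l\sim n^\gb$ and $k_n\sim n^{\theta\ga}$, the Lyapounov ratio is bounded by $k_n^{p/2}/m^{p/2-1}$, which tends to zero precisely when $p\theta\ga<(p-2)(1-\gb)$---exactly the stated hypothesis.

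In Case~II, the idea is to iterate Lemma~\ref{lem:ulbd} down the scales $l_1>\cdots>l_t$, pairing each step with Lemma~\ref{lem:iidbd}. At level $i$, the decomposition $T_{l_i}=\sum_{j=1}^{m_i}X_j+X_{m_i+1}+Y_i$, with $X_j\equald T_{l_{i+1}}$ i.i.d., $X_{m_i+1}\equald T_{r_i}$, and $Y_i\ge 0$ stochastically dominated by a sum of $m_id_n$ i.i.d.\ copies of $F$, yields via Lemma~\ref{lem:iidbd}
\[
\E[|\tilde T_{l_i}|^{2q}]\le c\bigl(m_i^q\gs_{l_{i+1}}^{2q}+m_i\,\E[|\tilde T_{l_{i+1}}|^{2q}]+l_{i+1}^q+(m_id_n)^{2q}\bigr).
\]
The crucial move is to convert the ``Gaussian'' piece using Corollary~\ref{cor:mvest}, $\gs_{l_{i+1}}^2\le c\gs_{l_i}^2/m_i+cm_id_n^2$, whence $m_i^q\gs_{l_{i+1}}^{2q}\le c\gs_{l_i}^{2q}+c(m_id_n)^{2q}$. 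Iterating for $i=1,\dots,t-1$ and closing at scale $l_t$ via Proposition~\ref{prop:mbd}'s bound $\E[|\tilde T_{l_t}|^{2q}]\le cl_t^q$, one arrives at a bound for $\E[|\tilde T_{l_1}|^{2q}]$ whose terms fall into three groups: (a) $\gs_{l_i}^{2q}$ pieces which, after re-expressing $\gs_{l_i}^2$ in terms of $\gs_{l_1}^2$ by repeated use of the same Corollary, contribute only $O(m^{-(q-1)})$ to the Lyapounov ratio and so vanish automatically; (b) $(m_id_n)^{2q}$ error pieces whose smallness in the ratio reduces, after elementary manipulation with $d_n\sim n^\ga$, $k_n\sim n^{\theta\ga}$, $l_i\sim n^{\gb_i}$, to $(2+\theta)\ga+2(\gb_i-\gb_{i+1})+(1-\gb_i)/q<1$---precisely condition~(i); (c) the closing $l_1 l_t^{q-1}$ piece, which yields $\ga<(q-1)(1-\gb_t)/(q\theta)$, precisely the final hypothesis.

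The main obstacle will be the careful bookkeeping of the $d_n$-errors that accumulate each time Corollary~\ref{cor:mvest} is applied to compare variances across levels. These nested errors must all be absorbed into the $(m_id_n)^{2q}$ terms (and hence into condition~(i)) without spawning any extra constraint on $\ga$. The telescoping chain $\gb_1>\gb_2>\cdots>\gb_t$, together with the collection of intermediate conditions~(i), is exactly what is needed to make this absorption work at every level, and to complete the proof of the CLT for the i.i.d.\ sum via Lyapounov.
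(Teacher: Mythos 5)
Your skeleton is the same as the paper's: reduce to the Lyapounov condition, dispose of the first hypothesis by applying Proposition~\ref{prop:mbd} at scale $l$ directly, and for the second hypothesis run the multi-scale recursion that combines Lemma~\ref{lem:ulbd}, Lemma~\ref{lem:iidbd} and Corollary~\ref{cor:mvest} at each level and closes at scale $l_t$ with Proposition~\ref{prop:mbd}. Your groups (a), (b), (c) are exactly the paper's Claims~2, 1 and the final estimate, and the exponent conditions you read off from (b) and (c) are the correct ones; the Case~I computation also matches.

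The gap is in the remainder blocks. At level $i$ you estimate the single leftover summand $X_{m_i+1}\equald T_{r}(G_n)$, $0\le r<l_{i+1}$, by the crude moment bound $\E[|\tilde{T}_{r}(G_n)|^{2q}]\le c\,l_{i+1}^{q}$ and carry an $l_{i+1}^q$ term through the recursion. After unrolling, that term enters the Lyapounov ratio with prefactor $\eps_i=m_1\cdots m_i/s_n^{2q}$, and the only available lower bound $s_n^2\ge cn/k_n$ turns the requirement $\eps_i l_{i+1}^q=o(1)$ into $\theta\ga\le(1-\gb_{i+1})-(1-\gb_i)/q$, which is \emph{not} implied by the stated hypotheses. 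For instance, with $q=2$, $\theta=1$, $t=4$ and the extremal sequence of Lemma~\ref{lem:sys}, the lemma allows $\theta\ga$ up to $175/781$ while your requirement at $i=1$ forces $\theta\ga\le 160/781$; in general it fails whenever $(1/r)^{t-i}>(q-1)/(q-3/2)$ with $r=1-1/(2q)$, i.e., at all sufficiently deep levels, which is precisely the regime needed to approach the optimal exponent. The crude bound $c\,r^{q}$ is exactly the estimate the renormalization is designed to avoid except at the bottom scale, so it cannot be applied to remainders at intermediate scales. The repair is to recurse on the remainders as well: if $r\le l_t$, use $\eps_i\le\eps_t$ together with $\eps_t l_t^q=o(1)$ and stop; otherwise locate $j\ge i$ with $l_{j+1}\le r<l_j$, use $\eps_i\le\eps_j$, and keep splitting $[0,r]\times G_n$ down to scale $l_t$ before invoking Proposition~\ref{prop:mbd}. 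This is how the paper disposes of \eqref{eq:errterms}.
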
  
\begin{proof}
	Since $X_i^{(n)},i=1,2,\ldots,m$ are i.i.d.~with mean $\mu_l(G_n)$  and variance $\gs^2_l(G_n)$ and $\E[\go^p]<\infty$ for some $p>2$, we can use the Lyapounov condition to prove the central limit theorem. We need to show that
\[
		  \frac{m}{s_n^{p}}\E[|\tilde{T}_{l}(G_n)|^p]\to 0 \text{ as } n\to \infty
\]
 where $s_n^2=m\gs_l^2(G_n)$. By the variance lower bound from Proposition~\ref{prop:varbd} we have 
\begin{align} \label{eq:snbd}
	s_n^2\ge c_1\frac{ml}{k_n}\ge c_2 \frac{n}{k_n}
\end{align} for some constants $c_i>0$ where $k_n$ is the number of edges in $G_n$. Using the moment bound from Proposition~\ref{prop:mbd} and lower bound on $s_n^2$ (note that $d_{n}^{2}=o(l)$) we have
 	\begin{align*}
		  \frac{m}{s_n^{p}}\E[|\tilde{T}_{l}(G_n)|^p] \le  \frac{c_p ml^{p/2}}{(n/k_n)^{p/2}} \le \frac{c_p ml^{p/2}k_n^{p/2}}{(ml)^{p/2}}=   \frac{c_p k_n^{p/2}}{m^{(p-2)/2}}.
	\end{align*}                                            
	Thus when $k_n=o(m^{1-2/p})$ or equivalently $\theta\ga\le (1-2/p)(1-\gb)$, we see that the right hand side converges to zero and we have a central limit theorem. This proves the assertion of the theorem when $t=1$.
	
	Let us now look into the bounds more carefully. The random variable $T_{l}(G_n)$ itself behaves like a sum of i.i.d.~random variables each having distribution $T_{l'}(G_n)$ for $l'<l$. We will use this fact to improve the required  growth rate of $k_{n}$. 
	Let $q=p/2$ and  assume that there exist $t\ge 2$ real numbers $\gb_i, i=1,2\ldots,t$ such that  $2\ga<\gb_t<\gb_{t-1}<\cdots<\gb_1=\gb$ and we have
	\begin{align}
	\begin{split}
		\ga &\le \frac{1 - 2(\gb_i-\gb_{i+1}) - (1-\gb_i)/q}{2+\theta} \text{ for all } i=1,2,\ldots,t-1\\
		\text{ and } \ga &\le \frac{q-1}{q}\cdot \frac{1 -\gb_t}{\theta} .
		\end{split}\label{eq:assump}
	\end{align}
	From now on we will write $l_1,m_1$ and $\gb_1$ instead of $l,m$ and $\gb$ respectively. Recall that we have $l_1=\max\{\lfloor n^{\gb_1}\rfloor,1\}$ and $d_n=o(n^\ga)$. 
	We will take 
	\[
	l_i=\max\{\lfloor n^{\gb_i}\rfloor,1\},  m_i=\lfloor l_{i-1}/l_i\rfloor \text{  for } i=2,\ldots,t.
	\]
The idea is as follows. First we will break the cylinder graph $[0,l_1]\times G_n$ into $m_2$ many equal sized graphs each of which looks like $[0,l_2]\times G_n$. Then we will break each of the new graphs again into $m_3$ many equal sized graphs each of which looks like $[0,l_3]\times G_n$ and so on. We will stop after $t$ steps. Our goal is to break the error term into smaller and smaller quantities and show that the original quantity is ``small'' when each of the final quantities are ``small''. Throughout the proof $q,t,\theta,\ga,\gb_i,i=1,2,\ldots,t$ are fixed. 

	For simplicity, first we will assume that 
	\[
	l_1=m_2m_3\cdots m_t l_{t}.
	\] 
	Under this assumption we have $m_il_i= l_{i-1}$ for all $i=2\ldots,t$.
Otherwise one has to look at the error terms which can be easily bounded using essentially the same idea and are considered in \eqref{eq:errterms}.\\

\noindent{\bf First Step.}	Let us start with the first splitting. We break the rectangular graph $[0,l_1]\times G_n$ into $m_2$ many equal sized graphs $[(i-1)l_2,i l_2]\times G_n$ for $i=1,2,\ldots,m_2$. Recall that we have $l_1=m_2 l_2$.  
	
	 Let $S_{m_2}=\sum_{i=1}^{m_2}X_i$ where $X_i=T_{(i-1)l_2,il_2}(G_n)-\mu_{l_2}(G_n)$. Recall that $X_i$'s are i.i.d.~having the same distribution as $\tilde{T}_{l_{2}}(G_n)$ where $\tilde{T}_{l}(G_n)={T}_{l}(G_n)-\mu_{l}(G_n)$. Let 
	$
	\eps_1=\eps_1(n) := {m_1}/{s_n^{2q}}.
	$
   We need to show the Lyapounov condition: 
\begin{align}
	\eps_1 \E[\tilde{T}_{l_1}(G_n)^{2q}]=o(1).  \label{eq:toshow1} 
\end{align}
From Lemma~\ref{lem:ulbd} we have 
	\[
		\E[|\tilde{T}_{l_1}(G_n) - S_{m_2}|^{2q}]\le c(m_2d_n)^{2q}\E[\go^{2q}]
	\]                                                                  
	for some constant $c>0$. Moreover,  Lemma~\ref{lem:iidbd} implies that
	\[
	 \E[S_{m_2}^{2q}]\le A_{q}(m_2^q\E[\tilde{T}_{l_2}(G_n)^{2}]^q+m_2\E[\tilde{T}_{l_2}(G_n)^{2q}]). 
	\]
	Thus we have
	\begin{align*}
		\eps_1&\E[\tilde{T}_{l_1}(G_n)^{2q}]\\
		&\quad\le c(\eps_1 (m_2d_n)^{2q} + \eps_1 m_2^q\E[\tilde{T}_{l_2}(G_n)^{2}]^q+\eps_1m_2\E[\tilde{T}_{l_2}(G_n)^{2q}]).
	\end{align*} 
Hence we need to show that
	\begin{align}
	   \eps_1 (m_2d_n)^{2q}&=o(1), \label{eq:ibd11}\\ 
	\eps_1 m_2^q \gs_{l_2}^{2q}(G_n) &=o(1) \label{eq:ibd12}\\ 
	\text{ and } \eps_1 m_2\E[\tilde{T}_{l_2}(G_n)^{2q}]&=o(1) \label{eq:ibd13} 
	\end{align}                                 
	
Using the variance lower bound~\eqref{eq:snbd} we have
	\begin{align*}
		\eps_1(m_2d_n)^{2q} &\le c\frac{m_1(m_2)^{2q} (d_n^2k_n)^q }{n^q}
		\le c\left(\frac{d_n^2k_n }{n^{1-2(\gb_1-\gb_2)-(1-\gb_1)/q}}\right)^q.
	\end{align*}                                                
Now \eqref{eq:ibd11} follows as $d_n^2k_n=o(n^{(2+\theta)\ga})$ and 
$ (2+\theta) \ga \le 1-2(\gb_1-\gb_2)-(1-\gb_1)/q$.
Moreover, Corollary~\ref{cor:mvest} with $l_1=m_2l_2$ implies that
\begin{align*}
	 (m_2 \gs_{l_2}^2(G_n))^{1/2} \le \gs_{l_1}(G_n) + c m_2 d_n. 
\end{align*}
Thus using the definition of $\eps_1=\eps_1(n)$ and the fact that $s_n^2=m_1 \gs_{l_1}^{2}(G_n)$ we have
\begin{align*}
	  \eps_1m_2^q \gs_{l_2}^{2q}(G_n) \le c( \eps_1 \gs_{l_1}^{2q}(G_n) + \eps_1(m_2 d_n)^{2q} )\le c\left( m_1^{1-q} +   \eps_1  (m_2d_n)^{2q} \right)
\end{align*}                    
and the right hand side is $o(1)$ as $q>1$ and by \eqref{eq:ibd11}. 
So the only thing that remains to be proved is that 
\[
	 \eps_1 m_2 \E[\tilde{T}_{l_2}(G_n)^{2q}]=o(1).
\]    

\noindent{\bf Induction step.}
From the above calculations in step $1$ the induction step is clear. Define
\[
	\eps_i=\eps_i(n)= \frac{m_1m_2\cdots m_i}{s_n^{2q}} \text{ for } i\ge 1.
\] 

\noindent{\bf Claim $1$.} We have
$
		\eps_i (m_{i+1}d_n)^{2q}=o(1)
$                          
for all $i< t$.\\
\noindent{\bf Proof of Claim $1$.}  Fix any $i$. Using definition of $\eps_i$ and the variance lower bound from \eqref{eq:snbd} we have 
\begin{align*}
	 \eps_i (m_{i+1}d_n)^{2q} = \frac{m_1\cdots m_i(m_{i+1}d_n)^{2q}}{s_n^{2q}}
	&\le c \frac{n^{1-\gb_i}m_{i+1}^{2q} (d_n^2k_n)^q}{n^q}     \\
	&= o\left( \left[\frac{n^{(2+\theta)\ga}}{n^{ 1-2(\gb_i-\gb_{i+1})-(1-\gb_i)/q }} \right]^q\right).
\end{align*} 
Now the claim follows by our assumption \eqref{eq:assump} that
$
   (2+\theta)\ga\le 1-2(\gb_i-\gb_{i+1})-(1-\gb_i)/q \text{ for all }i<t.
$

Our next claim is  the following.\\

\noindent{\bf Claim $2$.} We have 
$\eps_{i} m_{i+1}^q \gs_{l_{i+1}}^{2q}(G_n) =o(1)$                                                                
for all $i\ge 1$.\\
\noindent{\bf Proof of Claim $2$.}  
We will prove the claim by induction on $i$. We have already proved the claim for $i=1$ in \eqref{eq:ibd12}. Now suppose that the claim is true for some $i\ge 1$. Using Corollary~\ref{cor:mvest} for $l_{i+1}=l_{i+2}m_{i+2}$ we see that
\begin{align*}
   \eps_{i+1} (m_{i+2} \gs_{l_{i+2}}^2(G_n))^q
&\le c( \eps_{i+1}  \gs_{l_{i+1}}^{2q}(G_n) + \eps_{i+1}(m_{i+2} d_n)^{2q} )\\
&=c( \eps_{i} m_{i+1}  \gs_{l_{i+1}}^{2q}(G_n) + \eps_{i+1}(m_{i+2} d_n)^{2q} ).
\end{align*}
Hence we have $\eps_{i+1} (m_{i+2} \gs_{l_{i+2}}^2(G_n))^q=o(1)$ by Claim $1$ and the induction hypothesis as $q>1$. This completes the  proof.\\

\noindent{\bf Claim $3$.} For any $i\ge 1$,
$\eps_i\E[\tilde{T}_{l_i}(G_n)^{2q}]=o(1) $  if $\eps_{i+1}\E[\tilde{T}_{l_{i+1}}(G_n)^{2q}]=o(1).$\\
\noindent{\bf Proof of Claim $3$.} 
Assume that 
$
	   \eps_{i+1} \E[\tilde{T}_{l_{i+1}}(G_n)^{2q}]=o(1).
$
We write $\tilde{T}_{l_i}(G_n)$ as a sum of $S_{m_{i+1}}$  and an error term of order $m_{i+1}d_n$ where $S_{m_{i+1}}$ is sum of $m_{i+1}$ many i.i.d.~random variables each having distribution $\tilde{T}_{l_{i+1}}(G_n)$.  Using Lemma~\ref{lem:iidbd}, as was done in the first step, one can easily see that $\eps_i\E[\tilde{T}_{l_i}(G_n)^{2q}]=o(1) $ when 
\begin{align}
	\eps_i (m_{i+1}d_n)^{2q}&=o(1), \label{eq:ibdj1} \\
	\eps_{i} m_{i+1}^q\gs_{l_{i+1}}^{2q}(G_n) &=o(1)  \label{eq:ibdj2}  \\
	\text{ and } 
	\eps_{i} m_{i+1} \E[\tilde{T}_{l_{i+1}}(G_n)^{2q}]&=o(1).\label{eq:ibdj3}
\end{align}
Now Condition \eqref{eq:ibdj1} holds by Claim $1$, 
Condition \eqref{eq:ibdj2} holds by Claim $2$ and Condition \eqref{eq:ibdj3} holds by the hypothesis  as $\eps_{i+1}=\eps_i m_{i+1}$. 

Hence if we stop at step $t$, we see that the central limit theorem holds when
$
	   \eps_t \E[\tilde{T}_{l_t}(G_n)^{2q}]=o(1).
$
By the upper bound for the $2q$-th moment from Proposition~\ref{prop:mbd} (as $d_n^2=o(l_t)$)  we see that 
$
	 \eps_t \E[\tilde{T}_{l_t}(G_n)^{2q}] \le \eps_t l_t^q
$
and by the lower bound for the variance from \eqref{eq:snbd} we have
\begin{align*}
	 \eps_t l_t^q
	 &\le \frac{cm_1m_2\cdots m_t l_t^q k_n^q}{n^q} 
	 =\frac{ck_n^q}{(m_1m_2\cdots m_t)^{q-1}}= o\left(  \frac{n^{q\theta \ga}}{n^{(q-1)(1-\gb_t)}}\right).
\end{align*} 
The last condition also holds by our assumption \eqref{eq:assump} that $q\theta\ga\le (q-1)(1-\gb_t)$. Thus we are done when $l_1=m_2m_3\cdots m_t l_{t}$. 

Now, in general we have $l_{i-1}=m_il_i+r_i$ for $i=2,\ldots,t$ where $0\le r_i<l_i$ for all $i$.  Using the same proof used in the case when all $r_{i}=0$, one can easily see from Claim $3$, that we need to prove the extra conditions that
\begin{align}\label{eq:errterms}
	\eps_{i} \E[\tilde{T}_{r_i}(G_n)^{2q}]=o(1) \text{ for all } i=2,3,\ldots,t.
\end{align}
Fix $i\in\{2,3,\ldots,t\}$. If $r_i\le l_t$ then we are done since $\eps_i\le \eps_t$ and by Proposition~\ref{prop:mbd} we have $\E[\tilde{T}_{r_i}(G_n)^{2q}]\le c(d_n^{2q} + l_t^q)\le c_1l_t^q$. The last inequality follows since $2\ga< \gb_t$. Now suppose that $l_{j+1}\le r_i<l_j$ for some $j\ge i$.  Since we have $\eps_i\le \eps_j$ for $j\ge i$ working with $r_i$ instead of $l_j$ and using the same inductive analysis used before we have the required result \eqref{eq:errterms}.
\end{proof}

\subsection{Choosing the sequence} 
\label{sub:choosing_the_sequence}

To complete the proof of Theorem~\ref{thm:maingr} we need to choose an appropriate sequence $(\gb_1,\ldots,\gb_t)$ in \eqref{eq:assump} which will be provided by Lemma~\ref{lem:sys}. Note that 
\[
	\frac{1 - 2(\gb_0-\gb_{1}) - (1-\gb_0)/q}{2+\theta} = \frac{2\gb_{1} - 1}{2+\theta}
\]   
for $\gb_0=1$ and we have noted earlier in \eqref{eq:kndn} that 
\[
\frac{a_n(G_n)- \E[a_n(G_n)]}{\var(a_n(G_n))^{1/2}}
\text{ has the same asymptotic limit as }
\frac{\sum_{i=1}^m X_i^{(n)} - m\mu_l(G_n)}{\sqrt{m}\gs_l(G_n)}
\]
when $d_n=o(n^\ga)$ and 
$
\ga\le (2\gb_1 - 1)/({2+\theta}).
$

\begin{lemma}\label{lem:sys}
	Let $\gb_1,\gb_2,\ldots,\gb_t$ be $t$ real numbers satisfying the system of linear equations
	\begin{align}
		\frac{1 - 2(\gb_i-\gb_{i+1}) - (1-\gb_i)/q}{2+\theta}  =  \frac{q-1}{q}\cdot \frac{1 -\gb_t}{\theta} \label{eq:syseq}
	\end{align}
	for all $i=0,1,2,\ldots,t-1$ where $\gb_0=1$. Then we have                      
	\begin{align}
		\gb_i:= 1 - \frac{q\theta (1-r^i)}{\theta + (q-1)(2+\theta)(1-r^t)}
	\end{align}                                                            
	for all $i=1,2,\ldots,t$ where $r=1-1/(2q)$.
\end{lemma}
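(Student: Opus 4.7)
The plan is to observe that the right-hand side of \eqref{eq:syseq} is the same constant for every $i$, solve the resulting first-order linear recurrence in $i$, and then pin down that constant using the boundary condition that the expression evaluated at $i=t-1$ must agree with $\gb_t$.

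More concretely, denote the common right-hand side by
\[
C := \frac{q-1}{q}\cdot\frac{1-\gb_t}{\theta},
\]
and substitute $\gamma_i := 1-\gb_i$, so $\gamma_0 = 0$. Since $\gb_i-\gb_{i+1} = \gamma_{i+1}-\gamma_i$, equation \eqref{eq:syseq} rearranges to
\[
\gamma_{i+1} = \frac{1-C(2+\theta)}{2} + r\,\gamma_i,\qquad r = 1-\frac{1}{2q}.
\]
This is a standard linear recurrence; with $\gamma_0=0$, iterating gives the geometric-sum closed form
\[
\gamma_i = \frac{1-C(2+\theta)}{2}\cdot\frac{1-r^i}{1-r} = q\bigl(1-C(2+\theta)\bigr)(1-r^i),
\]
using $1-r = 1/(2q)$.

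It remains to solve for $C$ consistently. Applying the definition of $C$ and the formula for $\gamma_t = 1-\gb_t$, I would write
\[
C = \frac{q-1}{q}\cdot\frac{\gamma_t}{\theta} = \frac{(q-1)(1-r^t)\bigl(1-C(2+\theta)\bigr)}{\theta},
\]
which is a single linear equation in $C$. Solving it yields
\[
1-C(2+\theta) = \frac{\theta}{\theta+(q-1)(2+\theta)(1-r^t)},
\]
and plugging this back into the closed form for $\gamma_i$ gives
\[
\gamma_i = \frac{q\theta(1-r^i)}{\theta+(q-1)(2+\theta)(1-r^t)}.
\]
The stated formula for $\gb_i = 1-\gamma_i$ follows immediately. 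There is no real obstacle here; the only things to double-check are the algebraic identity $1-r = 1/(2q)$ and the bookkeeping when solving the single linear equation for $C$.
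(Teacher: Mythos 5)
Your proposal is correct and follows essentially the same route as the paper: the substitution $x_i=1-\gb_i$, reduction to the first-order linear recurrence $x_{i+1}-rx_i=(1-C(2+\theta))/2$ with $x_0=0$, the geometric-sum closed form using $1-r=1/(2q)$, and the self-consistency condition at $i=t$ to determine the constant. All the algebra checks out, so nothing further is needed.
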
 
\begin{proof}
Define $x_i=1-\gb_i$ for $i=0,1,\ldots,t$. Clearly $x_0=0$. Also define the constants
\[
	c=\frac{q-1}{q}\cdot \frac{2+\theta}{\theta} \text{ and } r=1-\frac{1}{2q}.
\]
Then the system of equations \eqref{eq:syseq} can be written in terms of $x_i$'s as
\begin{align} 
	1 - 2x_{i+1} + 2r x_i &=  c x_t \text{ for all } i=0,1,\ldots,t-1\notag\\
	\text{or } x_{i+1} - rx_{i} &= (1-c x_t)/2 \text{ for all } i=0,1,\ldots,t-1.   \label{eq:rec}
\end{align}
Multiplying the $i$-th equation by $r^{-i-1}$ and summing over $i=0,1,\ldots,t-1$ we have
\begin{align*}
	r^{-t}x_t =	qr^{-t}(cx_t-1)(r^t-1)
	\text{ or } x_t &=  \frac{q(1-r^t)}{1+qc(1-r^t)}.
\end{align*}
Now solving \eqref{eq:rec} recursively starting from $i=t-1,t-2,\ldots,0$ we have
\[
	x_i = \frac{q(1-r^i)}{1+qc(1-r^t)} \text{ for all }i=1,2,\ldots,t.
\]                                                                    
Simplifying and reverting back to $\gb_i$ we finally get 
\[
	x_i = 1- \frac{q\theta(1-r^i)}{\theta+(q-1)(2+\theta)(1-r^t)}
\]
for all $i=1,2,\ldots,t$.
\end{proof}


\subsection{Completing the proof} 
\label{sub:completing_the_proof}
                                                  
Now we connect all the loose ends to complete the proof of Theorem~\ref{thm:maingr}.
	
	Recall that the number of edges satisfies $k_n=O(d_n^\theta)$ and moreover we have $d_n=o(n^\ga)$ for some $\ga<1$. We also have $l\sim n^{\gb_1},m\sim n^{1-\gb_1}$ for some $\gb_1\in (\ga,1)$. We have proved in \eqref{eq:kndn} that the CLT will follow if we can find some $\gb_1\in (\ga,1)$ such that  
$\ga \le  ({2\gb_1 -1})/({2+\theta})$ and  
\begin{align}
	\frac{\sum_{i=1}^m X_i - m\mu_l(G_n)}{\sqrt{m}\gs_l(G_n)} \weakc N(0,1) \label{eq:xclt}
\end{align}                                                               
as $n\to\infty$ where $X_i$'s are i.i.d.~having distribution $T_l(G_n)$. Note that $(2\gb -1)/(2+\theta)<\gb/2$ for all $\gb>0$. 

To prove \eqref{eq:xclt} we will use the condition in Lemma~\ref{lem:tclt}. Assume that $\E[\go^{p}]<\infty$ for some real number $p>2$. Let $q=p/2$. From Lemma~\ref{lem:tclt} we see that CLT will hold in \eqref{eq:xclt} if  
there exist $t\ge 1$ real numbers $\gb_i, i=1,2\ldots,t$ such that  $2\ga<\gb_t<\gb_{t-1}<\cdots<\gb_1<\gb_0=1$ and 
\begin{align} 
\ga &\le \frac{q-1}{q}\cdot \frac{1 -\gb_t}{\theta} \text{ and }
	\ga \le \frac{1 - 2(\gb_i-\gb_{i+1}) - (1-\gb_i)/q}{2+\theta} \label{eq:sys}
\end{align} 
 for all $ i=0,1,\ldots,t-1$.  For $i=0$ the equation reduces to $\ga\le (2\gb_1-1)/(2+\theta)$.                                                 

Now fix any integer $t\ge 1$. Define 
$
r=1-1/2q.
$ 
For $i=1,\ldots,t$, define
\begin{align}
	\gb_i:= 1 - \frac{q\theta (1-r^i)}{\theta + (q-1)(2+\theta)(1-r^t)}.
\end{align}                                                             
As usual we will assume that $\gb_0=1$. Clearly $\gb_t<\gb_{t-1}<\cdots<\gb_1<\gb_0$. The sequence $(\gb_{1},\ldots,\gb_{t})$ is the unique solution to the system of equations given by equality in the right hand side of \eqref{eq:sys} (see Lemma~\ref{lem:sys}). In fact we have
\[
	\frac{q-1}{q}\cdot \frac{1 -\gb_t}{\theta}=    \frac{(q-1) (1-r^t)}{\theta + (q-1)(2+\theta)(1-r^t)}
\]  
and
\begin{align*}
	&\frac{1 - 2(\gb_i-\gb_{i+1}) - (1-\gb_i)/q}{2+\theta} =\frac{ (q-1)(1-r^t)}{\theta + (q-1)(2+\theta)(1-r^t)}  
\end{align*}     
for any $i=0,1,\ldots,t-1$. Now note that
\begin{align*}
	\frac{ 2(q-1)(1-r^t)}{\theta + (q-1)(2+\theta)(1-r^t)}  < 1 - \frac{q\theta (1-r^t)}{\theta + (q-1)(2+\theta)(1-r^t)}=\gb_t
\end{align*}
as $\theta+(q-1)(2+\theta)(1-r^t)- (2(q-1) + q\theta)(1-r^t) =\theta r^t>0$. 
Thus combining all the previous results we have 
\begin{align*}
	\frac{a_n(G_n)- \E[a_n(G_n)]}{\sqrt{m}\gs_l(G_n)}\weakc N(0,1) \text{ as }n\to\infty
\end{align*}
when
\[
	\ga \le \frac{ (q-1)(1-r^t)}{\theta + (q-1)(2+\theta)(1-r^t)}
\] 
for some integer $t\ge 1$. Since $r=1-1/(2q)<1$, letting $t\to\infty$ we get the  CLT  when
\[
	\ga < \frac{q-1}{\theta + (q-1)(2+\theta)}= \frac{1}{2+\theta + 2\theta/(p-2)}.
\]       
Thus we are done. \hfill\qed


\section{The case of fixed graph $G$} 
\label{sec:fixedg}   

By the arguments given in Section~\ref{sec:main}, we have a Gaussian central limit theorem for $a_n(G)$ and $T_{n}(G)$ as $n\to\infty$ after proper scaling when $G$ is a fixed graph. 
Proposition~\ref{prop:meanbd} says that 
\[
\nu(G):=\lim_{n\to\infty} \frac{\E[T_{n}(G)]}{n}
\]
exists and is positive.
Moreover, Proposition~\ref{prop:varbd} gives that 
\[
	0<c_1\le \frac{\var(T_{n}(G))}{n}\le c_2
\]         
for all $n$ for some constants $c_1,c_2>0$ depending on $G$. The next lemma says that in fact we can say more. Assume that $v(G)$ is the number of vertices in $G$, $k(G)$ is the number of edges in $G$ and $D=D(G)$ is the diameter of $G$.
	
\begin{lemma}\label{lem:fixedG}
	      Let $G$ be a finite connected graph. Then we have
	\[
		|\E[T_{n}(G)] - n\nu(G)|\le \mu D \text{ for all } n 
	\]
	and the limit
	\[
		\gs^2(G):=\lim_{n\to\infty} \frac{\gs_n^2(G)}{n}
	\]                              
exists and is positive.
\end{lemma}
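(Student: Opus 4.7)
The plan is to derive both assertions from Corollary~\ref{cor:mvest} applied with $m=1$, decomposing $[0,N_1+N_2]\times G$ into $[0,N_1]\times G$ and $[N_1,N_1+N_2]\times G$ (after relabelling to enforce $l\ge r$, with the degenerate case $N_1=N_2$ handled by taking $m=2$, $r=0$).

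For the mean bound, Lemma~\ref{lem:ulbd} writes $T_{N_1+N_2}(G)=X_1+X_2+Y$ with independent $X_1\equald T_{N_1}(G)$, $X_2\equald T_{N_2}(G)$, and $0\le Y\le S_D$, whence
\[
\mu_{N_1}(G)+\mu_{N_2}(G)\;\le\;\mu_{N_1+N_2}(G)\;\le\;\mu_{N_1}(G)+\mu_{N_2}(G)+D\mu.
\]
The left inequality is superadditivity, so Fekete's lemma gives $\mu_n(G)/n\to\sup_n\mu_n(G)/n=\nu(G)$, whence $\mu_n(G)\le n\nu(G)$. The right inequality shows that $b_n:=\mu_n(G)+D\mu$ is subadditive; applying Fekete again yields $b_n/n\to\inf_n b_n/n$, and since this limit must coincide with $\nu(G)$ we obtain $b_n/n\ge\nu(G)$, i.e.\ $\mu_n(G)\ge n\nu(G)-D\mu$. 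Combining gives $|\mu_n(G)-n\nu(G)|\le D\mu$.

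For the variance, the second half of Corollary~\ref{cor:mvest} (with $m=1$) gives
\[
\bigl|\gs_{N_1+N_2}(G)-\bigl(\gs_{N_1}^2(G)+\gs_{N_2}^2(G)\bigr)^{1/2}\bigr|\le D(\mu^2+\gs^2)^{1/2}.
\]
Set $a_n:=\gs_n^2(G)$. Squaring via $|U^2-V^2|=|U-V|(U+V)$ and invoking the \emph{a priori} bound $a_n\le c_2 n$ from Proposition~\ref{prop:varbd}, I obtain a near-additivity relation
\[
|a_{N_1+N_2}-a_{N_1}-a_{N_2}|\le C_0\sqrt{N_1+N_2}
\]
for some constant $C_0=C_0(G,F)$; in particular $a_{n+m}\le a_n+a_m+C_0\sqrt{n+m}$. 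This is precisely the hypothesis of Hammersley's generalization of Fekete's lemma: for nonnegative $a_n$ satisfying $a_{n+m}\le a_n+a_m+g(n+m)$ with $\sum_n g(n)/n^2<\infty$, the limit $\lim_n a_n/n$ exists, and the condition is satisfied by $g(x)=C_0\sqrt{x}$. Positivity of $\gs^2(G):=\lim a_n/n$ then follows immediately from the matching lower bound $a_n/n\ge c_1/k(G)>0$ in Proposition~\ref{prop:varbd}.

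The main obstacle is the near-subadditive step: the defect of size $\sqrt{n+m}$ is too large to make any simple shift $a_n+\psi(n)$ genuinely subadditive, so the invocation of Hammersley's lemma is essential rather than cosmetic. If a direct reference is unavailable, I would include a short self-contained proof, first establishing that $\{a_{2^k}/2^k\}$ is Cauchy (from the recursion $a_{2^{k+1}}/2^{k+1}=a_{2^k}/2^k+O(2^{-k/2})$, whose errors form a summable geometric series), and then extending to general $n$ via the binary expansion $n=\sum_i 2^{k_i}$, verifying that the accumulated error in iteratively splitting off the largest remaining power of $2$ is bounded by $C_0\sqrt{n}\sum_i 2^{-(i-1)/2}=O(\sqrt{n})$, so that $c_n:=a_n/n$ converges to the same limit as $c_{2^k}$ after weighting by $p_i=2^{k_i}/n$.
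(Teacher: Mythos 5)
Your proof is correct, and it rests on the same decomposition (Lemma~\ref{lem:ulbd}/Corollary~\ref{cor:mvest} with a two-block split), but it packages both halves differently from the paper. For the mean, the paper derives the rate directly from the Cauchy-type estimate $|\tilde{\mu}_{mk}-\tilde{\mu}_k|\le \mu D/k$ and lets $k\to\infty$, whereas you apply Fekete twice, once to the superadditive sequence $\mu_n(G)$ and once to the subadditive shift $\mu_n(G)+D\mu$; both give the same constant, though note that if you literally quote Corollary~\ref{cor:mvest} with $m=2$, $r=0$ in the case $N_1=N_2$ you only get a defect $2D\mu$ there, so to keep $\mu_n(G)+D\mu$ subadditive you should observe (as in the proof of Lemma~\ref{lem:ulbd}) that any split into \emph{two} blocks costs a single connecting path of length at most $D$. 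For the variance, the paper works at the level of standard deviations: it shows $\{\tilde{\gs}_{2^k}\}$ is Cauchy and then interpolates to general $l$ by choosing $2^k\asymp l^{3/2}$. You instead square the near-additivity of standard deviations, using the a priori bound $\gs_n^2(G)\le c_2 n$ from Propositions~\ref{prop:varbd}--\ref{prop:mbd}, to obtain $|\gs_{n+m}^2(G)-\gs_n^2(G)-\gs_m^2(G)|\le C_0\sqrt{n+m}$, and then invoke the de Bruijn--Erd\H{o}s/Hammersley generalization of Fekete's lemma (the defect $g(x)=C_0\sqrt{x}$ satisfies $\sum_n g(n)/n^2<\infty$). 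This is a legitimate and arguably cleaner route, at the price of importing an external lemma the paper avoids; your self-contained fallback (dyadic Cauchy plus binary-expansion interpolation with geometrically decaying errors) is essentially the paper's argument in different clothing. Positivity from the lower bound in Proposition~\ref{prop:varbd} is handled identically in both.
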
 
\begin{proof}
	Let $\tilde{\mu}_n=\mu_n/n$ and $\tilde{\gs}^2_n=\gs_n^2/n$. Using the proof given in corollary \ref{cor:mvest} we have
	\begin{align}
		|n\tilde{\mu}_n - (ml\tilde{\mu}_l+r\tilde{\mu}_r)|\le m\mu D \text{ and }
		\abs{(n \tilde{\gs}^2_n)^{1/2}- (ml\tilde{\gs}^2_l+r\tilde{\gs}_r^2)^{1/2} }\le mb D\label{eq:vbd}
	\end{align}         
	for all $n=ml+r$ with $0\le r<l$ where $b=(\mu^2+\gs^2)^{1/2}$. 
	Thus for any $m,k$ we have
	$
		|\tilde{\mu}_{mk} - \tilde{\mu}_m|\le {\mu D}/{m}.
	$
	Reversing the roles of $m$ and $k$, and combining, we see that for any $m,k$, we have
	\[
	|\tilde{\mu}_{m} - \tilde{\mu}_k|\le {\mu D}/{k} + {\mu D}/{m}.
	\]
	Taking limits as $k\to\infty$ we have, for any $m$,
	\[
	|\tilde{\mu}_m - \lim_{n\to \infty} \tilde{\mu}_n|\le {\mu D}/{m}.
	\]

	For the variance, we take $n=2l$ in equation~\eqref{eq:vbd} to have 
	\[
		\abs{\tilde{\gs}_{2l}- \tilde{\gs}_l }\le bD(2/l)^{1/2}.
	\]                                                                   
	Hence, it follows that $\tilde{\gs}_{2^k}$ is Cauchy and $\lim_{k\to\infty}\tilde{\gs}_{2^k}$ exists. 
	
	Now take any $l\ge 1$. There exists a unique positive integer $k=k(l)$ such that $2l^{3/2}\le 2^k < 4l^{3/2}$ ($k(l)=1 + \lceil \log_2 l^{3/2}\rceil$).  Suppose $2^k=ml+r$ where $0\le r<l$. Clearly $\sqrt{l}\le m\le 4\sqrt{l}$. Now from \eqref{eq:vbd} we have, 
	\begin{align*}
		\abs{(2^k\tilde{\gs}_{2^k}^2)^{1/2} - (ml\tilde{\gs}_l^2 + r\tilde{\gs}_r^2)^{1/2}} \le mbD.
	\end{align*}  
	Dividing by $2^{k/2}$ on both sides, we get
	\[
	\abs{\tilde{\gs}_{2^k} - \biggl(\tilde{\gs}_l^2 + \frac{r(\tilde{\gs}_r^2-\tilde{\gs}_l^2)}{ml+r}\biggr)^{1/2}} \le \frac{mbD}{\sqrt{ml+r}}\le  2bD l^{-1/4}.
	\]
	Note that $k,m,r$ are functions of $l$ in the above expression. Among these, $m(l)\ge l^{1/2}$ and $r(l) < l$. Taking $l \to \infty$, and using the fact that the sequence $\{\tilde{\gs}_n^2\}_{n\ge 1}$ is uniformly bounded (see Proposition~\ref{prop:mbd}), we get that   $\lim_{m\to \infty} \tilde{\gs}_m$ exists and equals $\lim_{k\to \infty} \tilde{\gs}_{2^k}$. Positivity of the limit follows from the variance lower bound given in Proposition~\ref{prop:varbd}.
\end{proof}

Note that, if we consider the point-to-point cylinder first-passage time $t_n(G)$ in $[0,n]\times G$, the same results given in Lemma~\ref{lem:fixedG} hold for $\E[t_n(G)]$ and $\var(t_n(G))$. 

Now we consider the process $X(m)$ where $X(m)=t_m(G)-m\nu(G)$ for $m\in \{0,1,\ldots\}$ and $X_n(t)=X_m + (t-m)(X_{m+1} - X_m )$ for $t\in (m,m+1)$. Note that when $G$ is the trivial graph consisting of a single vertex, $X(n)$ corresponds to random walk with linear interpolation and by Donsker's theorem $\{(n\gs^2)^{-1/2}X(nt)\}_{t\ge 0}$ converges to Brownian motion. The next lemma says that for general $G$ we also have the same behavior. We assume that  $\E[\go^p]<\infty$ for some $p>2$ where $\go\sim F$.

\begin{lemma}\label{lem:bm}
	The scaled process $\{ (n\gs^2(G))^{-1/2} X(n t) \}_{ t\ge 0}$ converges in distribution to standard Brownian motion as $n\to\infty$. 
\end{lemma}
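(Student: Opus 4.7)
The plan is to prove the two standard ingredients for convergence in $C[0,T]$ for every $T>0$: convergence of finite-dimensional distributions, and tightness. First I would reduce from the point-to-point process $t_n$ to the side-to-side process $T_n$ using Lemma~\ref{lem:p2s}: pointwise, $0 \le t_m(G) - T_m(G) \le S_{2D}$ where $S_{2D}$ is a sum of at most $2D$ i.i.d.\ edge weights, so its $L^p$ norm (for $p>2$) is uniformly bounded in $m$. Writing $\tilde X(m) := T_m(G) - m\nu(G)$ and extending by linear interpolation, this makes the scaled difference $(n\gs^2(G))^{-1/2}(X(nt) - \tilde X(nt))$ of order $n^{-1/2}$ in $L^p$ uniformly on compact time intervals, and hence negligible in the weak limit. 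So it suffices to prove Brownian convergence for the scaled version of $\tilde X$.

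For finite-dimensional convergence, I would fix $0 = s_0 < s_1 < \cdots < s_k$ and set $m_i = \lfloor n s_i \rfloor$. Applying Lemma~\ref{lem:ulbd} iteratively to the cylinder $[0, m_k] \times G$, split at the points $m_1, \ldots, m_{k-1}$, gives $T_{m_k}(G) = \sum_{i=1}^k T_{m_{i-1}, m_i}(G) + Y_n$ where $Y_n \ge 0$ is stochastically dominated by a sum of $(k-1)D$ i.i.d.\ edge weights and hence is $O(1)$ in $L^p$. The $k$ side-to-side summands are mutually independent, with $T_{m_{i-1}, m_i}(G) \equald T_{m_i - m_{i-1}}(G)$. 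By Theorem~\ref{thm:maingr}(A), each summand, after centering by its mean and normalizing by $\sqrt{(m_i - m_{i-1})\gs^2(G)}$, converges to a standard normal; Lemma~\ref{lem:fixedG} lets me replace $\E[T_{m_i - m_{i-1}}(G)]$ by $(m_i - m_{i-1})\nu(G)$ and $\var(T_{m_i - m_{i-1}}(G))$ by $(m_i - m_{i-1})\gs^2(G)$ with errors that are negligible after dividing by $\sqrt{n}$. Independence then yields joint convergence of the normalized increments of $\tilde X$ to independent Gaussians with variances $(s_i - s_{i-1})$, which are exactly the Brownian increments.

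For tightness in $C[0,T]$, I would use the Kolmogorov--Chentsov moment criterion: it suffices to exhibit $p>2$ and $C<\infty$ with $\E[|B_n(t) - B_n(s)|^p] \le C(t-s)^{p/2}$ uniformly in $n$, where $B_n(t) := (n\gs^2(G))^{-1/2} X(nt)$. When $n(t-s) \ge 1$, the two-block version of the decomposition above, combined with the moment bound $\E[|T_M(G) - \E T_M(G)|^p] \le C M^{p/2}$ from Proposition~\ref{prop:mbd} and the mean estimate in Lemma~\ref{lem:fixedG}, yields $\E[|\tilde X(nt) - \tilde X(ns)|^p] \le C(n(t-s))^{p/2} + C$, which scales to at most $C(t-s)^{p/2}$ in this regime. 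When $n(t-s) < 1$, both $s$ and $t$ lie in a single linear interpolation segment $[k/n, (k+1)/n]$, and the pointwise bound $|t_{k+1}(G) - t_k(G)| \le \go_e + S_D$ (obtained by extending a minimum path for either problem to a candidate for the other via the single edge $e = ((k,o),(k+1,o))$ or via a length-$D$ detour inside the hyperplane at $x_1=k$) gives $\E|X(k+1) - X(k)|^p \le C$. Linear interpolation then yields $\E|B_n(t) - B_n(s)|^p \le (n(t-s))^p \cdot C n^{-p/2} \le C(t-s)^{p/2}$ in this regime as well.

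The main obstacle, beyond packaging the above into a clean argument, is ensuring that all the pointwise error terms (the defect $Y$ in Lemma~\ref{lem:ulbd}, the gap $t_m - T_m$, and the unit-step fluctuations $|t_{k+1} - t_k|$) are simultaneously controlled in $L^p$ uniformly in $n$ and in the number of subdivisions. All three have the form of a bounded-size sum of i.i.d.\ edge weights, so the assumption $\E[\go^p]<\infty$ with $p>2$ suffices. Combining the finite-dimensional convergence with tightness yields convergence of $B_n$ to standard Brownian motion in $C[0,T]$ for every $T>0$, which is the assertion of the lemma.
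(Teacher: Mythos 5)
Your proposal is correct and follows essentially the same route as the paper: reduce from $t_n$ to $T_n$ via Lemma~\ref{lem:p2s}, obtain finite-dimensional convergence from the independent side-to-side increments of Lemma~\ref{lem:ulbd} together with Theorem~\ref{thm:maingr} and the mean/variance asymptotics of Lemma~\ref{lem:fixedG}, and prove tightness from the $p$-th moment bound of Proposition~\ref{prop:mbd}. The only (immaterial) difference is that you verify the single-increment Kolmogorov--Chentsov bound $\E|B_n(t)-B_n(s)|^p\le C(t-s)^{p/2}$ on the interpolated process, while the paper applies Billingsley's product-of-two-increments criterion with $\gl=p/4$ to the non-interpolated process; via Cauchy--Schwarz these rest on the same moment estimate.
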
  
\begin{proof}
Consider the continuous process $X'$ defined as $X'(n):=T_{n}(G)-n\nu(G)$ for $n\in\{0,1,\ldots\}$ and extended by linear interpolation. By Lemma~\ref{lem:p2s} it is enough to prove Brownian convergence for $\{Y_n(t):=(n\gs^2(G))^{-1/2} X'(n t) : {0\le t\le T}\}$ for any fixed $T>0$. 
	To prove the result it suffices to show that the finite dimensional distributions of $Y_n(t)$ converge weakly to those of $B_t$ and that $\{Y_n\}$ is tight.

First of all note that for any $s>0$, we have
\begin{align*}
	|Y_n(s)- (n\gs^2(G))^{-1/2} X(\lfloor ns \rfloor)|
	&\le (n\gs^2(G))^{-1/2} |X'(1+\lfloor ns \rfloor) - X'(\lfloor ns \rfloor) |\\
	&\le (n\gs^2(G))^{-1/2} (Z + \nu(G))\probc 0
\end{align*}                  
where $Z$ is the maximum of all the edge weights connecting $\{\lfloor ns \rfloor\}\times G$ to $\{1+\lfloor ns \rfloor\}\times G$, which has the distribution of maximum of $v(G)$ many i.i.d.~random variables each having distribution $F$. Thus it is enough to prove finite dimensional distributional convergence of the process $\{ W_n(t):=(n\gs^2(G))^{-1/2} X'(\lfloor nt \rfloor) \}_{t\ge 0}$. For a fixed $t>0$, using Theorem~\ref{thm:maingr} we have $W_n(t)\weakc N(0,t)$ since $\lfloor nt \rfloor/n\to t$. 

For $0=t_0<t_1<t_2<\cdots<t_l<\infty$, define $V_i=T_{\lfloor nt_{i-1} \rfloor,\lfloor nt_i \rfloor}(G) - (\lfloor nt_{i}\rfloor- \lfloor nt_{i-1} \rfloor)\nu(G)$ for $i=1,2,\ldots,l$. Clearly $V_i$'s are independent for all $i$. Moreover using Lemma~\ref{lem:ulbd} we have  
\begin{align*}
	\E[|W_n(t_i)-W_n(t_{i-1}) - (n\gs^2(G))^{-1/2}V_i|]\to 0
\end{align*}                                                
as $n\to\infty$ for all $i$. Thus by independence and by CLT for $(n\gs^2(G))^{-1/2}V_i$, we have
\[
   (W_n(t_i)-W_n(t_{i-1}))_{i=1}^l \weakc (B_{t_i}-B_{t_{i-1}})_{i=1}^l \text{ as } n\to\infty. 
\]

To prove tightness for $\{Y_{n}(\cdot)\}$, first of all note that certainly $\{Y_n(0)\}$ is tight as $Y_n(0)\equiv 0$. Also it is enough to prove tightness for $\{W_n(\cdot)\}$. We will prove tightness via the following lemma.
\begin{lemma}[Billingsley~\cite{bl68}, page 87-91]\label{lem:tight}
	The sequence $\{W_n\}$ is tight if there exist constants $C\ge 0$ and $\gl>1/2$ such that for all $0\le t_1<t_2<t_3$ and for all $n$, we have 
	\[
		\E[|W_n(t_2)-W_n(t_1)|^{2\gl}  |W_n(t_3)-W_n(t_2)|^{2\gl}]   
		\le C |t_2-t_1|^\gl |t_3-t_2|^\gl.
	\]
\end{lemma}
Using the Cauchy-Schwarz inequality and Proposition~\ref{prop:mbd}, it is easy to that Lemma~\ref{lem:tight} holds with $\gl=p/4$. Thus we are done.
\end{proof}

\section{CLT upto the height threshold}\label{sec:clt_ht}

In this section we prove the Central Limit Theorem all the way upto the height threshold under a few natural but unproved assumptions. A close look at the proof of Theorem \ref{thm:maingr} shows two main sources of error: (i) error coming from the gap between the upper and lower bound of the variance, and (ii) error coming from the sum of vertical edge weights needed to join the within block optimal paths. To control the errors optimally we assume the existence of fluctuation and transversal exponents. Recall that, given any direction $\vx\in\dZ^d$, $a(\mvzero,\vx)$ denotes the length of the geodesic joining $\mvzero$ and $\vx$, and $D(\mvzero,\vx)$ denotes the euclidean distance between the geodesic path and the straight line path joining $\mvzero$ and $\vx$. We assume the following:

\begin{ass}[Existence of fluctuation exponent]\label{ass:1}
There exists a number $\chi\ge 0$ such that for every $\chi'>\chi$ there exists $\ga>0$ such that 
\[
\sup_{\vx\in\dZ^d\setminus \{\mvzero\}} \E\exp\left( \ga \cdot\frac{|a(\mvzero,\vx)-\E a(\mvzero,\vx)|}{|\vx|^{\chi'}}\right)<\infty
\]
and for every $\chi''<\chi$ we have
\[
\inf_{\vx\in\dZ^d\setminus \{\mvzero\}} \frac{\var(a(\mvzero,\vx))}{|\vx|^{2\chi''}} >0.
\]
\end{ass}

\begin{ass}[Existence of transversal exponent]\label{ass:2}
There exists a number $\xi\ge 0$ such that for every $\xi'>\xi$ there exists $\ga>0$ such that 
\[
\sup_{\vx\in\dZ^d\setminus \{\mvzero\}} \E\exp\left( \ga \cdot\frac{D(\mvzero,\vx)}{|\vx|^{\xi'}}\right)<\infty
\]
and for every $\xi''<\xi$ we have
\[
\inf_{\vx\in\dZ^d\setminus \{\mvzero\}} \frac{\E( D(\mvzero,\vx))}{|\vx|^{\xi''}} >0.
\]
\end{ass}

Roughly Assumptions \ref{ass:1} and \ref{ass:2} state that $\var(a(\mvzero,\vx))\approx |\vx|^{2\chi}$ and $D(\mvzero,\vx)\approx |\vx|^{\xi}$ for  $|\vx|$ large enough. Though there exists no rigorous proof on the existence of the fluctuation and transversal exponents, it seems quite reasonable to expect that if the two exponents $\chi$ and $\xi$ indeed exist, then they should satisfy the above properties. Moreover, it is not difficult to prove (see Chatterjee~\cite{chatterjee11}) that if such exponents exist then $0\le \xi\le 1$ and $0\le \chi\le \frac{1}{2}$. We also recall that the limiting shape $B_{0}$ given by
\[
B_{0}:=\{\vx\in\dR^{d}: \nu(\vx)\le 1\}
\]
where $\nu(\vx)$, as given in \eqref{hw}, is the asymptotic speed in the direction of $\vx$. We assume that $B_{0}$ has a positive curvature in the direction of $\mve_{1}$. 

\begin{ass}[Positive curvature]\label{ass:3}
Let $H$ be the $(d-1)$-dimensional plane passing through the point $\mve_{1}$ perpendicular to the line joining $\mvzero$ and $\mve_{1}$. There exists a positive constant $C$ such that for all $\vz\in H$ we have
\[
|\nu(\mve_{1}+z)-\nu(\mve_{1})|\le C|\vz|^{2}.
\]
\end{ass}

\begin{rem}
We assume the positive curvature in the direction of $\mve_{1}$ as we are trying to prove the CLT in that direction. In general, one can prove the existence of a direction $x_{0}$ such that in the $x_{0}$ direction the positive curvature assumption holds. 
\end{rem}

Under assumption \ref{ass:1}, \ref{ass:2} and \ref{ass:3} one can prove the following result.

\begin{lem}[KPZ scaling relation~\cites{chatterjee11,am11}]
Assume \ref{ass:1}, \ref{ass:2} and \ref{ass:3}. Then we have $\chi=2\xi-1$. 
\end{lem}

\begin{rem}
If we apriori assume that the KPZ scaling relation is true, then in assumption \ref{ass:1} and \ref{ass:2}, instead of all $\vx\in\dZ^{d}\setminus\{\mvzero\}$ it is enough to consider $\vx$ in a cone in the direction of $\mve_{1}$ with angle $\eps$ for small enough $\eps>0$.
\end{rem}


Now to control the error coming from the vertical fluctuation, we use a different blocking method. Instead of dividing the length $n$ cylinder $[n]\times [-h_{n},h_{n}]^{d-1}$ into small cylinders of equal length, we use two types of cylinders.  Big cylinders are of length $\ell_{1}$ and small cylinders are of length $\ell_{2}$ where $\ell_{2}\ll \ell_{1}$. Let $m=\lfloor n/(\ell_{1}+\ell_{2})\rfloor\approx n/\ell_{1}$. Divide the rectangle $[n]\times [-h_{n},h_{n}]^{d-1}$ into $2m+1$ many sub-rectangles $R_{i}$'s, where $R_{1},R_{3},R_{5},\ldots$ are small cylinders and the rest are big cylinders. The last one is the residual cylinder, which, for simplicity we will assume, is small. 

\begin{figure}[htbf]
\centering
\includegraphics[width=140mm]{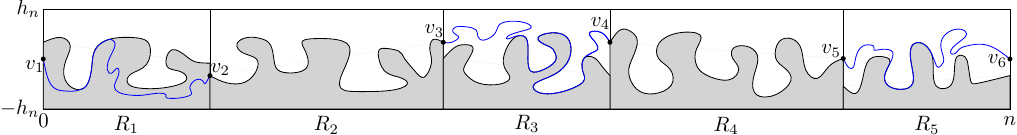}
\caption{Two type blocking }
\label{fig:2type}
\end{figure}

Let $X_{i}$ be the minimal passage time over all paths inside $R_{i}$ connecting the left and right boundaries of $R_{i}$  for $i=1,2,\ldots,2m+1$. For $i=1,2,\ldots,m$, let $v_{2i}$ and $v_{2i+1}$ be the left and right endpoint of the optimal path in $R_{2i}$ (see figure \ref{fig:2type}). Define $v_{1}=\mvzero$ and $v_{2m+2}=(n,0,\ldots,0)$. Let $Y_{i}$ be the first-passage time from $v_{i}$ to $v_{i+1}$ inside the cylinder $R_{i}$ for $i=1,2,\ldots,2m+1$. Clearly $Y_{2i}=X_{2i}$ for $i=1,2,\ldots,m$. 

Note that, $X_{1},X_{3},X_{5},\ldots$ are i.i.d.~and so is  $X_{2},X_{4},X_{6},\ldots$. Moreover they are independent of each other. On the other hand, $Y_{3},Y_{5},\ldots$ are identically distributed but not independent of each other. The main idea behind the above blocking technique, is to separate the height fluctuation and total passage-time fluctuation. While the error arising from height fluctuation will come from the small cylinders, the main contribution in the first-passage time fluctuation is coming from the big cylinders. 

As before, let $a_n(h_n)$ denote the first-passage time from $(0,0,\ldots,0)$ to $(n,0,\ldots,0)$ inside the rectangle  $[n]\times [-h_{n},h_{n}]^{d-1}$. Clearly we have 
\[
a_n(h_n)\ge X_{1}+X_{2}+X_{3}+X_{4}+\cdots+X_{2m}+X_{2m+1}.
\]
In the proof of Theorem \ref{thm:maingr} we used small rectangles of length $0$, \ie\ a vertical line. We also have,
\[
a_n(h_n)\le Y_{1}+X_{2}+Y_{3}+X_{4}+\cdots+X_{2m}+Y_{2m+1}.
\]

For the cylinder $C=[\ell]\times[-h_{n},h_{n}]^{d-1}$ we define the non-negative random variable
\[
\gD(C):=\max_{\vx\in B_{L},\vy\in B_{R}} T(\vx,\vy) -\min_{\vx\in B_{L},\vy\in B_{R}} T(\vx,\vy)
\]
where $B_{L}$ is the left boundary wall $\{\mvzero\}\times [-h_{n},h_{n}]^{d-1}$, $B_{R}$ is the right boundary wall $\{\ell\}\times [-h_{n},h_{n}]^{d-1}$ and $T(\vx,\vy)$ is the minimum passage time from $\vx$ to $\vy$ inside the cylinder $C$. Thus we have
\begin{align*}
0\le a_n(h_n)-(X_{1}+X_{2}+\cdots+X_{2m+1}) \le \sum_{i=1}^{m} (Y_{2i-1}-X_{2i-1}) \le \sum_{i=1}^{m} \gD(R_{2i-1}).
\end{align*}

In particular, as $\gD(R_{2i-1})$'s are i.i.d., we have 
\begin{align}\label{eq:kmbd}
\norm{(a_n(h_n)-\E[a_n(h_n)])- \sum_{i=1}^{2m+1}(X_{i}-\E[X_{i}])}_{k} \le 2m\cdot ||\gD(R_{1})||_{k}
\end{align}
for all $k\ge 1$ where $||X||_{k}=(\E|X^{k}|)^{1/k}$ for a random variable $X$. We prove the following lemma.

\begin{lem}\label{lem:deltabd}
Assume conditions \ref{ass:1}--\ref{ass:3}. Fix $\xi'>\xi$. Consider the cylinder $R=[n]\times [-h_{n},h_{n}]^{d-1}$ where $h_{n}=\Theta(n^{\xi'})$. Then there exists a constant $c>0$ such that  
\[
||\gD(R)||_{k} \le \frac{ckh_n^2}{n} \text{ for all $k\ge 1$.}
\]
\end{lem}

We also need a matching lower bound for the variance of $a_{n}(h_{n})$ to complete the program. Define $\gs_n^{2}(h_{n}):=\var(X_{1}+X_{2}+\cdots+X_{2m+1})=(m+1)\var(X_1)+m\var(X_2)$. From equation \eqref{eq:kmbd} it easily follows that
\begin{align}\label{eq:vsum}
|\sqrt{\var(a_{n}(h_{n}))}-\gs_{n}(h_{n})|\le 2m||\gD(R_{1})||_{2}.
\end{align}
Thus we can approximate $a_{n}(h_{n})$ by the sum $X_{1}+\cdots+X_{2m+1}$ when $m||\gD(R_{1})||_{2}\le cmh_{n}^{2}/n \ll \gs_{n}(h_{n})$. To get the appropriate lower bound for the variance we assume a natural technical condition that we are unable to prove. Let 
\begin{align}\label{xnh}
X(n,h) := &\text{ the minimum passage time from the left boundary to the }\notag\\ &\text{  right boundary inside the cylinder } [n]\times[-h,h]^{d-1}.
\end{align}

\begin{ass}\label{ass:4}
There exists $n_{0}>0$ such that for all fixed $n\ge n_{0}$ the function
\[
f(h)=\var(X(n,h))
\]
is a non-increasing function of $h$.
\end{ass}

Define 
\begin{align}\label{def:th}
\theta=\frac{1-2\chi}{\xi}\ge 0.
\end{align}
\change{Under the above four assumptions we have following moment bound. Note that the bound actually interpolates between the two cases: for $h=O(1)$ the fluctuation is of the order of $n^{1/2}$ and for $h\approx n^{\xi}$ the fluctuation is of the order of $n^{\chi}$.}
 
\begin{lem}\label{lem:clbd}
Assume conditions \ref{ass:1}, \ref{ass:2} and \ref{ass:3}. Let $X(n,h)$ be as in equation \eqref{xnh}. Then for any integer $k\ge 2$ and $\eps>0,\gd>0$, there exist constants $C>0, \xi'>\xi$ such that
\[
||X(n,h)-\E[X(n,h)]||_{k} \le C \sqrt{nh^{-\theta+\eps}}
\]
for all $n^{\gd}\ll h\ll n^{\xi'}$ where $\theta$ is as in \eqref{def:th}. 

Moreover, if we assume condition \ref{ass:4} and $\chi>0$, for every $\eps>0,\gd>0$ there exists a constant $c>0$ such that 
\[
\var(X(n,h)) \ge cnh^{-\theta-\eps}
\]
for all $h\gg n^{\gd}$. 
\end{lem}

\change{Now note that, when $\ell_{1}\approx n^{1-\gb}, \ell_{2}\approx h_{n}^{1/\xi'},\chi>0$, we have $m\approx n/\ell_{1}$, $||\gD(R_{1})||_{2}\le ch_{n}^{2}/\ell_{2}$, }
\[
m||\gD(R_{1})||_{2}\ll \frac{n}{\ell_1}\cdot h_n^{2-\frac{1}{\xi'}}\text{ and } \sqrt{nh_{n}^{-\theta-\eps}}\ll \gs_{n}(h_n).
\]
Writing $1/\xi'=1/\xi-\gd$, the sum approximation \eqref{eq:kmbd} is valid when
\begin{align*}
\frac{n}{\ell_1}\cdot h_n^{2-\frac{1}{\xi'}}\ll \sqrt{nh_{n}^{-\theta-\eps}}
\quad \text{ or }\quad  h_n^{2(2\xi-1)/\xi+\theta+\gd+\eps}\ll n^{1-2\gb}.
\end{align*}
Using the result that $\chi=2\xi-1$ we need
\[
h_n^{1/\xi+\gd+\eps}\ll n^{1-2\gb}
\]
which gives the condition $h_{n}\ll n^{\xi}$ as $\eps,\gd,\gb$ can be made arbitrarily small. When $\chi=0$, the variance lower bound is still valid but with $\theta$ replaced by $(d-1)$ (see Proposition~\ref{prop:varbd}) and we can proceed as before to get the condition $h_{n}\ll n^{1/(d-1)}$. Combining we have the following main result.

\begin{thm}\label{thm:mainht}
Assume conditions \ref{ass:1}, \ref{ass:2}, \ref{ass:3} and \ref{ass:4}. Let $\{h_n\}_{n\ge 1}$ be a sequence of integers satisfying $h_n=o(n^\ga)$ where  
\begin{align*}
	 \ga < \begin{cases}
	 \ \; \xi & \text{ if } \chi>0\\
	 \frac{1}{d-1} & \text{ if } \chi=0.
	 \end{cases}
\end{align*}     
Then we have
	\[
		\frac{a_n(h_n) - \E[a_n(h_n)]}{\sqrt{\var(a_n(h_n))}} \weakc N(0,1) \text{ as } n\to\infty.
	\]    
	Moreover, for any $\eps>0$ there exist constants $c,C>0$ such that
	\[
	 cnh_{n}^{-\theta-\eps}\le \var(a_{n}(h_{n}))\le Cnh_{n}^{-\theta+\eps}.
	\]     
\end{thm}

In dimension $2$ the conjectured values of the exponents are $\xi=1/3,\xi=2/3$. Thus the conjectured value of $\theta$ is $1/2$ which matches with the simulation results.  Moreover, for $d=2,3$, $1/(d-1)\ge 1/2$ and $\chi=0$ implies $\xi=1/2$. Thus we have the following corollary. 

\begin{cor}
Under the assumptions \ref{ass:1}, \ref{ass:2}, \ref{ass:3} and \ref{ass:4}, we have CLT for  $a_n(n^{\ga})$ in dimension $2$ and $3$ for $\ga<\xi$.  
\end{cor}

\section{Proof of CLT upto the height threshold}\label{sec:pclt}
Throughout the proof $C$ will denote a positive constant that depends only on the edge weight distribution and the dimension and may change from line to line. Let 
\begin{align}
\label{eta}
\eta(\vx):=\E[a(\mvzero,\vx)]
\end{align}
 for all $\vx\in\dZ^{d}$. Recall that $\nu(\vx)=\lim_{n\to\infty}\eta(n\vx)/n$. By subadditivity we have $\eta(\vx)\ge \nu(\vx)$ for all $\vx\in\dZ^{d}$. It turns out that under assumptions~\ref{ass:1} and \ref{ass:2}, using Alexander's argument (see~\cites{alex93, alex97}) one can prove the following result.

\begin{lem}[see Theorem $4.1$ in~\cite{chatterjee11}]\label{lem:alex}
Assume \ref{ass:1} and \ref{ass:2}. Let $\nu$ and $\eta$ be as defined in \eqref{hw} and \eqref{eta}. Then for any $\chi'>\chi$ there exists $C>0$ such that for all $\vx\in\dZ^{d}\setminus\{\vzero\}$ we have
\[
\nu(\vx)\le \eta(\vx)\le \nu(\vx) + C|\vx|^{\chi'}\log|\vx|. 
\]
\end{lem}

We will use the following result.
\begin{lem}\label{lem:km}
Let $\{X_{i}:i\in \cI\}$ be a finite collection of non-negative random variables such that $\E[\exp(\ga X_{i})]\le C$ for all $i\in \cI$ for some $\ga>0$. Then we have
\[
||\max_{i\in\cI}X_i||_k \le \frac{k}{\ga}\log(2C|\cI|) 
\]
for all $k\ge 1$. 
\end{lem}
\begin{proof}
Fix $k\ge 1$. Let $Y:=\max_{i\in\cI}X_i$ and $Z:=Y^{k-1}/||Y||_{k}^{k-1}$. Clearly we have $||Z||_{1}\le ||Z||_{k/(k-1)}=1$. Moreover we have, by concavity of the logarithm function
\begin{align*}
||Y||_{k}=\E[ZY] &\le \frac{k}{\ga}\E[Z\log(\sum_{i\in\cI}\exp(\ga X_i/k))]\\
&\le \frac{k||Z||_{1}}{\ga}\log(\sum_{i\in\cI}\E\left[\frac{Z}{||Z||_1}\exp(\ga X_i/k)\right])\\
&\le \frac{k||Z||_{1}}{\ga}\log(\sum_{i\in\cI}\frac{1}{||Z||_1}||Z||_{k/(k-1)}\E[\exp(\ga X_i)]^{1/k})\\
&\le \frac{k||Z||_{1}}{\ga}\log(\frac{|\cI|}{||Z||_1}C^{1/k})
\le \frac{1}{\ga}(k\ln(2|\cI|) + \ln C)
\end{align*}
where in the last line we used the fact that $-x\log x\le \log 2$ for all $x\in[0,1]$. 
This completes the proof.
\end{proof}

Now we are ready to prove the results in Section~\ref{sec:pclt}.

\begin{proof}[Proof of Lemma \ref{lem:deltabd}]
We want to bound the moments of the random variable 
\[
\gD:=\gD(R)= \max_{\vx\in B_{L},\vy\in B_{R}} T(\vx,\vy) -\min_{\vx\in B_{L},\vy\in B_{R}} T(\vx,\vy)
\]
where $R$ is the cylinder $[n]\times[-h,h]^{d-1}$, $B_{L}$ is the left boundary wall $\{\mvzero\}\times [-h,h]^{d-1}$, $B_{R}$ is the right boundary wall $\{n\}\times [-h,h]^{d-1}$ and $T(\vx,\vy)$ is the minimum passage time from $\vx$ to $\vy$ inside the cylinder $R$. Note that $h\approx n^{\xi'}$ for some $\xi'>\xi$. Choose $\chi'\in (\chi, 2\xi'-1)$. This is possible since $\chi=2\xi-1$. 

We define $a(\vx,\vy)$ as the unrestricted minimum passage time from $\vx$ to $\vy$. Clearly $a(\vx,\vy)\le T(\vx,\vy)$ and $\E[a(\vx,\vy)]=\nu(\vy-\vx)$. We have
\begin{align*}
0\le \gD &\le  \max_{\vx\in B_{L},\vy\in B_{R}} (T(\vx,\vy)-\nu(\vy-\vx)) +  \max_{\vx\in B_{L},\vy\in B_{R}} (\nu(\vy-\vx)-a(\vx,\vy))\\
&\qquad +  (\max_{\vx\in B_{L},\vy\in B_{R}} \nu(\vy-\vx) -  \min_{\vx\in B_{L},\vy\in B_{R}} \nu(\vy-\vx)  ).
\end{align*}
Denote the three terms appearing in the r.h.s.~by $U,V,Z$ respectively. Note that
\begin{align*}
V\le  Cn^{\chi'}\max_{\vx\in B_{L},\vy\in B_{R}} \frac{|\nu(\vy-\vx)-a(\vx,\vy)|}{|\vx-\vy|^{\chi'}}
\end{align*}
and $|B_{L}||B_{R}|\le Ch^{2(d-1)}\le C n^{C}$.
By assumption~\ref{ass:1} and Lemma~\ref{lem:km} we have 
\[
||V||_{k}\le Ckn^{\chi'}\ln n\le Cn^{2\xi'-1}.
\] 
Now to bound $Z$ we use Lemma~\ref{lem:alex}. We have
\begin{align*}
0\le Z &\le \max_{\vx\in B_{L},\vy\in B_{R}} (\eta(\vy-\vx) + C|\vy-\vx|^{\chi'}\log |\vy-\vx|) -  \min_{\vx\in B_{L},\vy\in B_{R}} \eta(\vy-\vx) \\
&\le Cn^{\chi'}\log n + 2 \max_{\vx\in B_{L},\vy\in B_{R}} |\eta(\vy-\vx)-\eta(n\mve_{1})|.
\end{align*}
Now note that for $\vx\in B_{L},\vy\in B_{R}$ we have $\vy-\vx=n\mve_{1}+z$ where $z\perp \mve_{1}$ and $|z|\le Ch_{n}$. Using assumption \ref{ass:3} we have
\begin{align*}
\max_{\vx\in B_{L},\vy\in B_{R}} |\eta(\vy-\vx)-\eta(n\mve_{1})| 
&= n \cdot \max_{\vx\in B_{L},\vy\in B_{R}} |\eta((\vy-\vx)/n)-\eta(\mve_{1})|\\
& \le Cn(h_n/n)^{2}\le Cn^{2\xi'-1}.
\end{align*}
Now to bound $U$, we divide the boundary into two sets. Define $B_{L}^{o}$ as the boundary part $\{\mvzero\}\times \{z\in\dZ^{d-1}:|z|\le h_{n}/2\}$ and $B_{L}^{b}$ as the boundary part $\{\mvzero\}\times \{z\in\dZ^{d-1}: h_{n}/2<|z|\le h_{n}\}$.  Similarly we define $B_{R}^{o}, B_{R}^{b}$ (o is for center and b is for border). We also define the event 
\[
E(\vx,\vy):=\text{ the unconstrained geodesic from $\vx$ to $\vy$ lies within the cylinder $R$}.
\]
Using Assumption \ref{ass:2} one can easily see that for $\vx\in B_{L}^{o},\vy\in B_{R}^{o}$ we have $\pr(E(\vx,\vy)^{c})\le \exp(-n^{\eps})$ for some $\eps>0$. Thus we have 
\[
\max_{\vx\in B_{L}^{o},\vy\in B_{R}^{o}} (T(\vx,\vy)-\nu(\vy-\vx)) \le \max_{\vx\in B_{L}^{o},\vy\in B_{R}^{o}} (a(\vx,\vy)-\nu(\vy-\vx) + 2n\cdot\ind\{E(\vx,\vy)^{c}\})
\]
and its $k$-th norm is bounded by $Ckn^{\chi'}\log n$.
When either $\vx\in B_{L}^{b}$ or $\vy\in B_{R}^{b}$, we consider the nearest boundary point of $[n/3,2n/3]\times [-h_{n}/2,h_{n}/2]^{d-1}$ to $\vx$ or $\vy$. Call them $\vx'$ and $\vy'$ respectively (if $\vx\in B_{L}^{o}$ we will take $\vx'=\vx$ and similar for $\vy$). Clearly $T(\vx,\vy)\le T(\vx,\vx') + T(\vx',\vy') + T(\vy',\vy)$. As before $T(\vx',\vy')$ will equal $a(\vx',\vy')$ with high probability. Also note that $0\le \nu(\vx'-\vx) + \nu(\vy'-\vx') + \nu(\vy-\vy')-\nu(\vy-\vx)\le Cn^{2\xi'-1}$ for all such $\vx,\vy$. Thus we need to bound the $k$-th norm of $\max_{\vx,\vx'}(T(\vx,\vx')-\nu(\vx'-\vx))$. But considering the diagonal direction (for which Assumption \ref{ass:1} and \ref{ass:2} are also valid) and using the event that the unrestricted geodesic stays within the corresponding cylinder of radius $|\vx'-\vx|^{\xi'}$ and Using Lemma~\ref{lem:km} we get the bound that $||U||_{k}\le Cn^{2\xi'-1}$. Combining everything we have
\[
||\gD(R)||_{k}\le Cn^{2\xi'-1}=Ch^{2}/n.
\]
\end{proof}
\begin{proof}[Proof of Lemma \ref{lem:clbd}]
We will first prove the variance upper bound. Under assumption \ref{ass:1} and \ref{ass:3} one can easily check that for any $\xi'>\xi$ and $\chi'>\chi>\chi''$, there exist constants $c,C>0$ such that
\[
cn^{2\chi''}\le \var(a_{n}(n^{\xi'})) \le Cn^{2\chi'}.
\]
Combining with  Lemma~\ref{lem:deltabd} we get 
\begin{align}\label{eq:vbd2}
cn^{2\chi''}\le \var(X(n,n^{\xi'})) \le Cn^{2\chi'}.
\end{align}
Fix $\eps\in (0,(2\xi)^{-1})$. Define $\xi'>\xi$ such that $1/\xi'=1/\xi-\eps$. Moreover define $\chi'$ such that
\begin{align}\label{eq:cx}
\theta':=\frac{1-2\chi'}{\xi'}=\theta-4\eps=\frac{1-2\chi}{\xi}-4\eps.
\end{align}
Note that $\chi'>\chi$ as \eqref{eq:cx} implies that $(2\chi'-1)(1-\eps\xi)=(2\chi-1)+4\eps\xi$ or $2(\chi'-\chi)(1-\eps\xi)=\eps\xi(2\chi+3)>0$. For simplicity we will always take $h_n$ of the form $n^{\gc}$ for some $\gc\in (0,\xi']$. Define $\gc_{1}=\xi'$. From \eqref{eq:vbd2} we have
\begin{align}
\var(X(n,n^{\gc_1}))\le Cn(n^{\gc_1})^{-\theta'}
\end{align}
for large enough $n$.

We will use an induction argument to prove the upper bound. Suppose that for some $\gc>0$ we have
\begin{align}\label{eq:ass1}
\var(X(n,n^{\gc}))\le Cn(n^{\gc})^{-\theta'}.
\end{align}
for all $n$ large enough. We consider the cylinder $[n]\times[-n^{\gc'},n^{\gc'}]$ where $\gc'<\gc$ and divide it into consecutive big and small cylinders of length $\ell_{1}:=n^{\gc'/\gc}$ and $\ell_{2}:=n^{\gc'/\xi'}$ respectively. Number of such cylinders will be $m\approx n^{1-\gc'/\gc}$. Using \eqref{eq:vsum} and Lemma~\ref{lem:deltabd} we have
\begin{align}
\sqrt{\var(X(n,n^{\gc'}))}
&\le C\sqrt{m\var(X(n^{\gc'/\gc},n^{\gc'}))+ m\var(X(n^{\gc'/\xi'},n^{\gc'}))} + Cm\cdot n^{(2-1/\xi')\gc'}\notag\\
&\le C\sqrt{n^{1-\theta'\gc'}} + n^{1-\gc'/\gc + (2-1/\xi')\gc'}\label{eq:vrec}
\end{align}
where in the last line we have used  \eqref{eq:ass1}. Thus the variance upper bound
\[
{\var(X(n,n^{\gc'}))} \le C'n(n^{\gc'})^{-\theta'}
\]
will hold (with a different constant $C'$) if we have
\begin{align}\label{eq:gcond}
2(1-\gc'/\gc + (2-1/\xi')\gc') &\le 1-\theta'\gc'
\text{ or } 1/\gc' - 2/\gc \le - (\theta'+4-2/\xi').
\end{align}
Define $\gl:=\theta'+4-2/\xi'$.
Putting the values of $\theta',\xi'$ and using the fact that $\chi=2\xi-1$ we have 
\[
\gl= (1-2\chi)/\xi - 4\eps +4 - 2/\xi + 2\eps = 1/\xi-2\eps<1/\xi'. 
\]
Thus if 
\[
\frac{1}{\gc'}-\gl\le 2\left(\frac{1}{\gc}-\gl\right)
\]
and the variance upper bound holds for $\gc$, then the variance upper bound also holds for $\gc'$. Now starting with $\gc_{1}=\xi'$ for which the variance upper bound holds, we can see that the upper bound holds for $\gc$ (with a constant $C$ depending on $t$) if 
\[
\frac{1}{\gc}-\gl\le 2^{t}\left(\frac{1}{\xi'}-\gl\right)= 2^{t} \eps
\]
for some positive integer $t\ge 1$. By taking $t$ large we have the result. 
 
 To prove the upper bound for $k$-th central moment, we use the following result from Lata\l a~\cite{latala97}.
 
\begin{lem}[Theorem $2$ in Lata\l a~\cite{latala97}]\label{lem:latala}
If $k\ge 1$ and $X_{1},X_{2},\ldots$ are i.i.d.~mean zero random variables then we have
\[
||X_{1}+\cdots+X_{n}||_{k} \sim \sup\left\{\frac{k}{s}\left( \frac{n}{k}\right)^{1/s}||X_{1}||_{s}: \max\{2,k/n\}\le s\le k\right\}.
\]
\end{lem}

Using assumption \ref{ass:1}, \ref{ass:3} and Lemma~\ref{lem:deltabd} one can easily check that for any $k\ge 2, \xi'>\xi$ and $\chi'>\chi>\chi''$, there exist constants $c,C>0$ such that
\[
cn^{\xi''}\le ||X(n,n^{\xi'})-\E[X(n,n^{\xi'})]||_{k} \le Cn^{\xi'}.
\]

We use induction over $k$. For $k=2$ the $k$-th moment upper bound is true. Now note that if all the moments $||X_{1}||_{i}$ for $2\le i\le k$ are upper bounded by $Cn^{\chi'}$, then from Lemma~\ref{lem:latala} we have
\[
||X_{1}+\cdots+X_{\ell}||_{k} \le C_{k}\ell^{1/2}n^{\chi'}.
\]
Moreover, from equation \eqref{eq:kmbd}, for the $k$-th central moment (similar to \eqref{eq:vrec}) we have 
\[
||X(n,n^{\gc'})-\E[X(n,n^{\gc'})]||_{k} \le C||X_{1}+\cdot+X_{m}||_{k} + Cm\cdot n^{(2-1/\xi')\gc'}
\]
where $X_{i}\equald X(n^{\gc'/\gc},n^{\gc'})-E[X(n^{\gc'/\gc},n^{\gc'})]$ are i.i.d., and this is sufficient to run the induction. We leave the proof details, which is similar to the variance upper bound,  to the interested reader. 

Now we move on to the proof of the variance lower bound under the assumption \ref{ass:4} and $\chi>0$. Here also we will take $h_n$ of the form $n^{\gc}$ for some $\gc\in (0,\xi']$. Suppose for some $\gc_{0}<\xi$ the variance lower bound does not hold for $h_{n}=n^{\gc_{0}}$ so that there exists $\eps'>0$ such that 
\begin{align}\label{eq:lbd1}
\var(X(n,n^{\gc_{0}})) \le  cn(h_{n})^{-\theta-\eps'}
\end{align}
for an increasing sequence of $n$. We will use the same idea used in the variance upper bound. But instead of estimating the variance of thin cylinder from thick cylinders, here we are estimating the variance of the thick cylinder from thin cylinders. If \eqref{eq:lbd1} holds for $\gc'$ instead of $\gc_{0}$, using the same idea used in the variance upper bound, we can easily see that it will hold for $\gc>\gc'$ if 
\begin{align*}
1/\gc' - 2/\gc &< - (\theta +\eps' +4-2/\xi')=-(1/\xi +\eps' +2\eps)\\
\text{ or } 1/\gc -\gl &> 1/2\cdot (1/\gc'-\gl)
\end{align*}
where $\gl=1/\xi +\eps' +2\eps$. If $\gc_{0}<1/\gl$ using finitely many steps we can show that \eqref{eq:lbd1} holds for $\gc$ smaller than but arbitrary close to $1/\gl$. Now for $\gc$ very close to $1/\gl$ or bigger than that, the variance upper bound becomes
\[
\var(X(n,n^{\gc})) \le  cn^{1-\gc(\theta+\eps')}.
\]
Now note that for $\gc=1/\gl$ we have 
\[
1-\gc(\theta+\eps') = 1-\frac{\xi}{1+(\eps'+2\eps)\xi}\left(\frac{1-2\chi}{\xi}+\eps'\right) = \frac{2\eps\xi + 2\chi}{1+(\eps'+2\eps)\xi}
\]
which is strictly smaller than $2\chi$ for $\eps$ small enough. Thus in finitely many steps we get a variance upper bound 
\[
\var(X(n,n^{\gc})) \le  cn^{2\chi'}
\]
where $\gc<\xi$ and $\chi'<\chi$. But under Assumption~\ref{ass:4} we have 
\[
\var(X(n,n^{\xi'})) \le \var(X(n,n^{\gc})) \le  cn^{2\chi'}
\]
for all $\xi'\ge \xi$ and for large enough $n$. This gives a contradiction to \eqref{eq:vbd2} and we are done. 
\end{proof}

\begin{proof}[Proof of Theorem \ref{thm:mainht}]
We will use the same notations as in Section \ref{sec:clt_ht}. To prove the CLT we use the two type blocking with the big blocks having length $\ell_{1}\approx n^{1-\gb}$ and small blocks having lengths $\ell_{2}\approx h_{n}^{1/\xi'}$ where $\xi'>\xi$ is fixed. Number of such cylinders is $2m+1\approx n^{\gb}$. In the proof $\gb>0$ will be very small but fixed. From \eqref{eq:kmbd} we have
\begin{align}
\norm{(a_n(h_n)-\E[a_n(h_n)])- \sum_{i=1}^{2m+1}(X_{i}-\E[X_{i}])}_{2} \le 2m\cdot ||\gD(R)||_{2}
\end{align}
where $R$ is the cylinder $[\ell_{2}]\times[-h_{n},h_{n}]^{d-1}$. Define 
\[
\gs_{n}^{2}(h_{n}):= \var\left(\sum_{i=1}^{2m+1}X_{i}\right) = (m+1)\var(T(\ell_{1},h_{n}))+m\var(T(\ell_{2},h_{n})).
\]
From equation \eqref{eq:vsum} we have
\begin{align}
|\sqrt{\var(a_{n}(h_{n}))}-\gs_{n}(h_{n})|\le 2m||\gD(R)||_{2}
\end{align}
Moreover, from Lemma~\ref{lem:deltabd} we have 
\[
||\gD(R)||_{2} \le \frac{Ch_n^2}{\ell_{2}} \le Ch_{n}^{2-1/\xi'}.
\]
If we can show that 
\begin{align}\label{eq:2show}
m||\gD(R)||_{2} \ll \gs_{n}(h_{n})
\end{align}
we will have
\[
\norm{\frac{a_n(h_n)-\E[a_n(h_n)]}{\sqrt{\var(a_{n}(h_n))}}- \frac{\sum_{i=1}^{2m+1}(X_{i}-\E[X_{i}])}{\gs_{n}(h_n)}}_{2} \goesto 0
\]
as $n\to\infty$. 

Now when $\chi>0$ using the variance lower bound from Lemma~\ref{lem:clbd} and choosing $\gb,\xi'-\xi$ sufficiently small, one can show that \ref{eq:2show} holds for $h_{n}=o(n^{\ga})$ with $\ga<\xi$ (see the discussion before Theorem~\ref{thm:mainht}). When $\chi=0$, using the variance lower bound from Lemma~\ref{lem:lbd}
\[
\var(X(n,h)) \ge cnh^{-(d-1)}
\]
and the fact that $\xi=1/2$, it follows that \eqref{eq:2show} holds for $h_{n}=o(n^{\ga})$ with $\ga<1/(d-1)$.

The rest of the proof of CLT can be completed using Lyapounov's condition and  the same recursion idea used in the proof of Theorem~\ref{thm:maingr}. However, when $\chi>0$, it is possible to prove the CLT for $\sum_{i=1}^{2m+1}X_{i}$ directly using the $k$-th central moment bound from Lemma~\ref{lem:clbd} as in that case for $k>1$ we have
\[
\frac{(m+1)||X_{1}-\E[X_{1}]||_{2k}^{2k}+m||X_{2}-\E[X_{2}]||_{2k}^{2k}}{((m+1)\var(X_{1})+m\var(X_{2}))^{k}} \le \frac{Cm (nh_{n}^{-\theta+\eps}/m)^k}{(nh_{n}^{-\theta-\eps})^k}\to 0
\] 
as $n\to\infty$ for $\eps$ small enough. 
\end{proof}


\section{Numerical results}\label{sec:num}
In this section we report on some numerical simulation results which support Conjecture~\ref{conj:onethird} and \ref{conj:varbd}. We consider two-dimensional rectangles $\{0,1,\ldots,n\}\times \{-h_n,\ldots, h_{n}\}$ with $h_{n}=n^{\alpha}$ for $h_n$ ranging between $30$ to $60$ and $\alpha$ ranging within the set $\{2/3, 1/2, 2/5$, $1/3\}$.  For the edge weight distribution we take Bernoulli$(p)$ for different values of $p$. For each configuration we simulate $1000$ observations for $a_{n}(h_{n})$ to estimate the variance and use $1000$ estimates for the variance per configuration to estimate the parameters.  

We assume that there are two constants $\beta,\gc >0$  depending only on the distribution of edge weights such that
\[
\var(a_{n}(h_{n}))\approx \beta n h_{n}^{-\gc}
\] 
for $h_{n}\le n^{2/3}$. Note that we have the rigorous result that $\gc\in [0,1]$ if it exists. However it is not clear how to define the approximation properly. Our conjecture is that $\gc$ exists in some appropriate sense (for example the ratio of the logarithms of both sides are bounded) and satisfies the following:

\begin{conj}
In two dimension, we have 
\[\gc=1/2\]
 when $h_{n}=\Theta(n^{\ga})$ and $\ga\le 2/3$.
\end{conj}

To estimate the numbers $\gb,\gc$  we use the simple  linear regression model 
$$
\log \var(a_n(h_n)) = \log\beta + \log n -\gc \log(h_n) +\text{ Gaussian error}
$$
and least square estimates. 
In figure~\ref{fig:gamma} the estimated values of $\gc$ are plotted against $p$ for different values  $\ga$, which shows that $\gc$ is close to $1/2$ for all values of $p$.

\begin{figure}[htbf]
\begin{center}
   \includegraphics[height=2.2in,width=4.5in]{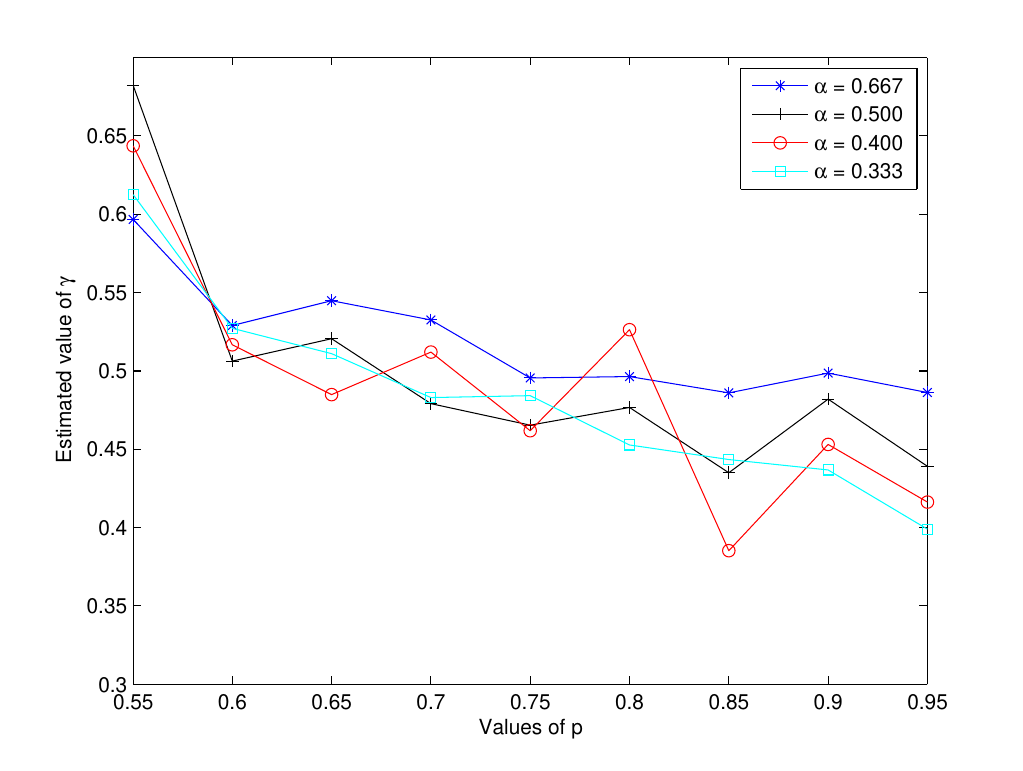}\\ 
\end{center}
   \caption{Plot of estimated values of  $\gc$ vs. $p$ for different values of $\ga$.}
   \label{fig:gamma}
\end{figure}

Figure~\ref{fig:qq} shows QQ plots based on the above simulation data for $a_{n}(h_{n})$ for $n=h_{n}^{2}=55$ against an appropriately fitted normal distribution, supporting the conjecture of asymptotic  normality. 

\begin{figure}[htbf] 
\begin{minipage}{1\columnwidth}
\begin{center}
   \includegraphics[width=1.75in]{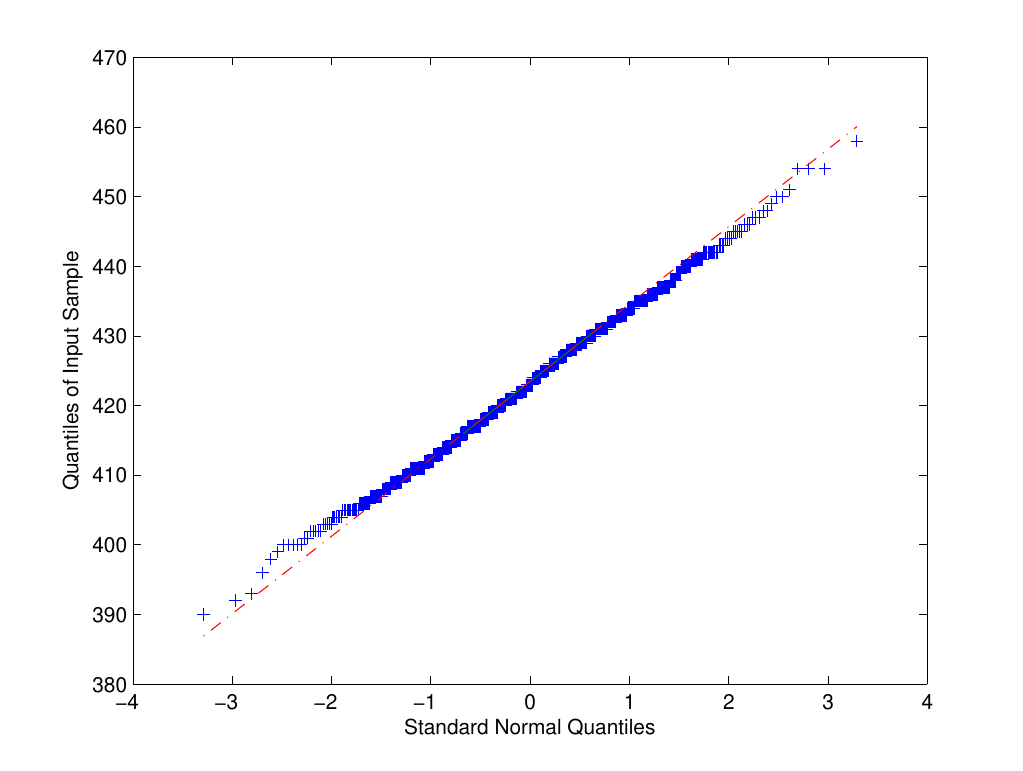}
   \includegraphics[width=1.75in]{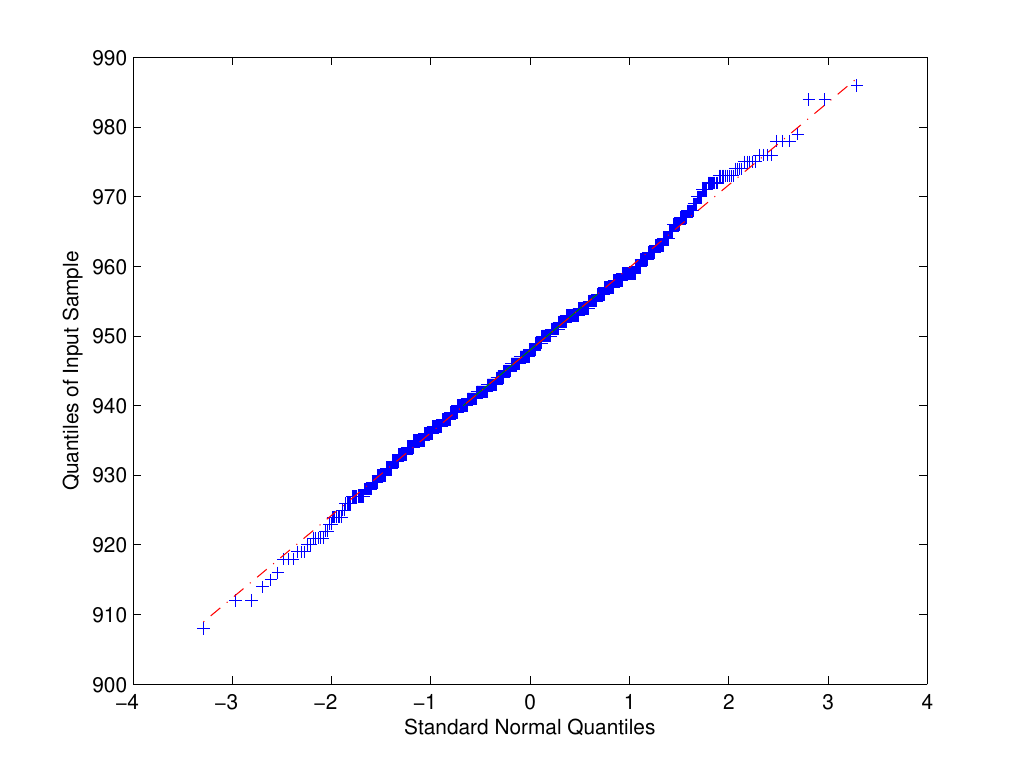}
\end{center}
\end{minipage}

\begin{minipage}{1\columnwidth}
\begin{center}
  \includegraphics[width=1.75in]{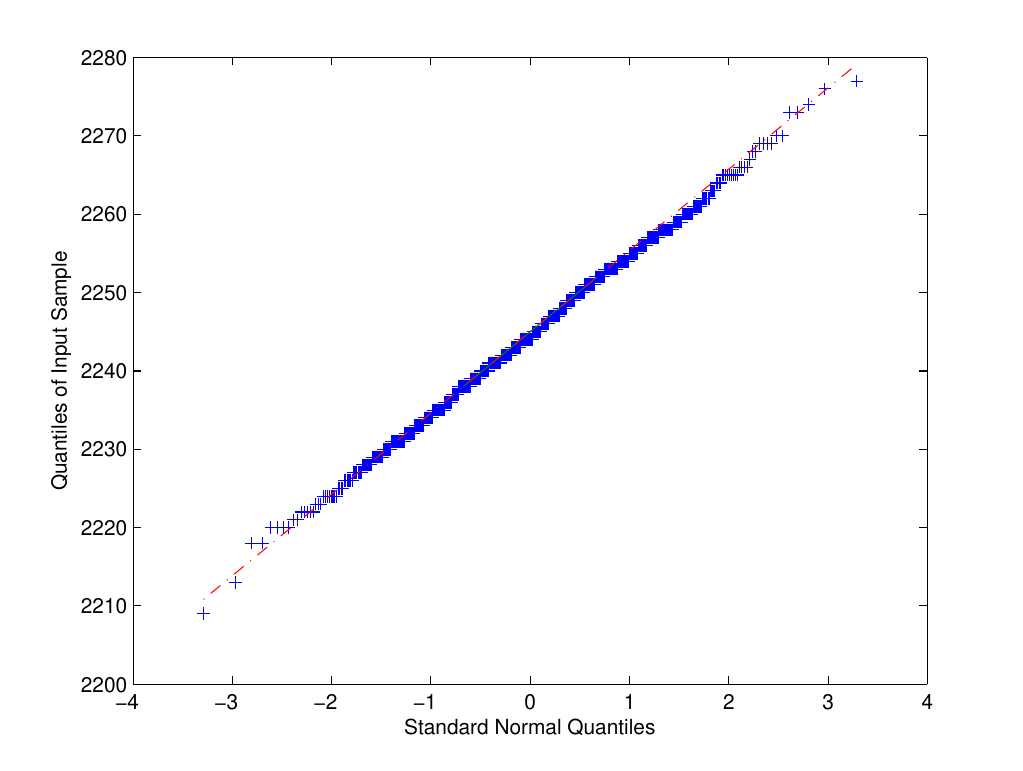}
   \includegraphics[width=1.75in]{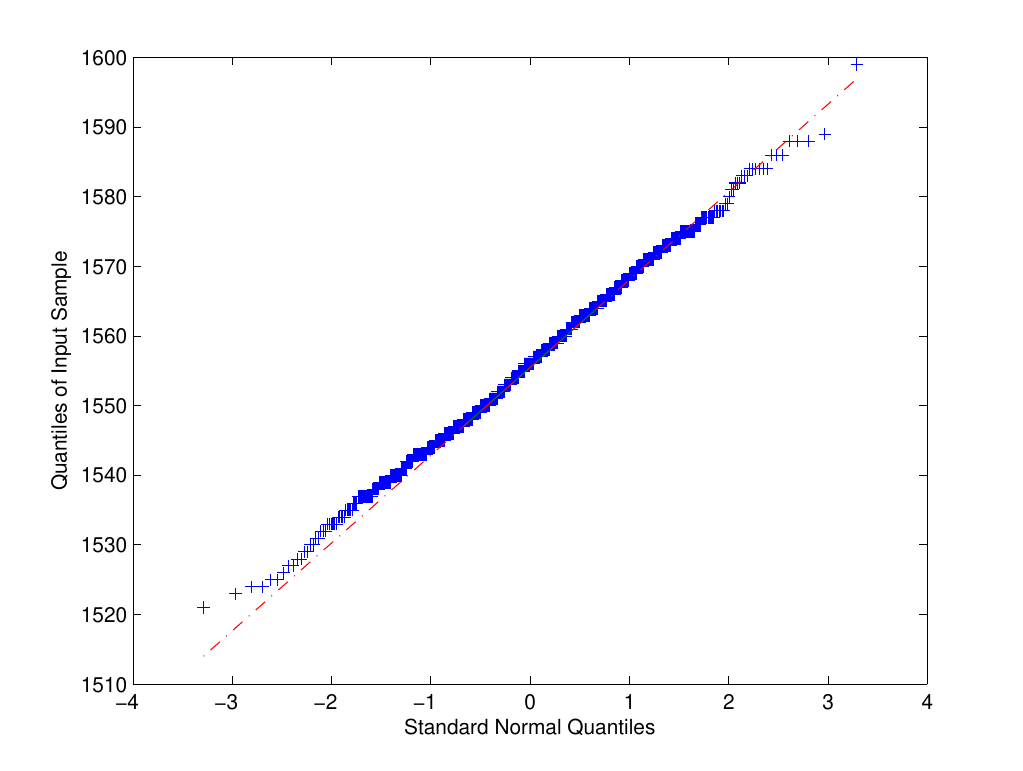}
\end{center}
\end{minipage}
   \caption[QQ plots based on simulation data for $a_{n}(n^{1/2})$ for $n=3000$]{QQ plots based on simulation data for $a_n(n^{1/2})$ for $n=3000$ for Bernoulli$(p)$ edge weights, $p=0.6,0.7,0.8,0.9$ in clockwise direction starting from top left.}
   \label{fig:qq}
\end{figure}

\section{Acknowledgments} 
\label{sec:acknowledgments}
 	The authors would like to thank Itai Benjamini for initiating the investigation by suggesting that a CLT may hold for cylinders with fixed diameter. They are thankful to Antonio Auffinger and Oren Louidor for helping with the computer simulation and to the anonymous referee for several helpful comments that improved the presentation of the paper. 
         
\begin{bibdiv}
\begin{biblist}

\bib{alex93}{article}{
      author={Alexander, Kenneth~S.},
       title={A note on some rates of convergence in first-passage
  percolation},
        date={1993},
        ISSN={1050-5164},
     journal={Ann. Appl. Probab.},
      volume={3},
      number={1},
       pages={81\ndash 90},
  url={http://links.jstor.org/sici?sici=1050-5164(199302)3:1<81:ANOSRO>2.0.CO;2-D&origin=MSN},
}

\bib{alex97}{article}{
      author={Alexander, Kenneth~S.},
       title={Approximation of subadditive functions and convergence rates in
  limiting-shape results},
        date={1997},
        ISSN={0091-1798},
     journal={Ann. Probab.},
      volume={25},
      number={1},
       pages={30\ndash 55},
         url={http://dx.doi.org/10.1214/aop/1024404277},
      review={\MR{MR1428498 (98f:60203)}},
}

\bib{am11}{article}{
      author={Auffinger, Antonio},
      author={Damron, Michael},
       title={A simplified proof of the relation between scaling exponents in
  first-passage percolation},
        date={2011},
      eprint={http://arxiv.org/abs/1109.0523},
}

\bib{bs05}{article}{
      author={Baik, Jinho},
      author={Suidan, Toufic~M.},
       title={A {GUE} central limit theorem and universality of directed first
  and last passage site percolation},
        date={2005},
        ISSN={1073-7928},
     journal={Int. Math. Res. Not.},
      number={6},
       pages={325\ndash 337},
      review={\MR{MR2131383 (2006c:60025)}},
}

\bib{bks03}{article}{
      author={Benjamini, Itai},
      author={Kalai, Gil},
      author={Schramm, Oded},
       title={First passage percolation has sublinear distance variance},
        date={2003},
        ISSN={0091-1798},
     journal={Ann. Probab.},
      volume={31},
      number={4},
       pages={1970\ndash 1978},
      review={\MR{MR2016607 (2005b:60251)}},
}

\bib{bl68}{book}{
      author={Billingsley, Patrick},
       title={Convergence of probability measures},
   publisher={John Wiley \& Sons Inc.},
     address={New York},
        date={1968},
      review={\MR{MR0233396 (38 \#1718)}},
}

\bib{bm05}{article}{
      author={Bodineau, Thierry},
      author={Martin, James},
       title={A universality property for last-passage percolation paths close
  to the axis},
        date={2005},
        ISSN={1083-589X},
     journal={Electron. Comm. Probab.},
      volume={10},
       pages={105\ndash 112 (electronic)},
      review={\MR{MR2150699 (2006a:60189)}},
}

\bib{bblm05}{article}{
      author={Boucheron, St{\'e}phane},
      author={Bousquet, Olivier},
      author={Lugosi, G{\'a}bor},
      author={Massart, Pascal},
       title={Moment inequalities for functions of independent random
  variables},
        date={2005},
        ISSN={0091-1798},
     journal={Ann. Probab.},
      volume={33},
      number={2},
       pages={514\ndash 560},
      review={\MR{MR2123200 (2006a:60024)}},
}

\bib{chatterjee11}{article}{
      author={Chatterjee, Sourav},
       title={The universal relation between scaling exponents in first-passage
  percolation},
        date={2011},
      eprint={http://arxiv.org/abs/1105.4566},
}

\bib{ccd86}{article}{
      author={Chayes, J.~T.},
      author={Chayes, L.},
      author={Durrett, R.},
       title={Critical behavior of the two-dimensional first passage time},
        date={1986},
        ISSN={0022-4715},
     journal={J. Statist. Phys.},
      volume={45},
      number={5-6},
       pages={933\ndash 951},
      review={\MR{MR881316 (88f:60175)}},
}

\bib{cd81}{article}{
      author={Cox, J.~Theodore},
      author={Durrett, Richard},
       title={Some limit theorems for percolation processes with necessary and
  sufficient conditions},
        date={1981},
        ISSN={0091-1798},
     journal={Ann. Probab.},
      volume={9},
      number={4},
       pages={583\ndash 603},
      review={\MR{MR624685 (82k:60208)}},
}

\bib{gtw01}{article}{
      author={Gravner, Janko},
      author={Tracy, Craig~A.},
      author={Widom, Harold},
       title={Limit theorems for height fluctuations in a class of discrete
  space and time growth models},
        date={2001},
        ISSN={0022-4715},
     journal={J. Statist. Phys.},
      volume={102},
      number={5-6},
       pages={1085\ndash 1132},
      review={\MR{MR1830441 (2002d:82065)}},
}

\bib{gk84}{article}{
      author={Grimmett, Geoffrey},
      author={Kesten, Harry},
       title={First-passage percolation, network flows and electrical
  resistances},
        date={1984},
        ISSN={0044-3719},
     journal={Z. Wahrsch. Verw. Gebiete},
      volume={66},
      number={3},
       pages={335\ndash 366},
      review={\MR{MR751574 (86d:60128)}},
}

\bib{hw65}{incollection}{
      author={Hammersley, J.~M.},
      author={Welsh, D. J.~A.},
       title={First-passage percolation, subadditive processes, stochastic
  networks, and generalized renewal theory},
        date={1965},
   booktitle={Proc. {I}nternat. {R}es. {S}emin., {S}tatist. {L}ab., {U}niv.
  {C}alifornia, {B}erkeley, {C}alif},
   publisher={Springer-Verlag},
     address={New York},
       pages={61\ndash 110},
      review={\MR{MR0198576 (33 \#6731)}},
}

\bib{kj00}{article}{
      author={Johansson, Kurt},
       title={Shape fluctuations and random matrices},
        date={2000},
        ISSN={0010-3616},
     journal={Comm. Math. Phys.},
      volume={209},
      number={2},
       pages={437\ndash 476},
      review={\MR{MR1737991 (2001h:60177)}},
}

\bib{kj01}{article}{
      author={Johansson, Kurt},
       title={Discrete orthogonal polynomial ensembles and the {P}lancherel
  measure},
        date={2001},
        ISSN={0003-486X},
     journal={Ann. of Math. (2)},
      volume={153},
      number={1},
       pages={259\ndash 296},
      review={\MR{MR1826414 (2002g:05188)}},
}

\bib{kpz86}{article}{
      author={Kardar, M.},
      author={Parisi, G.},
      author={Zhang, Y.C.},
       title={{Dynamic scaling of growing interfaces}},
        date={1986},
     journal={Physical Review Letters},
      volume={56},
      number={9},
       pages={889\ndash 892},
}

\bib{kes86}{incollection}{
      author={Kesten, Harry},
       title={Aspects of first passage percolation},
        date={1986},
   booktitle={\'{E}cole d'\'et\'e de probabilit\'es de {S}aint-{F}lour,
  {XIV}---1984},
      series={Lecture Notes in Math.},
      volume={1180},
   publisher={Springer},
     address={Berlin},
       pages={125\ndash 264},
      review={\MR{MR876084 (88h:60201)}},
}

\bib{kes93}{article}{
      author={Kesten, Harry},
       title={On the speed of convergence in first-passage percolation},
        date={1993},
        ISSN={1050-5164},
     journal={Ann. Appl. Probab.},
      volume={3},
      number={2},
       pages={296\ndash 338},
      review={\MR{MR1221154 (94m:60205)}},
}

\bib{nz97}{article}{
      author={Kesten, Harry},
      author={Zhang, Yu},
       title={A central limit theorem for ``critical'' first-passage
  percolation in two dimensions},
        date={1997},
        ISSN={0178-8051},
     journal={Probab. Theory Related Fields},
      volume={107},
      number={2},
       pages={137\ndash 160},
         url={http://dx.doi.org/10.1007/s004400050080},
      review={\MR{MR1431216 (97m:60151)}},
}

\bib{ks91}{article}{
      author={Krug, J.},
      author={Spohn, H.},
       title={{Kinetic roughening of growing surfaces}},
        date={1991},
     journal={Solids far from equilibrium},
       pages={479\ndash 582},
}

\bib{latala97}{article}{
      author={Lata{\l}a, Rafa{\l}},
       title={Estimation of moments of sums of independent real random
  variables},
        date={1997},
        ISSN={0091-1798},
     journal={Ann. Probab.},
      volume={25},
      number={3},
       pages={1502\ndash 1513},
         url={http://dx.doi.org/10.1214/aop/1024404522},
}

\bib{lnp96}{article}{
      author={Licea, C.},
      author={Newman, C.~M.},
      author={Piza, M. S.~T.},
       title={Superdiffusivity in first-passage percolation},
        date={1996},
        ISSN={0178-8051},
     journal={Probab. Theory Related Fields},
      volume={106},
      number={4},
       pages={559\ndash 591},
         url={http://dx.doi.org/10.1007/s004400050075},
      review={\MR{MR1421992 (98a:60151)}},
}

\bib{np95}{article}{
      author={Newman, Charles~M.},
      author={Piza, Marcelo S.~T.},
       title={Divergence of shape fluctuations in two dimensions},
        date={1995},
        ISSN={0091-1798},
     journal={Ann. Probab.},
      volume={23},
      number={3},
       pages={977\ndash 1005},
      review={\MR{MR1349159 (96g:82052)}},
}

\bib{pp94}{article}{
      author={Peres, Yuval},
      author={Pemantle, Robin},
       title={{Planar first-passage percolation times are not tight}},
        date={1994},
     journal={Probability and phase transition (G. Grimmett, ed.)},
       pages={261\ndash 264},
}

\bib{rson73}{article}{
      author={Richardson, Daniel},
       title={Random growth in a tessellation},
        date={1973},
     journal={Proc. Cambridge Philos. Soc.},
      volume={74},
       pages={515\ndash 528},
      review={\MR{MR0329079 (48 \#7421)}},
}

\bib{rose70}{article}{
      author={Rosenthal, Haskell~P.},
       title={On the subspaces of {$L^{p}$} {$(p>2)$} spanned by sequences of
  independent random variables},
        date={1970},
        ISSN={0021-2172},
     journal={Israel J. Math.},
      volume={8},
       pages={273\ndash 303},
      review={\MR{MR0271721 (42 \#6602)}},
}

\bib{sw78}{book}{
      author={Smythe, R.~T.},
      author={Wierman, John~C.},
       title={First-passage percolation on the square lattice},
      series={Lecture Notes in Mathematics},
   publisher={Springer},
     address={Berlin},
        date={1978},
      volume={671},
        ISBN={3-540-08928-4},
      review={\MR{MR513421 (80a:60135)}},
}

\bib{suidan06}{article}{
      author={Suidan, Toufic},
       title={A remark on a theorem of {C}hatterjee and last passage
  percolation},
        date={2006},
        ISSN={1751-8113},
     journal={J. Phys. A},
      volume={39},
      number={28},
       pages={8977\ndash 8981},
      review={\MR{MR2240468 (2007d:82040)}},
}

\bib{tala95}{article}{
      author={Talagrand, Michel},
       title={Concentration of measure and isoperimetric inequalities in
  product spaces},
        date={1995},
        ISSN={0073-8301},
     journal={Inst. Hautes \'Etudes Sci. Publ. Math.},
      number={81},
       pages={73\ndash 205},
      review={\MR{MR1361756 (97h:60016)}},
}

\bib{zhang08}{article}{
      author={Zhang, Yu},
       title={Shape fluctuations are different in different directions},
        date={2008},
        ISSN={0091-1798},
     journal={Ann. Probab.},
      volume={36},
      number={1},
       pages={331\ndash 362},
         url={http://dx.doi.org/10.1214/009117907000000213},
      review={\MR{MR2370607 (2009c:60273)}},
}
\end{biblist}
\end{bibdiv}

\end{document}